\newcommand{\C}{\mathbb{C}} 
\newcommand{\N}{\mathbb{N}}
\newcommand{\Z}{{\mathbb Z}}
\newcommand{\Q}{{\mathbb Q}}
\renewcommand{\phi}{\varphi}
\theoremstyle{plain}
    \newtheorem{theorem}{Theorem}[section]
    \newtheorem{lemma}[theorem]{Lemma}
    \newtheorem{corollary}[theorem]{Corollary}
    \newtheorem{proposition}[theorem]{Proposition}
        \newtheorem*{proposition*}{Proposition}
    \newtheorem{conjecture}[theorem]{Conjecture}
\theoremstyle{definition}
    \newtheorem{definition}[theorem]{Definition}
    \newtheorem{example}[theorem]{Example}
    \newtheorem{remark}[theorem]{Remark}
    \newtheorem{remarks}[theorem]{Remarks}
    \newtheorem{examples}[theorem]{Examples}
\theoremstyle{remark}
\newcommand{\dcc}{\mathcal{D}_{cch}^+}
\newcommand{\dch}{\mathcal{D}_{ch}^+}
\newcommand{\Manoa}{M\=anoa}
\newcommand{\Hawaii}{Hawai\kern.05em`\kern.05em\relax i}
\begin{document}

\title[The rational HK-conjecture]{The rational HK-conjecture: transformation groupoids and a revised version}
\author{Robin J. Deeley}
\address{Robin J. Deeley,   Department of Mathematics,
University of Colorado Boulder
Campus Box 395,
Boulder, CO 80309-0395, USA }
\email{robin.deeley@colorado.edu}
\author{Rufus Willett}
\address{Rufus Willett, Department of Mathematics,
University of Hawaii Manoa
2565 McCarthy Mall, Keller 401A
Honolulu, HI 96822, USA}
\email{rufus@math.hawaii.edu}
\subjclass[2010]{46L80, 22A22}
\keywords{The HK-conjecture, groupoids, $K$-theory, homology}
\thanks{RJD was partially supported by NSF Grants DMS 2000057 and 2247424 and Simons Foundation Gift MP-TSM-00002896. RW was partially supported by NSF Grant DMS 2247968 and Simons Foundation Gift MP-TSM-00002363}

\begin{abstract}
We prove the rational HK-conjecture for a large class of transformation groupoids in the case when the relevant action has torsion-free stabilizers. A revised version of the rational HK-conjecture in the case of (possibly) torsion stabilizers is introduced and proved for a large class of transformation groupoids. In particular, this revised version holds for Scarparo's counterexamples to the original rational HK-conjecture. The key tools used are the Baum--Connes conjecture and a Chern character defined by Raven.
\end{abstract}

\maketitle

\tableofcontents

\section*{Introduction}
$K$-theory is a powerful invariant for C*-algebras. However, it is often difficult to compute explicitly. Matui's HK-conjecture \cite{Matui2016aa,Matui:2017hq} predicts an isomorphism between the $K$-theory of a sufficiently nice groupoid C*-algebra with the in-principle more computable homology of the groupoid defined by Crainic and Moerdijk \cite{CrainicMoerdijk2000aa}. Despite a number of positive results (see for example \cite{BonickeDellAieraGabeWillett2023aa, ProiettiYasashita2023aa} and references therein), Scarparo gave the first counterexample to the HK conjecture in \cite{Scarparo:2020aa}.  Subsequently, \cite{Deeley2023aa, OrtegaSanchez2022aa, OrtegaScarparo2023aa} exhibited other counterexamples with various additional properties. 

Based on the existence of these counterexamples, one might hope that a weaker version of the HK-conjecture holds. For example, one could hope that the isomorphism holds rationally (i.e., after tensoring with the rational numbers). This version of the conjecture was already considered around the same time as the original conjecture: see for example \cite[Example 3.7]{Matui:2017hq}. However, after considering the existing counterexamples in detail, it becomes clear that this rational version of the conjecture is only reasonable in the case when the groupoid is principal (or some related assumption). This is because, on the one hand, Scarparo's counterexample to the HK-conjecture \cite{Scarparo:2020aa} is already a counterexample to the rational version in the essentially principal case; and on the other hand, as the only known counterexamples in the principal case \cite{Deeley2023aa} do satisfy the rational version. 

The present paper has two main goals. First, we establish the rational version of the HK-conjecture for a large class of transformation groupoids obtained from actions of groups on the Cantor set with torsion-free stabilizers. The basic idea is to use the Baum--Connes conjecture and a Chern character introduced by Raven \cite{Raven:2004aa} to obtain the required isomorphism. The proof actually establishes something more general, which leads to our second goal: a revised version of the rational HK-conjecture. The new conjecture is the same as the old one when the groupoid is principal and ample, but is different when the groupoid has torsion in its isotropy groups; it also makes sense for groupoids without zero-dimensional base space.  Our method proves the revised version of the conjecture for a large class of transformation groupoids.  Again the proof is via the Baum--Connes conjecture and Raven's Chern character.

\subsection*{Detailed statements}

The reader can find more on the definitions and notation used in the main body of the paper; here we just sketch the main inputs.   Given an \'{e}tale groupoid $\mathcal{G}$, we first construct a new \'{e}tale groupoid $\widehat{\mathcal{G}}$ by `blowing-up $\mathcal{G}$ at the torsion elements of its isotropy groups' (this idea has its origins in work of Baum and Connes \cite{Baum:1988qv}). Let $H_{*}(\widehat{\mathcal{G}};\C_{\widehat{\mathcal{G}}^{(0)}})$ denote the Crainic-Moerdijk \cite[Section 3]{CrainicMoerdijk2000aa} homology of the new blow-up groupoid with coefficients in the sheaf $\C_{\widehat{\mathcal{G}}^{(0)}}$ of locally constant functions on the base space.  We also use the notation $H_{**}(\widehat{\mathcal{G}};\C_{\widehat{\mathcal{G}}^{(0)}})$ where in general we use the subscript ``$_{**}$'' to denote the $\Z/2$-graded homology theory whose even (respectively, odd) groups are the direct sum of all of the even (respectively odd) groups of the original $\Z$-graded homology theory.  The homology groups $H_{**}(\widehat{\mathcal{G}};\C_{\widehat{\mathcal{G}}^{(0)}})$ make sense under a weak finite dimensionality assumption on the base space of $\widehat{\mathcal{G}}$ that we denote by ``$c$-$\C$-$\text{dim}(\widehat{\mathcal{G}}^{(0)})<\infty$''; this assumption holds in particular if the original groupoid $\mathcal{G}$ is ample, or if its base space is a manifold.

Here is our reformulated HK conjecture.

\begin{conjecture}\label{hk revised intro}
Suppose that $\mathcal{G}$ is a second countable, locally compact, Hausdorff, \'etale groupoid such that $c$-$\C$-$\text{dim}(\widehat{\mathcal{G}}^{(0)})<\infty$ and such that the rational Baum--Connes conjecture holds for $\mathcal{G}$. Then 
\begin{equation}
K_*(C^*_r(\mathcal{G}))\otimes \C\cong H_{**}(\widehat{\mathcal{G}};\C_{\widehat{\mathcal{G}}^{(0)}}).
\end{equation}
\end{conjecture}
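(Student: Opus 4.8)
The plan is to run the desired isomorphism through the topological side of the Baum--Connes conjecture and then apply the equivariant Chern character of Raven \cite{Raven:2004aa}. By hypothesis, the rational Baum--Connes conjecture for $\mathcal{G}$ supplies an isomorphism
\[
K_*(C^*_r(\mathcal{G}))\otimes\C\;\cong\;K^{\mathcal{G}}_{**}(\underline{E}\mathcal{G})\otimes\C,
\]
where $\underline{E}\mathcal{G}$ is a classifying space for proper $\mathcal{G}$-actions and the right-hand side is the $\Z/2$-graded $\mathcal{G}$-equivariant $K$-homology of $\underline{E}\mathcal{G}$, realised as an inductive limit of equivariant Kasparov groups over the $\mathcal{G}$-compact subspaces. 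So everything reduces to producing a natural isomorphism
\[
K^{\mathcal{G}}_{**}(\underline{E}\mathcal{G})\otimes\C\;\cong\;H_{**}(\widehat{\mathcal{G}};\C_{\widehat{\mathcal{G}}^{(0)}}).
\]

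For this second isomorphism I would invoke Raven's Chern character, which for an \'etale groupoid produces a rational isomorphism from $K^{\mathcal{G}}_{**}(\underline{E}\mathcal{G})\otimes\C$ onto a ``delocalised'' homology group assembled from the $\mathcal{G}$-action on the torsion (fixed-point) data of $\underline{E}\mathcal{G}$. The blow-up groupoid $\widehat{\mathcal{G}}$ is constructed precisely so that this delocalised group coincides with the Crainic--Moerdijk \cite{CrainicMoerdijk2000aa} homology $H_{**}(\widehat{\mathcal{G}};\C_{\widehat{\mathcal{G}}^{(0)}})$; when the isotropy groups of $\mathcal{G}$ are already torsion-free one has $\widehat{\mathcal{G}}=\mathcal{G}$ and the statement reduces to the original rational HK-conjecture, with no delocalisation. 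The finite-dimensionality hypothesis $c\text{-}\C\text{-}\dim(\widehat{\mathcal{G}}^{(0)})<\infty$ is used to ensure that $H_{**}(\widehat{\mathcal{G}};\C_{\widehat{\mathcal{G}}^{(0)}})$ is well-defined and that the spectral sequence (and the inductive limit above) underlying Raven's character converge; it holds automatically when $\mathcal{G}$ is ample or $\mathcal{G}^{(0)}$ is a manifold.

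Concretely, I would proceed in four steps: (1) quote the rational Baum--Connes isomorphism for $\mathcal{G}$ and fix the topological model of $K^{\mathcal{G}}_{**}(\underline{E}\mathcal{G})$; (2) set up Raven's equivariant Chern character and verify that its hypotheses are met under the standing assumptions; (3) identify its target with $H_{**}(\widehat{\mathcal{G}};\C_{\widehat{\mathcal{G}}^{(0)}})$, carefully handling the sheaf coefficients $\C_{\widehat{\mathcal{G}}^{(0)}}$ and the passage from the $\Z$-graded to the $\Z/2$-graded homology; (4) compose the three isomorphisms.

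I expect the genuine difficulty to sit in steps (2) and (3) at the stated level of generality. Raven's Chern-character isomorphism is cleanest when $\underline{E}\mathcal{G}$ admits a tractable model, and matching its target with the Crainic--Moerdijk homology of the blow-up when $\widehat{\mathcal{G}}^{(0)}$ is not zero-dimensional amounts to comparing two sheaf-homological models, which is delicate. This is why the theorems actually proved in the body of the paper concern transformation groupoids $\Gamma\ltimes X$ (in particular with $X$ the Cantor set), where $\underline{E}(\Gamma\ltimes X)=\underline{E}\Gamma\times X$ is explicit, the rational Baum--Connes conjecture is known for a large class of groups $\Gamma$, and the two homology theories can be compared directly. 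A proof of Conjecture~\ref{hk revised intro} in full generality would require establishing Raven's Chern-character isomorphism, and this target identification, in that generality; the remaining grading and convergence bookkeeping is routine once the finite-dimensionality hypothesis is in force.
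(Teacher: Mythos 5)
The statement you are addressing is a \emph{conjecture}: the paper does not prove it in general, and neither do you --- your text is an outline of a strategy together with an honest account of where it breaks down, which is in fact exactly the situation of the paper. Your outline faithfully reproduces the paper's route in the cases it can handle: pass to the topological side of Baum--Connes, apply Raven's Chern character, and identify its codomain (the Baum--Schneider/``delocalised'' theory built from the torsion fixed-point data) with the Crainic--Moerdijk homology of the blow-up $\widehat{\mathcal{G}}$; this is precisely the content of Corollary \ref{lim cor}, Proposition \ref{bcr and gh}, Theorem \ref{main} and Corollary \ref{main cor}, and it is why the paper's actual theorem is restricted to transformation groupoids $G\ltimes X$.

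One correction of emphasis: in your second paragraph you say you would invoke Raven's Chern character ``which for an \'etale groupoid produces'' the required rational isomorphism. No such groupoid-level Chern character exists in the literature: Raven's construction (and its isomorphism theorem) is for actions of countable discrete groups, i.e.\ it only covers transformation groupoids, and the paper explicitly lists the construction of a Chern character for more general \'etale groupoids as an open problem. You do concede this at the end, but as written step (2) quietly assumes the main missing ingredient. Likewise, the identification in your step (3) of the Chern character's target with $H_{**}(\widehat{\mathcal{G}};\C_{\widehat{\mathcal{G}}^{(0)}})$ is, even in the group case, the bulk of the technical work of the paper (the whole of Appendix \ref{hh app} proving Proposition \ref{bcr and gh}), so it should not be treated as a quotable fact. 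With those caveats, your assessment of what is proved, what is conjectural, and where the difficulty lies agrees with the paper's.
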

Note that the original rational HK conjecture has several additional assumptions -- the groupoid is minimal, ample, and essentially principal -- that are not relevant for our approach.  On the other hand, the original HK conjecture does not assume the rational Baum--Connes conjecture, and our approach suggests the latter is  important (also see \cite{Proietti:2021wz,BonickeDellAieraGabeWillett2023aa}).

Our main theorem can then be stated as follows.
\begin{theorem}\label{main intro}
Conjecture \ref{hk revised intro} above holds if $\mathcal{G}$ is a transformation groupoid.
\end{theorem}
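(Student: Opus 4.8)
The plan is to reduce everything to the Baum–Connes conjecture for the groups involved, together with Raven's equivariant Chern character, by identifying all three objects in the conjectured isomorphism in ``homological'' terms. Write $\mathcal{G} = X \rtimes \Gamma$ for a countable discrete group $\Gamma$ acting on a second countable, locally compact, Hausdorff space $X$; the hypothesis $c$-$\C$-$\dim(\widehat{\mathcal{G}}^{(0)})<\infty$ plus second countability will force $X$ to be finite-dimensional in an appropriate sense, which is what keeps all the spectral sequences and Chern characters convergent. First I would treat the left-hand side: since the rational Baum–Connes conjecture is assumed for $\mathcal{G} = X \rtimes \Gamma$, we have $K_*(C^*_r(X\rtimes\Gamma))\otimes\C \cong K^{\mathrm{top}}_*(X\rtimes\Gamma)\otimes\C = K^\Gamma_*(\underline{E}\Gamma \times X)\otimes\C$, where $\underline{E}\Gamma$ is the classifying space for proper $\Gamma$-actions. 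This is the step where Baum–Connes enters and it is essentially a citation once the conjecture is invoked as a hypothesis.

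Next I would apply Raven's Chern character (the key tool flagged in the abstract): for a proper $\Gamma$-space $Z$ of finite orbit-type dimension, Raven constructs a natural isomorphism $K^\Gamma_*(Z)\otimes\C \cong \bigoplus_{n} H^{\mathrm{BM}}_{*+2n}\big(Z\rtimes\Gamma ; \mathbb{C}\big)$ — more precisely, a delocalized/groupoid homology of the transformation groupoid $Z\rtimes\Gamma$ with complex coefficients in the orientation sheaf, which for our zero-dimensional-ish or manifold base reduces to ordinary groupoid homology with $\C$-coefficients. Applying this with $Z = \underline{E}\Gamma\times X$ identifies the right-hand side of the previous paragraph with $H_{**}\big((\underline{E}\Gamma\times X)\rtimes\Gamma; \C\big)$. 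The third and final ingredient is to recognize this last group as $H_{**}(\widehat{\mathcal{G}};\C_{\widehat{\mathcal{G}}^{(0)}})$: by construction the blow-up $\widehat{\mathcal{G}}$ is $\mathcal{G}$ with its torsion isotropy replaced by the classifying spaces of those finite groups, so that $\widehat{\mathcal{G}}$ is Morita equivalent (as a topological groupoid, up to homology) to $(\underline{E}\Gamma\times X)\rtimes\Gamma$; invariance of Crainic–Moerdijk homology under such equivalences (or a direct comparison of the simplicial/bar complexes) then gives $H_{**}(\widehat{\mathcal{G}};\C_{\widehat{\mathcal{G}}^{(0)}}) \cong H_{**}\big((\underline{E}\Gamma\times X)\rtimes\Gamma;\C\big)$. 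Chaining the three isomorphisms yields the theorem.

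The main obstacle, and where I expect most of the real work to go, is the third step: pinning down precisely what ``blow-up at torsion'' means as a concrete étale groupoid, choosing a genuinely locally compact Hausdorff étale model for $(\underline{E}\Gamma\times X)\rtimes\Gamma$ (a priori $\underline{E}\Gamma$ is only a space, not the base of an ample groupoid, and its product with $X$ need not be zero-dimensional), and then proving that Crainic–Moerdijk homology is invariant under the relevant Morita-type equivalence between this model and $\widehat{\mathcal{G}}$ — and that this invariance is compatible with the $c$-$\C$-$\dim<\infty$ hypothesis so that $H_{**}$ is defined on both sides. A secondary technical point is checking naturality/compatibility of Raven's Chern character with the restriction maps used to assemble $\underline{E}\Gamma\times X$ from its finite-stabilizer pieces, so that the composite isomorphism is canonical rather than depending on choices; this is needed if one wants the statement to be functorial, though for the bare existence of an isomorphism in Theorem \ref{main intro} one can be more cavalier. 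The finite-dimensionality bookkeeping (ensuring Raven's character applies and that the homological degrees one sums over are finite) threads through all three steps but should be routine given the standing hypothesis.
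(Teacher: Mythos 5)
Your overall architecture is the same as the paper's: assume the rational Baum--Connes conjecture to replace $K_*(C^*_r(\mathcal{G}))\otimes\C$ by the topological side, apply Raven's Chern character, and then identify the Chern character's codomain with the homology of the blow-up groupoid; you also correctly locate the real work in that last identification. However, the way you execute the second and third steps contains genuine errors. First, the left-hand side of Baum--Connes with coefficients in $C_0(X)$ is $RKK^\Gamma_*(C_0(\underline{E}\Gamma),C_0(X))$, not the equivariant $K$-homology of the product $\underline{E}\Gamma\times X$ (already for trivial $\Gamma$ these are $K$-theory versus $K$-homology of $X$). Second, Raven's theorem is a \emph{bivariant} Chern character $KK^\Gamma_*(C_0(Y),C_0(X))\otimes\C\to\widehat{HH}^{**}_{\Gamma,\C}(Y,X)$, whose codomain is the delocalized Ext-type theory built from the fixed-point sets $X^{g}$ and centralizers $Z(g)$ (equivalently the Baum--Schneider groups, Lemma \ref{bs=r}); it is not a map into the homology of the transformation groupoid $(\underline{E}\Gamma\times X)\rtimes\Gamma$ with constant coefficients, and no such map could be an isomorphism in the presence of torsion.

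Concretely, your third step fails: take $\Gamma=\Z/2$ and $X$ a point, so $\underline{E}\Gamma$ is a point. Then $H_{**}\big((\underline{E}\Gamma\times X)\rtimes\Gamma;\C\big)=H_{**}(\Z/2;\C)=\C$, while $\widehat{X}=\Z/2$ and $H_{**}(\widehat{\mathcal{G}};\C_{\widehat{\mathcal{G}}^{(0)}})=H_{**}(\Z/2,\C[\Z/2])\cong\C^2$, which is the correct answer matching $K_0(C^*_r(\Z/2))\otimes\C\cong\C^2$. So $(\underline{E}\Gamma\times X)\rtimes\Gamma$ is not Morita equivalent to $\widehat{\mathcal{G}}$ (their orbit spaces and isotropy already differ in the example $\Gamma=\Z$, $X$ a point), and even ``up to homology'' the identification is false because your target groupoid sees only the identity conjugacy class and misses the delocalized summands $\bigoplus_{c}H_*(Z(g_c),\C[X^{g_c}])$ of Proposition \ref{Rem:ComHGamma}. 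The bridge you need is instead the identification of the codomain of Raven's character, $H^*_{\Gamma,\C,!}(\underline{E}\Gamma,X)$, with the group hyperhomology $H_*(\Gamma,\Gamma_c(I^{-\bullet}))$ of the blown-up space $\widehat{X}$, and hence with $H_{**}(\widehat{\mathcal{G}};\C_{\widehat{\mathcal{G}}^{(0)}})$; this is exactly Proposition \ref{bcr and gh}, whose proof (Appendix \ref{hh app}: simplicial models of $\underline{E}\Gamma$, $c$-soft resolutions, projectivity and flatness arguments, and a comparison of double complexes) is the bulk of the paper and cannot be replaced by a citation of Morita invariance of Crainic--Moerdijk homology.
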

In particular, this establishes the original rational HK conjecture for transformation groupoids that satisfy the Baum--Connes conjecture, and are associated to actions with torsion-free stabilizers.  It also shows that the natural analog of the rational HK conjecture is true for many non-ample groupoids; results on the HK conjecture for non-ample groupoids have previously been established by Proietti and Yamashita \cite{Proietti:2023aa}.  

The class of transformation groupoids satisfying the Baum--Connes conjecture is large: thanks to deep work of Tu \cite{Tu:1999bq} (based on Higson-Kasparov \cite{Higson:2001eb}) it includes all a-T-menable actions; in particular, this includes all amenable actions, and all actions of a-T-menable (and amenable) groups.

\subsection*{Outline of the proof}

For simplicity, let us consider an action of a torsion-free discrete group $G$ on the Cantor set $X$.  The transformation groupoid is denoted by $G \ltimes X$. We have the following diagram 
\[\xymatrix{
RKK^{G}_*( C_0(\underline{E}G), C(X))\otimes \C \ar[d]^{{\rm ch}} \ar[r]^-{\mu} & K_*(C_r^*(G \ltimes X))\otimes \C\\
H_{**}( G \ltimes X)\otimes \C & }
\]
where 
\begin{enumerate}
\item $\mu$ denotes the Baum--Connes assembly map \cite{Baum:1994pr},
\item $H_{**}( G \ltimes X)$ is the ($\Z/2$-graded variant of the) groupoid homology defined for ample groupoids by Matui \cite{Matui:2017hq} as a simplified version of the Crainic-Moerdijk theory, and
\item ${\rm ch}$ denotes the Chern character defined by Raven \cite{Raven:2004aa}.
\end{enumerate}
Thus, if the rational Baum--Connes assembly map and the Chern character are isomorphisms, then 
\[ K_*(C^*_r(G \ltimes X))\otimes \C \cong H_{**}( G \ltimes X)\otimes \C. \]
Hence the rational HK-conjecture holds for the groupoid $G \ltimes X$. 

Now, the Baum--Connes assembly map being an isomorphism is part of our assumptions, and Raven \cite{Raven:2004aa} established that his Chern character is an isomorphism in general.  Thus for most of the proof of Theorem \ref{main intro}, we do not claim any real originality: the main results we need are due to Raven \cite{Raven:2004aa}, Baum-Schneider \cite{Baum:2002aa} and Schneider \cite{Schneider:1998aa}.  Our goals here are expository: to introduce the ingredients to non-experts, and also to advertise the work of Raven which seems to have been overlooked by experts in this area.  

The only real reason the above sketch is not a complete proof is that the Chern Character that Raven constructed in \cite{Raven:2004aa} does not by definition have codomain in groupoid homology.  The identification of Raven's codomain with groupoid homology is implicit in the work of Baum-Schneider \cite{Baum:2002aa} and Schneider \cite{Schneider:1998aa}, but in a less refined version that we would like, and with limited detail that makes it difficult to access for non-experts.  Most of the work in this paper is a proof that the codomain of Raven's Chern character is naturally isomorphic to groupoid homology: we give a more precise result than can be extracted from the source material in the papers \cite{Baum:2002aa,Schneider:1998aa}, as well as providing far more detail for the benefit of non-experts; this is the main technical innovation of the current paper.

It is worth noting that for our applications to the rational HK-conjecture we could work with the space being totally disconnected and all homology theories having rational or even complex coefficients. Nevertheless, when possible we have worked more generally: as examples of this see the statements of Proposition \ref{bcr and gh} and Theorem \ref{main}.

Based on the structure of the proof of the main result, it is an interesting problem to construct a Chern character for more general groupoids.  Another interesting problem would be to construct a Chern character that works over smaller coefficient rings under appropriate assumptions: the approach in \cite{Luck:2002lk} may be useful here.

\subsection*{Structure of the paper}

Section \ref{secPrelim} contains preliminaries including the precise statements of the HK-conjecture and the rational version. In Section \ref{rcc sec}, Raven's Chern character is introduced. This discussion includes a detailed introduction to the codomain of this Chern character. The explicit isomorphism between this codomain and groupoid homology is stated, but the proof is postponed to Appendix \ref{hh app}; this is because we wanted to make the main body of the paper available to readers who prefer to treat the result of Appendix \ref{hh app} as a `black box'. Section \ref{rcc sec} contains the main results: these are Theorem \ref{main} and Corollary \ref{main cor}. In addition, the revised version of the rational HK-conjecture (Conjecture \ref{hk revised intro} above) is carefully stated for groupoids with torsion stabilizers (see Conjecture \ref{hk revised}), and a few examples and computational techniques are discussed. 

\section*{Acknowledgments}
Both authors thank the University of Colorado Boulder and the University of \Hawaii\ at \Manoa\ for facilitating this collaboration. The first listed author thanks the Fields Institute for support during a visit for the Thematic Program on Operator Algebras and Applications in the fall of 2023. In particular, this visit to the Fields facilitated useful discussions with Alistair Miller and Ryszard Nest.  We would also like to thank Wolfgang L\"{u}ck, Alistair Miller, Valerio Proietti, and Christian Voigt for useful comments on a preliminary draft.

\section{Preliminaries} \label{secPrelim}

We start by introducing notational conventions for \'{e}tale groupoids and their $C^*$-algebras: see for example \cite{Sims:2017aa} for background material.  Let $\mathcal{G}$ be a groupoid with unit space $\mathcal{G}^{(0)}$ and range/source maps denoted by $r, s : \mathcal{G} \to \mathcal{G}^{(0)}$. The ordered pair $g, h \in \mathcal{G}$ is composable if $s(g) = r(h)$; their composition is denoted $gh$.  The inverse of $g \in \mathcal{G}$ is denoted $g^{-1}$. All groupoids considered will be locally compact and Hausdorff. Moreover, all groupoids in the paper will be \'etale, meaning that $r$ and $s$ are local homeomorphisms. In this case $\mathcal{G}^{(0)}$ is an open subset of $\mathcal{G}$ and the relevant Haar system is given by counting measures. We say that $\mathcal{G}$ is principal if for each $x \in \mathcal{G}^{(0)}$ the isotropy group
\[ \mathcal{G}^x_x := \{ g \in \mathcal{G} \mid s(g) = r(g) = x \} \] 
is trivial (i.e., equal to $\{ x \}$). We say that $\mathcal{G}$ is essentially principal if the interior of the set $\{ g \in \mathcal{G} \mid s(g)=r(g) \}$ is $\mathcal{G}^{(0)}$. Notice that principal implies essentially principal, but the converse is false. A groupoid is ample if its unit space is totally disconnected (e.g., the Cantor set).

To a groupoid $\mathcal{G}$ satisfying the assumptions above one can associate its reduced groupoid $C^*$-algebra. The resulting $C^*$-algebra is denoted by $C^*_r(\mathcal{G})$. The computation of the $K$-theory of $C^*_r(\mathcal{G})$ is an important problem. The homology of $\mathcal{G}$ was defined in \cite{CrainicMoerdijk2000aa} and will be denoted by $H_*(\mathcal{G})$. 

With this notation introduced, Matui's HK-conjecture \cite{Matui2016aa} is the following:
\begin{conjecture}
Suppose that $\mathcal{G}$ is a second countable, \'etale, essentially principal, minimal, ample groupoid. Then
\[ K_*(C^*_r(\mathcal{G})) \cong H_{**}(\mathcal{G}). \] 
\end{conjecture}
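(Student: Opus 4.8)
The plan is to attack the displayed isomorphism by routing it through the Baum--Connes assembly map together with a Chern-character transformation, mimicking the strategy this paper uses for its revised conjecture. I should flag at the outset that, since this is Matui's original HK-conjecture and Scarparo \cite{Scarparo:2020aa} has shown it to be false in general, no complete proof exists; what follows is the natural line of attack together with a diagnosis of where, and why, it stalls. The starting point is that $\mathcal{G}$ is ample and essentially principal, so its isotropy groups are trivial on a dense subset of $\mathcal{G}^{(0)}$; in the cleanest situation --- a transformation groupoid of an action with torsion-free stabilizers --- there are no \emph{torsion} elements in the isotropy at all, so the torsion blow-up $\widehat{\mathcal{G}}$ of the introduction coincides with $\mathcal{G}$ and there is nothing to correct for.

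First I would introduce the assembly map $\mu$ from the topological $K$-theory of $\mathcal{G}$ --- an $RKK$-group built from a classifying space for proper actions --- to $K_*(C^*_r(\mathcal{G}))$, and assume the Baum--Connes conjecture for $\mathcal{G}$, which by Tu's theorem \cite{Tu:1999bq} is automatic for a-T-menable (in particular amenable) transformation groupoids, so that $\mu$ is an isomorphism. Next I would compose $\mu^{-1}$ with Raven's Chern character $\ch$ from \cite{Raven:2004aa}, which is an isomorphism onto its codomain after tensoring with $\C$; invoking the identification of that codomain with the $\Z/2$-graded groupoid homology of $\mathcal{G}$ --- the technical heart of this paper, stated in the main text and proved in Appendix \ref{hh app} --- this produces a natural isomorphism $K_*(C^*_r(\mathcal{G}))\otimes\C\cong H_{**}(\mathcal{G})\otimes\C$. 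Finally I would try to descend from $\C$-coefficients to $\Z$-coefficients, and it is here that the plan breaks down.

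The hard part --- indeed the fatal obstruction --- is exactly this last step. Raven's Chern character is only an isomorphism after tensoring with $\C$; no Chern character with integral, or even rational, coefficients is known for general groupoids, and building one is singled out in the introduction as an open problem. More decisively, the integral statement as worded is not true: Scarparo's groupoids \cite{Scarparo:2020aa} are second countable, \'etale, minimal, ample and essentially principal --- they meet every hypothesis --- yet their groupoid homology and $K$-theory disagree, and, since those examples carry torsion isotropy, they defeat even the rational version; this is precisely why the rational HK-conjecture is only reasonable under a principality-type hypothesis, and why this paper's revised conjecture keeps $\mathcal{G}$ on the $K$-theory side but replaces its homology by that of the blow-up $\widehat{\mathcal{G}}$. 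So the realistic target is not the displayed conjecture itself but its rational, torsion-corrected form: if $\mathcal{G}$ is a transformation groupoid with torsion-free stabilizers satisfying Baum--Connes, then $\widehat{\mathcal{G}}=\mathcal{G}$ and the Baum--Connes-plus-Chern-character argument above gives $K_*(C^*_r(\mathcal{G}))\otimes\C\cong H_{**}(\mathcal{G})\otimes\C$, which is exactly Theorem \ref{main intro} in this case. The obstruction to the statement as literally written is therefore not technical but structural: it is the counterexamples.
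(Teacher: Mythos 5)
The statement you were given is Matui's HK-conjecture itself, which the paper states only to record that it is \emph{false} (Scarparo's counterexamples) and does not attempt to prove; there is no proof in the paper to compare against. Your analysis is correct and matches the paper's own discussion exactly: the Baum--Connes-plus-Raven-Chern-character route yields only the rational, torsion-corrected isomorphism $K_*(C^*_r(\mathcal{G}))\otimes\C\cong H_{**}(\widehat{\mathcal{G}};\C_{\widehat{\mathcal{G}}^{(0)}})$ (Theorem \ref{main intro}), the integral and even rational versions of the original conjecture fail for essentially principal groupoids with torsion isotropy, and the obstruction is structural rather than technical.
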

(Here we use the convention from the introduction that $H_{**}$ denotes the $\Z/2$-graded homology theory associated to a $\Z$-graded homology theory $H_*$ defined to have even (respectively, odd) group the direct sum of all the even (odd) groups of $H_*$).  The rational version of this conjecture is the following:
\begin{conjecture}
Suppose that $\mathcal{G}$ is a second countable, \'etale, essentially principal, minimal, ample groupoid. Then
\[ K_*(C^*_r(\mathcal{G})) \otimes \Q \cong H_{**}(\mathcal{G})\otimes \Q .\] 
\end{conjecture}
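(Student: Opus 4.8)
The plan is to realize the desired isomorphism as the composite of the rational Baum--Connes assembly map and Raven's Chern character, whose codomain I will identify with $H_{**}(\widehat{\mathcal{G}};\C_{\widehat{\mathcal{G}}^{(0)}})$. Write $\mathcal{G}=G\ltimes X$; since $\mathcal{G}$ is \'etale and second countable, $G$ is a countable discrete group acting on a second countable locally compact Hausdorff space $X$, and $C^*_r(\mathcal{G})=C_0(X)\rtimes_r G$. Since $G$ is countable, fix a second countable, locally compact model $\underline{E}G$ for the classifying space of proper $G$-actions.

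First, by definition -- and via the standard identification of groupoid Baum--Connes for a transformation groupoid with group Baum--Connes with coefficients -- the rational Baum--Connes conjecture for $\mathcal{G}$, which is one of our hypotheses, says that the assembly map
\[
\mu\colon RKK^G_*\bigl(C_0(\underline{E}G),C_0(X)\bigr)\otimes\C\;\longrightarrow\;K_*\bigl(C^*_r(\mathcal{G})\bigr)\otimes\C
\]
is an isomorphism. (As noted in the introduction, this holds for a very large class of actions -- in particular all a-T-menable ones, by Tu \cite{Tu:1999bq}.) Second, Raven \cite{Raven:2004aa} constructs a Chern character
\[
{\rm ch}\colon RKK^G_*\bigl(C_0(\underline{E}G),C_0(X)\bigr)\otimes\C\;\longrightarrow\;\mathcal{H}_{**},
\]
whose codomain $\mathcal{H}_{**}$ is a $\Z/2$-graded ``delocalized'' homology theory assembled from the fixed-point data of the finite-order elements of $G$, and proves that ${\rm ch}$ is an isomorphism. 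Here the finite-dimensionality hypothesis $c$-$\C$-$\text{dim}(\widehat{\mathcal{G}}^{(0)})<\infty$ is what guarantees that the relevant graded pieces vanish above a fixed degree, so that $\mathcal{H}_{**}$ is well behaved and ${\rm ch}$ is a genuine isomorphism after folding the underlying $\Z$-graded theory into two degrees. Composing the two maps gives $K_*(C^*_r(\mathcal{G}))\otimes\C\cong\mathcal{H}_{**}$.

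It then remains to construct a natural isomorphism $\mathcal{H}_{**}\cong H_{**}(\widehat{\mathcal{G}};\C_{\widehat{\mathcal{G}}^{(0)}})$; this is the technical heart of the paper, carried out in Appendix \ref{hh app}. The route I would follow is: (i) describe the blow-up $\widehat{\mathcal{G}}$ for $\mathcal{G}=G\ltimes X$ explicitly -- its unit space consists of the pairs $(x,g)$ with $g\in G$ of finite order and $gx=x$, topologized as a subspace of $X\times G$, with $G$ acting by $h\cdot(x,g)=(hx,hgh^{-1})$, so that $\widehat{\mathcal{G}}=G\ltimes\widehat{\mathcal{G}}^{(0)}$ decomposes over conjugacy classes of finite-order elements, with the $[g]$-summand Morita equivalent to $Z_G(g)\ltimes X^{g}$ (this is the ``blow-up at torsion in the isotropy'' of the introduction, in the spirit of Baum--Connes \cite{Baum:1988qv}); hence, by Morita invariance of Crainic--Moerdijk homology, $H_{**}(\widehat{\mathcal{G}};\C_{\widehat{\mathcal{G}}^{(0)}})\cong\bigoplus_{[g]}H_{**}\bigl(Z_G(g)\ltimes X^{g};\C\bigr)$; (ii) match these summands with the defining complexes of Raven's theory, which are built from precisely this fixed-point data; and (iii) check that Raven's differentials correspond to the Crainic--Moerdijk differentials with coefficients in the sheaf of locally constant functions, so that the two $\Z$-graded theories agree degreewise, hence after folding. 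This refines -- and supplies full detail for -- what is only implicit, and in a coarser bivariant form, in Baum--Schneider \cite{Baum:2002aa} and Schneider \cite{Schneider:1998aa}.

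Chaining the isomorphisms of the previous two paragraphs yields $K_*(C^*_r(\mathcal{G}))\otimes\C\cong H_{**}(\widehat{\mathcal{G}};\C_{\widehat{\mathcal{G}}^{(0)}})$, which is the assertion of Conjecture \ref{hk revised intro} for $\mathcal{G}$. I expect the main obstacle to be step (iii), and more broadly Appendix \ref{hh app}: the Baum--Connes input is a hypothesis and Raven's theorem that ${\rm ch}$ is an isomorphism is quotable, but turning the implicit, bivariant identification found in \cite{Baum:2002aa,Schneider:1998aa} into a clean, natural, degreewise isomorphism with Crainic--Moerdijk homology -- carefully tracking the coefficient sheaf $\C_{\widehat{\mathcal{G}}^{(0)}}$ of locally constant functions and the grading conventions -- is where essentially all of the real work lies.
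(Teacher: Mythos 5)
The statement you are trying to prove is not a theorem of the paper: it is the (original) rational HK-conjecture, which the paper records only in order to point out that it is \emph{false}. Immediately after stating it, the paper notes that Scarparo's example \cite{Scarparo:2020aa} --- a second countable, \'etale, essentially principal, minimal, ample transformation groupoid --- is a counterexample even to this rational version. So no proof of the statement as written can be correct, and the task is to locate where your argument silently changes the claim.

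The gap is in the final identification. Your chain of isomorphisms (Baum--Connes assembly, Raven's Chern character, and the appendix identification) lands on $H_{**}(\widehat{\mathcal{G}};\C_{\widehat{\mathcal{G}}^{(0)}})$, the homology of the \emph{blow-up} groupoid built from the torsion elements of the isotropy --- and indeed you conclude by observing that you have proved Conjecture \ref{hk revised intro}. But the statement to be proved asserts an isomorphism with $H_{**}(\mathcal{G})\otimes\Q$, the homology of the original groupoid. These agree only when the isotropy is torsion-free, i.e.\ when $\widehat{\mathcal{G}}^{(0)}=\mathcal{G}^{(0)}$. Essential principality does \emph{not} force this: it only requires the interior of the isotropy to be the unit space, so torsion isotropy can live over a nowhere dense set, and by your own decomposition $H_{**}(\widehat{\mathcal{G}})\cong\bigoplus_{[g]}H_{**}(Z_G(g)\ltimes X^{g};\C)$ every nontrivial torsion conjugacy class with nonempty fixed-point set contributes an extra summand. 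In Scarparo's example this is exactly what happens: $K_0\otimes\Q\cong (R\oplus\Z^m)\otimes\Q\cong H_0(\widehat{\mathcal{G}})\otimes\Q$, whereas $H_0(\mathcal{G})\otimes\Q\cong R\otimes\Q$, so the stated isomorphism fails while the revised one holds. (Separately, you also import hypotheses not present in the statement --- that $\mathcal{G}$ is a transformation groupoid and that rational Baum--Connes holds for it --- but even granting these, the argument proves the revised conjecture, not the stated one.)
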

There are counterexamples to both these conjectures. The first counterexample is due to Scarparo \cite{Scarparo:2020aa}, also see \cite{Deeley2023aa, OrtegaScarparo2023aa, OrtegaSanchez2022aa} for other counterexamples. However, currently there is no counterexample to the {\bf rational} version of the conjecture when $\mathcal{G}$ is principal (rather than just essentially principal).

Most of the groupoids considered in this paper are constructed from group actions.   Throughout the paper, $G$ denotes a discrete group.  A \emph{$G$-space} is a topological space equipped with a left action of a discrete group $G$ by homeomorphisms.  Let $X$ be a locally compact Hausdorff $G$-space. The transformation groupoid $\mathcal{G}=G \ltimes X $ associated to this data is defined to be $G \times X$ where
\begin{enumerate}[(a)]
\item The composition $(\gamma, x) \cdot (\alpha, y)$ is equal to $(\gamma \alpha, y)$ when $x=\alpha y$ and is not defined otherwise.
\item The inverse of $(\gamma, x)$ is given by $(\gamma^{-1}, \gamma x)$.
\item Upon identifying $\mathcal{G}^{(0)}$ with $X$ via $x\in X \mapsto (e, x)\in \mathcal{G}$, the range and source maps are given by $(\gamma, x) \mapsto \gamma x$ and $(\gamma, x) \mapsto x$ respectively.
\end{enumerate}
In relation to the HK-conjecture, we have the following relationships between properties of the action of $G$ on $X$ and properties of the transformation groupoid $G \ltimes X$:  
\begin{enumerate}[(a)]
\item The transformation groupoid is second countable if $X$ is second countable and $G$ is countable.
\item The transformation groupoid is \'etale because $G$ is discrete. 
\item The transformation groupoid is (essentially) principal if and only if the action of $G$ on $X$ is (topologically) free.
\item The transformation groupoid is minimal if and only if the action of $G$ on $X$ is minimal.
\item The transformation groupoid is ample if and only if $X$ is totally disconnected.
\end{enumerate}

We will also need to consider groupoid actions.  For the following definition, if $X$, $Y$, and $Z$ are topological spaces equipped with continuous maps $f:X\to Z$ and $g:Y\to Z$, then we define $X_f\! \times_{g} Y:=\{(x,y)\in X\times Y\mid f(x)=g(y)\}$ equipped with the topology it inherits from $X\times Y$.
\begin{definition} \label{def:GroupoidAction}
Suppose $\mathcal{G}$ is a groupoid and $Y$ is space. Then an \emph{action} of $\mathcal{G}$ on $Y$ is given by the following data. There are maps $p: Y \rightarrow \mathcal{G}^{(0)}$ and $\mathcal{G}_s \!\times_{p} Y \rightarrow Y$ written $(g, y) \mapsto gy$ such that 
\begin{enumerate}[(a)]
\item $p(g y)= p(y)$;
\item $p(y) y = y$;
\item $g( hy) = (gh) y$.
\end{enumerate}
The map $p$ is called the \emph{anchor map}.
\end{definition}

Throughout, $k$ denotes a unital commutative ring.  If $S$ is a topological space, then $k[S]$ denotes the ring of compactly supported and locally constant functions from $S$ to $k$ (if there is no given topology on $S$ then we use the discrete topology, in which case the `locally constant' condition is automatic).  If $S$ is equipped with a $G$-action by homeomorphisms, then $k[S]$ is equipped with the induced action.   We will also write $kG$ for the group ring of $G$ equipped with the usual convolution multiplication: this should be contrasted with $k[G]$, which for us means the ring of compactly supported $k$-valued function on $H$ equipped with \emph{pointwise} multiplication.

\section{Baum-Schneider homology and Raven's Chern character }\label{rcc sec}

\subsection{Derived categories and Baum-Schneider bivariant homology}

We will need to use derived categories of abelian categories, and derived functors between them.  Here we introduce the basic ideas and notation; see for example \cite[Chapter 10]{Weibel:1995ty} or \cite[Chapter 1]{Kashiwara:1990aa} for more detailed background.  

Let $\mathcal{A}$ be an abelian category (we discuss examples below).  We will write $\dcc(\mathcal{A})$ for the derived category of bounded below cochain complexes from $\mathcal{A}$.  Thus objects of $\dcc(\mathcal{A})$ are cochain complexes
$$
\cdots \to A_{-2}\to A_{-1}\to A_0\to A_1\to A_2 \to \cdots 
$$
of objects from $\mathcal{A}$ indexed by $\Z$, and where $A_n=0$ for all $n$ suitably small.  Morphisms in $\dcc(\mathcal{A})$ are based on chain morphisms, but with all quasi-isomorphisms formally inverted: see the background references above for details.   

Compare \cite[Definition 10.7.1]{Weibel:1995ty} for the following definition.

\begin{definition}\label{hext}
Let $\mathcal{A}$ be an abelian category, $n\in \Z$, and let $A$ and $B$ be objects of $\dcc(\mathcal{A})$.  Let  ``$[-n]$'' denote the operation of shifting a chain complex `right' by $n$, so that the degree zero part of $B[-n]$ is the degree $-n$ part of $B$ and so on.  The \emph{hyperext}\footnote{If $A$ and $B$ are objects of $\mathcal{A}$ identified with cochain complexes concentrated in degree zero, and if $\mathcal{A}$ has enough projectives, then $\text{Ext}_{\mathcal{A}}^n(A,B)$ is the usual $\text{Ext}$ group.} group of $A$ and $B$ is defined to be 
$$
\text{Ext}_{\mathcal{A}}^n(A,B):=\text{Hom}_{\dcc(\mathcal{A})}(A,B[-n]).
$$
\end{definition}

\begin{examples}
Here are the key examples of abelian categories we will use.  For the first example, we assume the reader is (somewhat) familiar with the basic definitions of sheaf theory: background on this can be found (for example) in \cite{Bredon:1997aa}, \cite{Godement:1958aa}, \cite{Swan:1964aa}, or \cite[Chapter 2]{Kashiwara:1990aa}.  
\begin{itemize}
\item Let $Z$ be a locally compact, Hausdorff topological $G$-space.  Let $k_Z$ denote the sheaf of locally constant functions from $Z$ to $k$.  A \emph{sheaf of $k$-modules} over $Z$ is then a sheaf $F$ where there is a continuous module action of $k_Z$ in the natural sense: compare for example \cite[page 3]{Bredon:1997aa}.  Following \cite[page 195]{Grothendieck:1957aa} a \emph{$G$-sheaf} $F$ of $k$-modules over $Z$ is a sheaf of $k$-modules over $Z$ such that if $\pi:\mathcal{F}\to Z$ is the corresponding \'{e}tale space\footnote{i.e.\ the space defining a sheaf in the form appearing for example in \cite[Definition 1.2]{Bredon:1997aa}.}, then $\mathcal{F}$ is a $G$-space and $\pi$ is equivariant\footnote{See also \cite[Definitions 6.1.2 and 6.1.5]{Raven:2004aa} for an equivalent definition of $G$-sheaves in terms of presheaves.}.  Morphisms between $G$-sheaves (of $k$-modules) over $Z$ are defined to be sheaf (module) homomorphisms such that the induced maps between \'{e}tale spaces are equivariant.  With these morphisms, the $G$-sheaves of $k$-modules over $Z$ form an abelian category that we denote $\text{Sh}_G(Z)$: kernels and cokernels have the same underlying sheaves as in the non-equivariant case, equipped with the induced $G$-action. 
\item Let $R$ be a (possibly non-unital) ring equipped with an action of $G$ by automorphisms.  A module $M$ over $R$ is \emph{non-degenerate} if $RM=M$.  We write $G$-$R$ for the category of nondegenerate $R$-modules equipped with a compatible\footnote{Precisely, if $\rho:G\to \text{Aut}(R)$ is the $G$-action, then there is an action $\mu:G\to \text{Aut}_{Ab}(M)$ by abelian group automorphisms (\emph{not} $R$-module automorphisms!) that satisfies $\mu_g(rm)=\rho_g(r)\mu_g(m)$ for all $r\in R$, $m\in M$, and $g\in G$.} $G$-action.  The key examples of $R$ we will use are $R=k$ equipped with the trivial $G$-action, and $R=k[G_{tor}]$ the ring of finitely supported functions from $G_{tor}$ to $k$ with pointwise operations, where $G_{tor}$ is the set of torsion elements of $G$ equipped with the conjugation $G$-action and $k[G_{tor}]$ is equipped with the induced action.  
\end{itemize}
\end{examples}

The following definition is based on \cite[Line (12.2) on page 202]{Baum:1988qv}.

\begin{definition}\label{z hat}
Let $G_{tor}$ be the subset of torsion elements of $G$, equipped with the conjugation action, and let $Z$ be a $G$-space.  Define 
$$
\widehat{Z}:=\{(z,g)\in Z\times G_{tor}\mid gz=z\}
$$
equipped with the subspace topology and $G$-action that it inherits from $Z\times G_{tor}$.  We write $\phi_Z:\widehat{Z}\to G_{tor}$ for the (restriction of the) canonical equivariant projection to the second variable.  
\end{definition}

\begin{definition}\label{gammac}
Let $Z$ and $\widehat{Z}$ be as in Definition \ref{z hat} above, and let $F$ be a $G$-sheaf of $k$-modules on $\widehat{Z}$.  Let $\Gamma_c(\widehat{Z},F)$ be the module of compactly supported sections; this is a $G$-$k[G_{tor}]$ module for the induced $G$-action, and the $k[G_{tor}]$-action defined using $\phi_Z$.   The process of taking compactly supported sections thus defines a functor
$$
\Gamma_c:\text{Sh}_G(\widehat{Z})\to G\text{-}k[G_{tor}],\quad F\mapsto \Gamma_c(\widehat{Z},F)
$$
from the abelian category of $G$-equivariant sheaves of $k$-modules over $\widehat{Z}$ to the abelian category of $G$-$k[G_{tor}]$ modules. 
\end{definition}

We will need to also discuss derived functors between derived categories.  Let $F:\mathcal{A}\to \mathcal{B}$ be an additive functor between abelian categories.  The canonical `term-wise' extension of $F$ to chain complexes does not in general define a functor $\dcc(\mathcal{A})\to \dcc(\mathcal{B})$ as $F$ does not typically take quasi-isomorphisms to something invertible.  Roughly speaking, the (total) right derived functor\footnote{The total right derived functor should not be confused with the more classical right derived functors (or `satellite functors') $R^iF$: indeed, $R^iF(A)$ is isomorphic to the $i^\text{th}$ cohomology group of $RF(A)$, and thus the groups $R^iF$ contain rather less information than $RF$.} 
$$
RF:\dcc(\mathcal{A})\to \dcc(\mathcal{B})
$$
is the `best approximation' to $F$ that does make sense on the level of derived categories.  One way to compute $RF(A)$ for some cochain complex $A\in \dcc(\mathcal{A})$ is to replace $A$ with a quasi-isomorphic cochain complex $I$ consisting of injective objects from $\mathcal{A}$ (this can be done if $\mathcal{A}$ has enough injectives), and then apply $F$ term-wise to $I$: see for example \cite[Section 10.5]{Weibel:1995ty} for details.  There are also left-derived functors, which are analogous but with appropriate arrows reversed (and in particular with the role of injectives instead played by projectives).  

Now, just as for the classical case of non-equivariant sheaves, the functor $\Gamma_c$  of Definition \ref{gammac} is left exact and the category $\text{Sh}_G(\widehat{Z})$ of $G$-sheaves over $\widehat{Z}$ has enough injective objects (see \cite[Proposition 5.1.2]{Grothendieck:1957aa} or \cite[Lemma 1]{Schneider:1998aa}).  Hence the total right derived functor of $\Gamma_c$ exists: see for example \cite[Existence Theorem 10.5.6]{Weibel:1995ty}.

\begin{definition}\label{r der}
We write 
$$
R\Gamma_c:\dcc(\text{Sh}_G(\widehat{Z}))\to \dcc(G\text{-}k[G_{tor}])
$$
for the total right derived functor of the functor $\Gamma_c$ from Definition \ref{gammac}.
\end{definition}

The following definition is taken from \cite[pages 315-6]{Baum:2002aa}.  Recall that we write $k_Z$ for the $G$-sheaf of locally constant $k$-valued functions on a $G$-space $Z$.

\begin{definition}\label{bs hom}
Let $Z$ and $X$ be locally compact, Hausdorff $G$-spaces.  The \emph{Baum-Schneider homology} $H^*_{G,c,k}(Z,X)$ is defined to be the hyperext group (see Definition \ref{hext})
$$
H^*_{G,k,c}(Z,X):=\text{Ext}_{G\text{-}k[G_{tor}]}^*(R\Gamma_ck_{\widehat{Z}},R\Gamma_ck_{\widehat{X}})
$$
of $R\Gamma_ck_{\widehat{Z}}$ and $R\Gamma_ck_{\widehat{X}}$ (see Definition \ref{r der}). 
\end{definition}

Our goal in the remainder of this subsection is to briefly describe another bivariant cohomology theory introduced by Raven, and show that it is the same as the Baum-Schneider theory from Definition \ref{bs hom}.  This is no doubt known to experts, but does not appear to be explicitly recorded.

The following are taken from \cite[Definitions 6.5.1 and 7.3.5]{Raven:2004aa}. 

\begin{definition}\label{rav hom}
Let $Z$ and $X$ be locally compact, Hausdorff $G$-spaces.  Define 
$$
HH^*_{G,k}(Z,X):=\text{Ext}^*_{G\text{-}k}(R\Gamma_ck_Z,R\Gamma_ck_X),
$$
where $\Gamma_c:\text{Sh}_G(Z)\to G\text{-}k$ is the (more basic) analogue of the functor from Definition \ref{r der} without $Z$ replaced by $\widehat{Z}$.  

As usual, let $G_{tor}$ denote the subset of $G$ consisting of torsion elements, and let $G_{tor}//G$ denote the quotient of $G_{tor}$ by the conjugation action of $G$; for each class $c\in G_{tor}//G$ fix a representative $g_c$. For each $g\in G$, let $Z(g)$ denote the centralizer of $G$, and let $X^g$ denote the $g$-fixed points.  Define
$$
\widehat{HH}^*_{G,k} (Z,X):=\bigoplus_{c\in G_{tor}//G}  HH^*_{Z(g_c),k}(Z^{g_c},X^{g_c})
$$
\end{definition}

We now show that the groups from Definition \ref{bs hom} and \ref{rav hom} are the same.

\begin{lemma}\label{bs=r}
Let $X$ and $Y$ be locally compact and Hausdorff $G$-spaces.  Then there is a canonical identification  
$$
\widehat{HH}^*_{G,k}(Y,X)\cong H^*_{G,k,c}(Y,X)
$$
of bifunctors.
\end{lemma}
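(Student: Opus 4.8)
The plan is to reduce the identification of $\widehat{HH}^*_{G,k}(Y,X)$ with $H^*_{G,k,c}(Y,X)$ to a comparison at the level of the derived categories in which the two hyperext groups are computed, by producing a natural equivalence of the relevant module categories together with an identification of the objects $R\Gamma_c k_{\widehat Z}$ on the two sides. First I would analyze the structure of $\widehat Z = \{(z,g)\in Z\times G_{tor}\mid gz=z\}$ with its anchor map $\phi_Z:\widehat Z\to G_{tor}$: since $G_{tor}$ is discrete, $\widehat Z$ decomposes as the disjoint union $\coprod_{g\in G_{tor}} Z^g\times\{g\}$, and the $G$-action permutes the pieces according to the conjugation action on $G_{tor}$, with the piece over $g$ preserved by the centralizer $Z(g)$. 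Correspondingly, $\text{Sh}_G(\widehat Z)$ should decompose as a product over $c\in G_{tor}//G$ of the categories $\text{Sh}_{Z(g_c)}(Z^{g_c})$ (an equivariant induction/restriction equivalence: a $G$-sheaf on a $G$-orbit of components is determined by its restriction to one component together with the residual action of the stabilizer). Dually, I expect a decomposition $G\text{-}k[G_{tor}]\cong \prod_{c\in G_{tor}//G} Z(g_c)\text{-}k$ coming from the fact that $k[G_{tor}]$, with its conjugation $G$-action, breaks up as a product indexed by conjugacy classes, the factor for $c$ being $k[c]$ with the $G$-action whose stabilizer algebra is essentially $Z(g_c)$ acting trivially on $k$; a non-degenerate $G$-$k[G_{tor}]$ module then splits accordingly into a product of $Z(g_c)$-$k$ modules.

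The key steps, in order, are: (1) establish the equivalence of abelian categories $\text{Sh}_G(\widehat Z)\simeq \prod_{c}\text{Sh}_{Z(g_c)}(Z^{g_c})$, naturally in $Z$; (2) establish the matching equivalence $G\text{-}k[G_{tor}]\simeq \prod_c Z(g_c)\text{-}k$; (3) check that under these two equivalences the functor $\Gamma_c:\text{Sh}_G(\widehat Z)\to G\text{-}k[G_{tor}]$ of Definition \ref{gammac} corresponds to the product of the functors $\Gamma_c:\text{Sh}_{Z(g_c)}(Z^{g_c})\to Z(g_c)\text{-}k$ of Definition \ref{rav hom} — this is essentially the statement that compactly supported sections over a disjoint union is the product of compactly supported sections over the components, together with tracking the module structures via $\phi_Z$; (4) since all the equivalences involved are exact (kernels/cokernels are computed underlying-sheaf-wise and underlying-module-wise) and preserve injectives, pass to derived categories to get $\dcc(\text{Sh}_G(\widehat Z))\simeq \prod_c \dcc(\text{Sh}_{Z(g_c)}(Z^{g_c}))$ and $\dcc(G\text{-}k[G_{tor}])\simeq\prod_c\dcc(Z(g_c)\text{-}k)$ intertwining $R\Gamma_c$ with $\prod_c R\Gamma_c$, hence sending $R\Gamma_c k_{\widehat Z}$ to $(R\Gamma_c k_{Z^{g_c}})_c$ — here one uses that $k_{\widehat Z}$ restricted to the component $Z^{g_c}\times\{g_c\}$ is just $k_{Z^{g_c}}$; (5) conclude that $\text{Hom}$ in the big derived category is the product of $\text{Hom}$'s in the factor derived categories, which is exactly the statement $H^*_{G,k,c}(Y,X)\cong\bigoplus_c HH^*_{Z(g_c),k}(Y^{g_c},X^{g_c})=\widehat{HH}^*_{G,k}(Y,X)$ (the product over the finite-per-degree conjugacy-class index being a direct sum, as in Raven's definition; one should remark on finiteness or $G$-compactness hypotheses if the index set is infinite, or note that $\text{Ext}$ commutes with the relevant product/coproduct here).

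The main obstacle I anticipate is step (1): carefully setting up the equivalence between $G$-sheaves on $\widehat Z$ and the product of $Z(g_c)$-sheaves on the fixed-point sets, naturally in $Z$ and compatibly with the étale-space definition of $G$-sheaf used in the paper. One has to be careful that $\widehat Z$ is not simply $Z\times G_{tor}$ but the closed subspace cut out by $gz=z$; the map $\widehat Z\to G_{tor}$ is $G$-equivariant but the fibers are the various fixed-point sets, which are genuinely different spaces, so the decomposition is an "equivariant disjoint union over an orbit space" statement requiring the standard induction equivalence $\text{Sh}_G(G\times_H W)\simeq\text{Sh}_H(W)$ applied component-orbit by component-orbit, and then one must verify this is compatible with the module structure over $k[G_{tor}]$ defined via $\phi_Z$ (step (3)). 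A secondary subtlety is bookkeeping the distinction, flagged in the excerpt, between $k[G_{tor}]$ (pointwise multiplication) and the group ring: it is the pointwise structure that makes $k[G_{tor}]$ decompose as a product of copies of $k$ indexed by points of $G_{tor}$, which is precisely what makes step (2) work, so I would make that point explicitly. Once steps (1)–(3) are in place, steps (4)–(5) are formal consequences of the fact that an equivalence of abelian categories with enough injectives induces an equivalence of bounded-below derived categories compatible with derived functors.
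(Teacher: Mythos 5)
Your proposal is correct and follows essentially the same route as the paper: the paper's proof consists precisely of the decomposition of the category of nondegenerate $G$-$k[G_{tor}]$-modules over the conjugacy classes in $G_{tor}$ (via the idempotents $\chi_{g_c}$, with induction from the centralizers $Z(g_c)$ supplying the inverse equivalence), combined with the identification $\chi_{g_c}\cdot\Gamma_c(k_{\widehat{X}})\cong\Gamma_c(k_{X^{g_c}})$, after which the statement is read off from the definition of the hyperext groups. The only differences are cosmetic: the paper works purely on the module side rather than also decomposing $\text{Sh}_G(\widehat{Z})$, and it is similarly brief about the derived-category bookkeeping and the product-versus-direct-sum point over $G_{tor}//G$ that you rightly flag.
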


\begin{proof}
We claim that there is an identification of categories 
$$
G\text{-}k[G_{tor}]\cong \bigoplus_{c\in G_{tor}//G}(Z(g_c)\text{-}k[c])
$$
where the right hand side has objects (resp.\ morphisms) given by direct sums of objects (resp.\ morphisms), one from each category $Z(g_c)\text{-}k[c]$.  More explicitly, this isomorphism is given by the map defined on objects by
\begin{equation}\label{split map}
M\mapsto (\chi_{g_c}\cdot M)_{c\in G_{tor}//G}
\end{equation}
where $\chi_{g_c}\in k[c]$ is the characteristic function of the singleton $\{g_c\}\subseteq c$, and $Z(g_c)$ acts on $\chi_{g_c}\cdot M$ via the restriction of the original $G$-action; the definition on morphisms is the canonical one compatible with this.  Indeed, an inverse to the map in line \eqref{split map} is given on objects by  
$$
(M_c)_{c\in G_{tor}//G}\mapsto \bigoplus_{c\in G_{tor}//G} \text{Map}(G,M_{c})^{Z(g_c)},
$$
where the summand $\text{Map}(G,M_{c})^{Z(g_c)}$ on the right hand-side means the $Z(g_c)$-equivariant maps from $G$ to $M_{c}$ that are supported on finitely many $Z(g_c)$-cosets; we leave this to the reader to check.  Having established the claim, the lemma follows from the definition of the hyperext groups on noting that there are canonical isomorphisms $\chi_{g_c}\cdot \Gamma_c(k_{\widehat{X}})\cong\Gamma_c(k_{X^{g_c}})$.  
\end{proof}

\subsection{Isomorphism between homologies}

We now move on to the second main definition needed for the main result.  

Write $\dch(\mathcal{A})$ for the derived category of bounded below chain complexes over an abelian category $\mathcal{A}$.  

\begin{definition}\label{coinv}
Let $G$-$\Z$ denote the abelian category of $G$-$\Z$ modules, and $\Z\text{-mod}$ the abelian category of abelian groups (i.e.\ modules over $\Z$).  Let 
$$
(\cdot)_G:G\text{-mod}\to \text{$\Z$-mod},\quad A\mapsto A_G
$$ 
be the functor taking $A$ to the group of coinvariants, i.e.\ to $A_G:=A\otimes_{\Z G}\Z$.
\end{definition}

The category $G$-$\Z$ has enough projective objects (as it is the same as the category of modules over the group ring $\Z G$).  Hence the total left derived functor of $(\cdot)_G$ exists: see for example \cite[Existence Theorem 10.5.6]{Weibel:1995ty}\footnote{The reference states the existence result for left-derived functors for bounded \emph{above cochain} complexes; we are using a slight variant for bounded \emph{below chain} complexes here.}.

\begin{definition}
We write 
$$
L_G:\dch(G\text{-mod})\to \dch(\Z\text{-mod})
$$
for the total left derived functor of the coinvariant functor of Definition \ref{coinv}.
\end{definition}

Here is the key definition of this subsection.

\begin{definition}\label{ghh}
Let $A$ be a bounded below chain complex of $G$-modules.  The \emph{hyperhomology} of $G$ with coefficients in $A$ is define by 
$$
H_n(G,A):=H_n(L_G(A)).
$$
\end{definition}

\begin{remark}\label{gkmodrem}
If $k$ is a commutative unital ring, we may also consider $L_G:\dch(G\text{-}k)\to \dch(k\text{-mod})$ analogously.  The underlying hyperhomology groups have the same underlying abelian groups as in the case above: the key point is that if $A$ is a $G$-$k$ module, then $A\otimes_{kG}k\cong A\otimes_{\Z G} \Z$ as abelian groups.  Hence whichever convention we use makes no real difference; we will tend to work with $G$-$k$-modules as this is slightly more convenient for some arguments.
\end{remark}

See for example \cite[Corollary 10.5.7]{Weibel:1995ty} for a proof that Definition \ref{ghh} with the more classical notion of hyperhomology\footnote{The result of \cite[Corollary 10.5.7]{Weibel:1995ty} specializes to $H_n(G,A):=H_{-n}(L_G(A))$; we do not have the minus sign as \cite[Chapter 10]{Weibel:1995ty} works with cochain complexes and we are working with chain complexes.  Similarly, \cite[Corollary 10.5.7]{Weibel:1995ty} works with bounded above complexes, and we are working with bounded below complexes.}; the latter can be found in \cite[Example 6.1.15]{Weibel:1995ty}.

We need to recall two definitions from sheaf theory: compare \cite[Definition 2.5.5 and Proposition 2.5.6]{Kashiwara:1990aa}.

\begin{definition}\label{c-soft}
Let $Z$ be a locally compact, Hausdorff topological space.  A sheaf $F$ over $Z$ is \emph{$c$-soft} if for any compact subset $K$ of $Z$, the restriction map 
$$
\Gamma_c(Z,F)\to \lim_{U\supseteq K}\Gamma_c(U,F)
$$
(here the direct limit is taken over all open neighbourhoods $U$ of $K$) is surjective.   A $G$-sheaf over a locally compact, Hausdorff $G$-space is \emph{$c$-soft} if the underlying sheaf is $c$-soft.
\end{definition}

The following definition is based on \cite[Definition 16.3 and Theorem 16.4]{Bredon:1997aa}.

\begin{definition}\label{cdim}
Let $Z$ be a locally compact Hausdorff topological space and let $k$ be a unital commutative ring.  The \emph{$c$-$k$-cohomological dimension} of $Z$ is the smallest integer $m$ such that every sheaf of $k$-modules on $Z$ admits a $c$-soft resolution of length at most $m$ (and infinity if no such $m$ exists).  We write $c$-$k$-$\text{dim}(Z)$ for this dimension.
\end{definition}

We will discuss some examples of this in Remark \ref{dim rem} below.

Finally, we need one more definition due to Baum and Schneider \cite[page 316]{Baum:2002aa}.

\begin{definition}\label{bs!}
Let $X$ be a locally compact, Hausdorff $G$-space, and let $Z$ be a proper Hausdorff $G$-space (not necessarily locally compact).  Define
$$
H^*_{G,k,!}(Z,X):=\lim_{Y\subseteq X}H_{G,k,c}^*(Y,X)
$$
where the groups on the right are as in Definition \ref{bs hom}, and the limit is taken over all $G$-compact (therefore locally compact) $G$-invariant subspaces $Y$ of $Z$.
\end{definition}

Here is the main result of this subsection.  The special case that $k=\C$ follows from combining work of Schneider \cite[Section 4]{Schneider:1998aa} and Baum-Schneider \cite[Section 1.B]{Baum:2002aa}

\begin{proposition}\label{bcr and gh}
Let $X$ be a locally compact, Hausdorff $G$-space with finite $c$-$k$-cohomological dimension, and let $\widehat{X}$ be as in Definition \ref{z hat}.  Let 
$$
0\to k_{\widehat{X}}\to I^0\to I^1\to \cdots \to I^m\to 0
$$
be a finite-length resolution of $k_{\widehat{X}}$ by $c$-soft $G$-sheaves of $k$-modules\footnote{Such a resolution always exists: see Remark \ref{dim rem} below.}, and let $\Gamma_c(I^{-\bullet})$ denote the induced chain complex 
$$
\Gamma_c(\widehat{X},I^m)\leftarrow \cdots \leftarrow \Gamma_c(\widehat{X},I^0)
$$
of $G$-modules, where $\Gamma_c(\widehat{X},I^m)$ appears in degree $-m$ and $\Gamma_c(\widehat{X},I^0)$ in degree zero.  Let $k$ be a Pr\"{u}fer domain\footnote{See Remark \ref{k rem} below for more about what this assumptions means, and examples.} such that $n^{-1}$ exists in $k$ whenever there is an order $n$ stabilizer of a point $x\in X$.  Let $\underline{E}G$ be the universal $G$-space for proper actions.  

Then for each $n\in \Z$, there is a canonical isomorphism
\begin{equation}\label{bsr iso}
H^n_{G,k,!}(\underline{E}G,X)\cong H_{-n}(G,\Gamma_c(I^{-\bullet}))
\end{equation}
between the Baum-Schneider bivariant cohomology groups and the group hyperhomology with coefficients in the $G$-$k$ chain complex $\Gamma_c(I^{-\bullet})$.
\end{proposition}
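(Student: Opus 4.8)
The plan is to make both sides of \eqref{bsr iso} completely explicit, using a $G$-CW model for $\underline E G$ together with cellular $c$-soft resolutions, and then to match the resulting complexes term by term. The first move is to dispose of the coefficient side. The equivariant analogue of the classical fact that $c$-soft sheaves are acyclic for $\Gamma_c$ holds here: injective $G$-sheaves are $c$-soft as sheaves (cf.\ \cite{Grothendieck:1957aa,Schneider:1998aa}), the functor $\Gamma_c$ is exact on short exact sequences of $c$-soft $G$-sheaves since this can be checked on underlying sheaves, and the forgetful functor $G\text{-}k[G_{tor}]\to k\text{-mod}$ is exact and conservative. Hence $R\Gamma_c k_{\widehat X}\simeq \Gamma_c(\widehat X,I^\bullet)$ in $\dcc(G\text{-}k[G_{tor}])$, so for every $G$-compact $G$-invariant $Y\subseteq \underline E G$ we get $H^*_{G,k,c}(Y,X)=\text{Ext}^*_{G\text{-}k[G_{tor}]}(R\Gamma_c k_{\widehat Y},\Gamma_c(\widehat X,I^\bullet))$, a complex that is bounded because $I^\bullet$ has finite length.

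Second, fix a $G$-CW model of $\underline E G$ (its isotropy groups are then exactly the finite subgroups of $G$). Its finite $G$-subcomplexes are cofinal among the $G$-compact $G$-invariant subspaces, so $H^*_{G,k,!}(\underline E G,X)=\varinjlim_Y \text{Ext}^*_{G\text{-}k[G_{tor}]}(R\Gamma_c k_{\widehat Y},\Gamma_c(\widehat X,I^\bullet))$, the transition maps being induced by restriction. For each finite $Y$, the space $\widehat Y=\bigsqcup_{g\in G_{tor}}Y^g\times\{g\}$ inherits a $G$-CW structure (each $Y^g$ a subcomplex), and its cellular $c$-soft resolution identifies $R\Gamma_c k_{\widehat Y}$ with the compactly supported cellular cochain complex $C^\bullet_c(\widehat Y;k)$: a bounded complex of $G$-$k[G_{tor}]$-modules whose $i$-th term is a \emph{finite} direct sum $\bigoplus_{[\sigma,g]}k[G/K_{\sigma,g}]$ over $G$-orbits of $i$-cells, with finite stabilizer $K_{\sigma,g}$, placed in the $g$-component for the $k[G_{tor}]$-action.

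The technical heart is that $\Gamma_c(\widehat X,I^\bullet)$ is acyclic against $k[G/K]$ for every finite $K\le G$, both for $\text{Tor}$ and for $\text{Ext}$. Such a $K$ acts on $\widehat X$ with all point-stabilizers contained in point-stabilizers of $X$, hence of order invertible in $k$; a slice/Mayer--Vietoris argument --- in which the Pr\"ufer hypothesis is used to keep the relevant modules of sections flat over $k$ --- exhibits $\Gamma_c(\widehat X,I^j)$, as a $kK$-module, as an iterated extension of modules $\text{Ind}_{K'}^{K}(\cdot)$ with $|K'|\in k^\times$, whence both $H_*(K;\Gamma_c(\widehat X,I^j))$ and $H^*(K;\Gamma_c(\widehat X,I^j))$ are concentrated in degree $0$ (by Shapiro's lemma, using $\text{Ind}=\text{Coind}$ for finite index). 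Consequently $R\Hom_{G\text{-}k[G_{tor}]}(C^\bullet_c(\widehat Y),\Gamma_c(\widehat X,I^\bullet))$ is computed by the \emph{underived} $\Hom$-complex; and using $k[G_{tor}]$-linearity together with the splitting $G\text{-}k[G_{tor}]\cong\bigoplus_{c}Z(g_c)\text{-}k[c]$ of Lemma \ref{bs=r} and the averaging isomorphism $N^{K'}\cong N_{K'}$ (valid since $|K'|\in k^\times$), I would identify this $\Hom$-complex --- naturally, and with the degree reversal forced by the $\Hom$ bicomplex --- with $\bigoplus_{c\in G_{tor}//G}\big(C_*(Y^{g_c};k)\otimes_{kZ(g_c)}\Gamma_c(I^{-\bullet})|_{X^{g_c}}\big)$, where $C_*$ is the ordinary cellular chain complex and $\Gamma_c(I^{-\bullet})|_{X^{g_c}}$ denotes $I^\bullet$ restricted to the fibre $X^{g_c}$ of $\widehat X\to G_{tor}$, reindexed to degrees $-m,\dots,0$.

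Cellular chains commute with the filtered colimit $\widehat Y\nearrow\widehat{\underline E G}$, so passing to the limit gives $\bigoplus_c C_*((\underline E G)^{g_c};k)\otimes_{kZ(g_c)}\Gamma_c(I^{-\bullet})|_{X^{g_c}}$. Since $(\underline E G)^{g_c}$ is a contractible proper $Z(g_c)$-CW complex, $C_*((\underline E G)^{g_c};k)\to k$ is a resolution of the trivial $kZ(g_c)$-module by modules $\bigoplus k[Z(g_c)/K']$; the acyclicity of the preceding paragraph makes each term $\text{Tor}$-acyclic against the coefficient complex, so $C_*((\underline E G)^{g_c})\otimes_{kZ(g_c)}\Gamma_c(I^{-\bullet})|_{X^{g_c}}\simeq k\otimes^{\mathbf L}_{kZ(g_c)}\Gamma_c(I^{-\bullet})|_{X^{g_c}}$ and hence computes the hyperhomology $H_{-n}(Z(g_c),\Gamma_c(I^{-\bullet})|_{X^{g_c}})$. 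Finally $\Gamma_c(\widehat X,I^j)\cong\bigoplus_c \text{Ind}_{Z(g_c)}^{G}\Gamma_c(X^{g_c},I^j|_{X^{g_c}})$ as $G$-modules, so Shapiro's lemma for hyperhomology reassembles the sum over $c$ into $H_{-n}(G,\Gamma_c(I^{-\bullet}))$; chaining the identifications produces the asserted canonical isomorphism. The step I expect to be the real obstacle is the acyclicity of $\Gamma_c(\widehat X,I^\bullet)$ against all finite subgroups --- the slice argument converting pointwise invertibility of stabilizer orders into the module-theoretic $\text{Ind}_{K'}^{K}$ structure, together with the precise use of the Pr\"ufer hypothesis --- and, hand in hand with it, carrying out the term-by-term $\Hom$-to-$\otimes$ identification (with its signs and degree shifts) functorially enough to produce a well-defined isomorphism rather than an ad hoc one; the $k=\C$ case that can be pieced together from \cite{Schneider:1998aa,Baum:2002aa} serves as a consistency check but not a shortcut, which is presumably why the authors defer the argument to an appendix.
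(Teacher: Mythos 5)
Your skeleton largely parallels the appendix: pass to the colimit over $G$-finite subcomplexes $Y\subseteq \underline{E}G$, identify $R\Gamma_ck_{\widehat{Y}}$ with a cellular complex built from the finite cell stabilizers (the paper's Lemma \ref{g-fin}), represent $R\Gamma_ck_{\widehat{X}}$ by $\Gamma_c(\widehat{X},I^\bullet)$, and do the centralizer/fixed-point bookkeeping of Lemma \ref{bs=r} and Proposition \ref{Rem:ComHGamma}. The divergence, and the gap, is exactly at the step you flag as the ``technical heart''. Your route needs (a) vanishing of $H_{i}(K;\Gamma_c(\widehat{X},I^j))$ and $H^{i}(K;\Gamma_c(\widehat{X},I^j))$ for $i>0$ and \emph{all} finite $K\leq G$, and (b) the norm isomorphism $N^{K'}\cong N_{K'}$ for the cell stabilizers $K'=G_\sigma\cap Z(g)$ to convert the $\Hom$-complex into the tensor complex. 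Neither is proved: (a) is dispatched by an unexecuted ``slice/Mayer--Vietoris argument'', and (b) is justified by ``$|K'|\in k^\times$'' --- but the cell stabilizers of $\underline{E}G$ run over essentially arbitrary finite subgroups of $G$, and the hypothesis only inverts orders attached to stabilizers of points of $X$; for instance $G$ may contain a finite subgroup none of whose nontrivial elements fixes any point of $X$, and then nothing inverts $|K'|$. The correct repair is that $\Gamma_c(X^g,I^j)$ is a \emph{cohomologically trivial} $K'$-module (its point stabilizers $K'\cap G_x$ do lie in stabilizers of points of $X$, hence have invertible order, and a genuine slice/Mayer--Vietoris induction over invariant tubes then gives Tate vanishing, whence the norm map is an isomorphism) --- but that induction is precisely the part you left as a sketch, so the heart of the argument is both missing and, as written, mis-justified.

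It is also not how the paper closes the argument, which is worth knowing if you want to complete your version. For the $\mathrm{Ext}$ computation the paper never uses acyclicity of the coefficient side: it proves the first-variable complex is projective in $G$-$k[G_{tor}]$-mod (Lemmas \ref{adj fun} and \ref{proj 1}) and computes with the underived $\Hom$ double complex (Lemma \ref{hom cor}). For the comparison with $H_*(G,\Gamma_c(I^{-\bullet}))$ it does not invoke contractibility of $(\underline{E}G)^{g_c}$ with acyclic coefficients; instead it uses the explicit infinite-join model of Definition \ref{simp eg}, shows the entries of the resulting double complex are flat $G$-$k$-modules (this is where the Pr\"{u}fer hypothesis enters, via Lemma \ref{rgcx}\eqref{rgcx 2}, Lemma \ref{co inv} and Lemma \ref{g-mod proj}), proves exactness of the augmented rows by an explicit chain contraction built from a clopen partition of $\widehat{X}$ (Lemma \ref{g mod lem}), and applies the acyclic assembly lemma. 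In particular the Pr\"{u}fer assumption does not play the role you assign it (flatness over $k$ is irrelevant to acyclicity of section modules over finite groups with invertible point-stabilizer orders); the fact that your sketch gives it no load-bearing job is a symptom that the step where the hypotheses actually bite has not been worked out.
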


\begin{remark}\label{k rem}
Let us say a little about the assumptions on $k$ in Proposition \ref{bcr and gh}.  A Pr\"{u}fer domain is a (commutative, unital) integral domain such that all torsion free modules are flat.  The most important examples for us include fields (such as $\Q$ and $\C$), and PIDs such as $\Z$ and its localizations (such as $\Z[1/2]$).  

Schneider \cite{Schneider:1998aa} and Baum-Schneider \cite{Baum:2002aa} work only with $k=\C$ (but point out that their results really just need a field of characteristic zero).  It seems useful to consider other rings; however, the most interesting case is $k=\Z$, which satisfies the assumptions of Proposition \ref{bcr and gh} if (and only if) the stabilizers for the $G$-action on $X$ are all torsion free, and therefore in particular for all actions on appropriate spaces when $G$ is torsion-free. 

On the other hand, the isomorphism in line \eqref{bsr iso} can fail for $k=\Z$ if there are torsion stabilizers.  Indeed, let $G=C_m$ be a finite cyclic group of order $m$, $k=\Z$, and $X$ be a single point.  First, note that $\widehat{X}=G_{tor}=G$, with the $G$ action being by conjugation (so trivial, as $G$ is abelian).   Then $k_{\widehat{X}}$ is itself $c$-soft, and $\Gamma_c(\widehat{X},k_{\widehat{X}})$ is just $\Z^{|G|}$ with the trivial $G$-action.  Hence 
\begin{equation}\label{gp hom}
H_n(G,\Gamma_c(\widehat{X},k_{\widehat{X}}))\cong H_n(G,\Z)^{|G|}\cong \left\{\begin{array}{ll} \Z & n=0 \\ (\Z/m)^m & n>0 \text{ even} \\ 0 & \text{otherwise} \end{array}\right.
\end{equation}
(see for example \cite[page 35]{Brow:1982rt} for a computation of the group homology of $C_m$).  On the other hand, for finite $G$, $\underline{E}G$ can be taken to be a single point, and the isomorphism in Lemma \ref{bs=r} specializes to
$$
H^*_{G,k,!}(\underline{E}G,X)=\bigoplus_{G_{tor}} \text{Ext}^*_{G\text{-}k}(R\Gamma_ck_{pt},R\Gamma_c k_{pt}).
$$
For a single point the sheaf $k_{pt}$ is $c$-soft, whence by Lemma \ref{f-inj} below $R\Gamma_ck_{pt}=k$, where here ``$k$'' is interpreted as an element of the derived category of $G$-$k$ modules, i.e.\ the chain complex of $G$-$k$ modules that equals $k$ in degree zero and zero in all other degrees, equipped with the trivial $G$-action.  Note that for $k=\Z$, $\text{Ext}^*_{G\text{-}k}(k,k)$ is (essentially by definition -  see for example \cite[Section III.2]{Brow:1982rt}) the group cohomology $H^*(G,\Z)$ of $G$.  Hence 
\begin{equation}\label{gp coh}
H^n_{G,!}(\underline{E}G,X)=\bigoplus_{G_{tor}} \text{Ext}_{G\text{-}k}^n(k,k)=H^n(G,\Z)^{|G|}\cong  \left\{\begin{array}{ll} \Z & n=0 \\  (\Z/m)^m  & n>0 \text{ odd} \\ 0 &  \text{otherwise}  \end{array}\right.
\end{equation}
(see for example \cite[Example 2 on page 58]{Brow:1982rt} for a computation of the group cohomology of $C_m$ in terms of the group homology).  The computations in lines \eqref{gp hom} and \eqref{gp coh} show that the isomorphism in line \eqref{bsr iso} does not hold in this case.  Note however that it is true if $k=\Z[1/m]$ (as predicted by the proposition!), as this has the effect of tensoring all the groups in lines \eqref{gp hom} and \eqref{gp coh} by $\Z[1/m]$, and thus all the $m$-torsion vanishes.
\end{remark}

\begin{remark}\label{dim rem}
Let us also say a little about the assumption that $X$ has finite $c$-$k$-cohomological dimension, which says that any sheaf on $X$ admits a finite-length resolution by $c$-soft sheaves.  We show in Lemma \ref{rgcx} below that finite $c$-$k$-cohomological dimension implies the existence of a finite length resolution by $c$-soft $G$-sheaves as needed for the statement of the proposition (compare also \cite[3.3]{CrainicMoerdijk2000aa}).  We make this assumption as we work in the framework of derived categories and derived functors, and this works best\footnote{It is, however, conceivable to us that the more classical approach to hyperhomology as exposited in \cite[Chapter XVII]{Cartan:1956aa} or \cite[Section 5.7]{Weibel:1995ty} would enable the results to be carried through without this assumption; we did not seriously pursue that here.} with bounded below complexes (see for example \cite[Section 10.5, particularly Corollary 10.5.7]{Weibel:1995ty}).

Let us say also a little about geometric assumptions implying finiteness of $c$-$k$-cohomological dimension.  Recall that the \emph{covering dimension} of a topological space $X$ is the smallest natural number $m$ such that every open cover of $X$ admits a refinement where at most $m+1$ sets intersect (and infinity if no such $m$ exists).  Let us write ``$c$-$\text{dim}(X)\leq m$'' if the covering dimension of every compact subset of $X$ is at most $m$\footnote{This is strictly weaker than having covering dimension at most $m$ for general locally compact Hausdorff spaces: for example the \emph{long line} of \cite[Counterexample 46]{Steen:1970aa} satisfies $c$-$\text{dim}(X)=1$, but is not paracompact, so has infinite covering dimension.  It is also strictly weaker than the corresponding notions one gets by considering the so-called small or large inductive dimension of compact subspaces: see \cite[Theorems 3.1.28 and 3.1.29, and Example 3.1.31]{Engelking:1978aa}.  On the other hand, for `reasonable' spaces, all these notions of dimension coincide: compare for example \cite[Theorem 1.7.7]{Engelking:1978aa}.}.  Then thanks to the isomorphism in \cite[Corollary III.4.12]{Bredon:1997aa}, we see that the sheaf cohomology of every sheaf on every compact subset of $X$ vanishes in dimensions above $m$.  Using \cite[Theorem II.16.4 and Proposition II.16.7]{Bredon:1997aa}, this implies that the $c$-$k$-cohomological dimension of $X$ is at most $m$.  

If we put more assumptions on $X$, then we can say more.  For example, assume that $X$ is a smooth $m$-manifold with a smooth $G$-action.  This implies that $c$-$\text{dim}(X)\leq m$.  Note moreover that $\widehat{X}$ is also a manifold (with components of possibly varying dimension).  We may take $I^\bullet:=\Omega^\bullet_{\widehat{X}}$ to be the complex of smooth differential forms on $\widehat{X}$, equipped with the de Rham differential and induced $G$-action.  This is then a resolution of $k_{\widehat{X}}$ of length at most $m$ by $c$-soft $G$-sheaves.

As another especially interesting case, note that $c$-$\text{dim}(X)=0$ for a locally compact Hausdorff space $X$ if and only if the topology on $X$ has a basis of compact open sets\footnote{This is again strictly weaker than having covering dimension zero in general: \cite[Counterexample 65]{Steen:1970aa} is (locally compact, Hausdorff) and has a basis of compact open sets, but does not have covering dimension zero (in fact, that space cannot be written as a disjoint union of compact open sets at all).}.  This is equivalent to various other notions of `total disconnectedness' such as the condition that all connected components are single points: see for example \cite[Theorem 1.4.5]{Engelking:1978aa}.  One can then check directly that $k_{\widehat{X}}$ is $c$-soft itself (compare \cite[Proposition 2.8]{Proietti:2021wz}), or appeal to \cite[Corollary II.16.38]{Bredon:1997aa} for this, as above.  Hence in the zero-dimensional case, Proposition \ref{bcr and gh} reduces to an isomorphism 
$$
H^n_{G,k,!}(\underline{E}G,X)\cong H_{-n}(G,\Gamma_c(\widehat{X},k_{\widehat{X}})),
$$ 
where the right hand side is just the usual (non-hyper!)\ group homology with coefficients in the $G$-module of compactly supported and continuous functions from $\widehat{X}$ to $k$.  If moreover all stabilizers of points in $X$ are torsion free, then the right hand side becomes $H_{-n}(G,\Gamma_c(X,k_X))$ and (up to multiplying degrees by $-1$) this agrees with the usual groupoid homology of the transformation groupoid $G\ltimes X$ with coefficients in $k$ as appearing in the HK conjecture.
\end{remark}

The proof of Proposition \ref{bcr and gh} is rather long.  Moreover, it is entirely within the realm of derived categories and equivariant sheaf theory, and we imagine most of our potential readers have background in $C^*$-algebra $K$-theory and may be happier treating Proposition \ref{bcr and gh} as a black box.  For these two reasons, we postpone the argument to Appendix \ref{hh app}.

\subsection{Raven's Chern character}

The following result is \cite[Corollary 7.3.12]{Raven:2004aa}.  

\begin{theorem}[Raven]\label{raven chern}
Let $G$ be a countable discrete group, let $X$ be a second countable, locally compact $G$-space, let $Y$ be a $G$-finite and proper $G$-CW complex, and let $\widehat{HH}_{G,k}^{*}(Y,X)$ be as in Definition \ref{rav hom}.  Then there is a ``Chern-Raven'' character  
$$
ch_R:KK^G_*(C_0(Y),C_0(X))\otimes \C\to \widehat{HH}_{G,k}^{**}(Y,X)
$$ 
that is natural for equivariant proper maps in either variable, and an isomorphism.   \qed
\end{theorem}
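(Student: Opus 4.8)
The plan is to follow the standard template for showing that a natural transformation between (co)homology theories is a rational isomorphism: fix the second variable, realise both sides as $\Z/2$-graded $G$-equivariant cohomology theories in the first variable $Y$, construct the transformation $ch_R$ on the category of finite proper $G$-CW complexes, and reduce the isomorphism claim via a cell-induction (equivalently, a spectral-sequence comparison) to the orbit cells $G/F$ with $F\le G$ finite, where one invokes the classical delocalised equivariant Chern character for finite groups. Throughout one uses the identification of Raven's codomain provided by Definition \ref{rav hom} and Lemma \ref{bs=r}, so that $\widehat{HH}^{**}_{G,k}(Y,X)$ is computed in terms of the fixed-point bivariant theories $HH^{**}_{Z(g_c)}(Y^{g_c},X^{g_c})$.

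First I would fix a second countable, locally compact $G$-space $X$ and regard $Y\mapsto KK^G_*(C_0(Y),C_0(X))\otimes\C$ and $Y\mapsto\widehat{HH}^{**}_{G,k}(Y,X)$ as contravariant functors on finite proper $G$-CW complexes. I would check that each is a $\Z/2$-graded $G$-equivariant cohomology theory: homotopy invariance (for $KK$ this is homotopy invariance of Kasparov theory together with Bott periodicity to collapse to two degrees; for $\widehat{HH}$ it follows because each summand is a sheaf-hyperext group over a fixed-point set, hence homotopy invariant), a Mayer--Vietoris sequence for $G$-invariant pushouts/open covers, compatibility with filtered colimits, and additivity over disjoint unions of orbits. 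For $\widehat{HH}$ these reduce, via Definition \ref{rav hom}, to the long exact sequences of the bivariant sheaf-hyperext groups on the fixed sets, so no new input is needed beyond ordinary equivariant sheaf theory.

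Next I would construct $ch_R$. The essential ingredient is the ordinary (non-equivariant) Chern character together with the fact that equivariant $KK$ over a proper $G$-space "delocalises": given a torsion conjugacy class $c\in G_{tor}//G$ with representative $g_c$, restriction of an equivariant cycle to the $g_c$-fixed set and passage to $Z(g_c)$-equivariance, followed by the non-equivariant Chern character of the resulting class over the orbit space, lands in $HH^{**}_{Z(g_c)}(Y^{g_c},X^{g_c})$; summing over $c$ gives $ch_R$, and naturality for equivariant proper maps in either variable and multiplicativity are bookkeeping on top of the functoriality already present in both theories. With $ch_R$ a map of $G$-equivariant cohomology theories in $Y$, both sides carry equivariant Atiyah--Hirzebruch/Bredon spectral sequences over the orbit category of finite subgroups, with $E_2$-terms the Bredon cohomology of $Y$ with coefficients $F\mapsto KK^F_*(\C,C_0(X))\otimes\C$ and $F\mapsto\widehat{HH}^{**}_{F,k}(\mathrm{pt},X)$ respectively; $ch_R$ induces a map of spectral sequences, so by the comparison theorem it suffices to prove $ch_R$ is an isomorphism on $Y=G/F$. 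There the claim becomes that $KK^F_*(\C,C_0(X))\otimes\C\to\bigoplus_{[g]}HH^{**}_{Z_F(g)}(\mathrm{pt},X^g)$ is an isomorphism, which is exactly the rational decomposition of finite-group equivariant $K$-homology over conjugacy classes into the $Z_F(g)$-invariants of the (sheaf) cohomology of the fixed sets $X^g$ — a theorem in the circle of Baum--Connes \cite{Baum:1994pr}, Kawasaki, and L\"uck--Oliver, and precisely the principle that motivated the definition of $\widehat{HH}$.

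The step I expect to be the main obstacle is handling the genuinely bivariant character of the statement: because $X$ is only a locally compact $G$-space, one cannot induct in that variable, so every construction above — the Mayer--Vietoris sequences, the spectral sequences, the transformation $ch_R$ — must be made natural and exact in $X$ as well, which in practice forces one to work with the derived/sheaf-theoretic codomain of Raven \cite{Raven:2004aa} rather than a naive cocycle model, and to match Raven's coefficient sheaves with the fixed-point data carefully (the same bookkeeping carried out for Proposition \ref{bcr and gh} and Lemma \ref{bs=r}). A secondary difficulty is verifying that the spectral sequence comparison is legitimate at the level of the specific model $\widehat{HH}$, i.e.\ that its $E_2$-page really is the claimed Bredon cohomology with the stated coefficient system; once that is in place the finite-group input closes the argument.
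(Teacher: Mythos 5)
There is a genuine gap, and it sits exactly at the step you dismiss as ``bookkeeping'': the construction of $ch_R$ itself. You propose to obtain it by ``restriction of an equivariant cycle to the $g_c$-fixed set and passage to $Z(g_c)$-equivariance, followed by the non-equivariant Chern character''. But a Kasparov cycle for $KK^G_*(C_0(Y),C_0(X))$ is a Hilbert $C_0(X)$-module with a $C_0(Y)$-representation and an operator, and there is no canonical way to localize such a cycle over the fixed sets $Y^{g_c}$ and $X^{g_c}$; for $g_c\neq e$ the delocalized component involves a character/Lefschetz-type localization at the group element, not a naive restriction, and making this well defined on classes, compatible with the Kasparov product structure, and natural in a completely general second countable locally compact second variable $X$ is precisely the hard content of the theorem. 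This is why the existing proofs all pass through heavy machinery: Raven \cite{Raven:2004aa} (which is what the paper actually cites -- Theorem \ref{raven chern} is not proved here but quoted as \cite[Corollary 7.3.12]{Raven:2004aa}) first builds a geometric Baum--Douglas-type model $tKK^G$ out of $G$-spin$^{\text{c}}$ manifolds, where fixed-point data is visible on cycles, and only then applies classical Chern characters; Voigt \cite{Voigt:2009aa} goes through equivariant local cyclic homology; and the Bredon-theoretic route you are sketching is exactly the L\"{u}ck-style approach that Remark \ref{cc rems} records as plausible but \emph{not available in the literature} in the bivariant setting, for want of precisely this transformation. Without an actual natural transformation in hand, your spectral-sequence (or cell-induction) comparison has nothing to compare, and one cannot conjure it from the comparison itself without circularity.

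The reduction scaffolding also needs repair, though these points are fixable. Neither side is homotopy invariant in the first variable: $KK_*(C_0(\R),\C)$ and the compactly supported sheaf-theoretic groups $\text{Ext}^*(R\Gamma_ck_{\R},\cdot)$ are both shifted relative to the point, so ``homotopy invariance'' cannot be an ingredient; the correct induction over the $G$-CW structure uses the exact sequences coming from $0\to C_0(Y_i\setminus Y_{i-1})\to C_0(Y_i)\to C_0(Y_{i-1})\to 0$ (as in Step 1 of Appendix \ref{hh app} on the sheaf side) together with suspension/Bott isomorphisms to handle the open cells $G/F\times \R^i$, and since $Y$ is $G$-finite a five-lemma induction suffices without invoking a spectral-sequence comparison theorem. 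Finally, the orbit-level identification needs to be stated carefully: one must match $KK^G_*(C_0(G/F),C_0(X))\cong KK^F_*(\C,C_0(X))$ with the corresponding Shapiro-type decomposition of $\widehat{HH}^*_{G,k}(G/F,X)$ into $HH^*_{Z_F(g)}(\text{pt},X^g)$ (in the spirit of Lemma \ref{bs=r} and Proposition \ref{Rem:ComHGamma}), and the finite-group input for a general second countable locally compact $X$ should be cited in the precise bivariant sheaf-theoretic form of Baum--Schneider \cite{Baum:2002aa} rather than the compact-space delocalization theorems; none of this is hard, but as written it is asserted rather than proved, and in any case it is downstream of the missing construction of $ch_R$.
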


\begin{remark}\label{cc rems}
Raven's proof of Theorem \ref{raven chern} proceeds by constructing an isomorphism between $KK^G$-theory and Baum-Douglas topological $KK^G$-theory (denoted $tKK^G$ by Raven - see \cite[Definition 4.3.1]{Raven:2004aa}): see \cite[Corollary 4.7.9]{Raven:2004aa}\footnote{Isomorphisms between Baum-Douglas and Kasparov models for $K$-homology and more generally $KK$-theory are also considered in \cite{Baum:2009hq, BaumOyonoSchickWalter2010aa, EmersonMeyer2010aa, Jakob1998aa, Jakob2000aa}.  Raven's version is the only one that is strong enough to be used directly for our purposes, but it seems that with some work the results in \cite{EmersonMeyer2010aa} could be used.}.  Cycles for the latter theory are given by $G$-spin$^{\text{c}}$ manifolds with some additional data, and  Raven is then able to use versions of the classical $K$-homology Chern character for spin$^{\text{c}}$ manifolds to build the map $ch_R$ from Theorem \ref{raven chern}.  Let us make some remarks on the connection of this result to other (bivariant, equivariant) Chern characters in the literature.   

First, Baum and Schneider \cite[Section 3, Corollary 5]{Baum:2002aa} establish an analogue of Theorem \ref{raven chern} where $G$ is a profinite group.  The proof is quite different to Raven's: it first establishes an isomorphism 
$$
KK^G_*(C_0(Y),C_0(X))\cong \text{Hom}_{G\text{-}\C[G]}(K^*(\widehat{Y})\otimes \C,K^*(\widehat{X})\otimes \C)
$$
for profinite $G$, where the $G$-action of $\C[G]$ is by conjugation (see \cite[Section 3, Proposition 4]{Baum:2002aa}).  Baum and Schneider then use the classical Chern character isomorphism $K^*(\widehat{X})\otimes \C\cong H^{**}(\widehat{X},\C)$ to identify the right hand side with an appropriate hom-group with $K$-theory replaced with cohomology.  This hom group is identified with $H^*_{G,\C,c}(Y,X)$ in \cite[Section 1.E]{Baum:2002aa}.  There does not seem to be a reasonable analogue of this process for non-compact groups $G$, so it is not suitable for our purposes.

Second, Voigt \cite[Theorem 6.6]{Voigt:2009aa} establishes an analog of Theorem \ref{raven chern} where $G$ is allowed to be any second countable, totally disconnected, locally compact group, and $X$ is a locally finite and finite-dimensional $G$-simplicial complex.  Voigt's argument is quite different to Raven's: Voigt starts with the Chern character from $KK^G$ to bivariant equivariant local cyclic homology $HL^G$ that he constructs in \cite{Voigt:207aa}, and then proceeds to directly compute (using other earlier results  \cite{Voigt:2008aa}) that under his assumptions $HL^G$ agrees with the bivariant equivariant homology theory constructed by Baum and Schneider from Definition \ref{bs hom}.  For us, we need the second variable $X$ in Theorem \ref{raven chern} to be allowed to be more general than a $G$-simplicial complex.  Voigt informed us that his Chern character should extend to more general spaces in the second variable: those for which an appropriate 'smooth' subalgebra of the continuous functions exists, such as totally disconnected spaces or manifolds.  However, the details of this are not recorded in the literature, so we use Raven's version here.

Third, L\"{u}ck \cite{Luck:2002aa} constructs a very general equivariant Chern character using methods from algebraic topology (in particular, Bredon homology theories).  L\"{u}ck shows that his Chern character is an isomorphism for many proper equivariant homology theories under natural hypotheses; these apply in particular to equivariant $K$-homology of proper spaces.  The published work on this is only explicitly for single variable theories; however, L\"{u}ck pointed out to us that extending his single variable Chern character to the bivariant Chern case is possible, using naturality in the associated homology theory.  A detailed discussion of the bivariant homology theory that would form the domain for such a bivariant Chern character and also its identification with $KK$-theory under appropriate assumptions were carried out by Mitchener \cite{Mitchener:2004aa}\footnote{Kranz points our that there might be inconsistencies in part of Mitchener's work: see \cite[page 510]{Kranz:2021aa}.} and Kranz \cite{Kranz:2021aa}.  However, an explicit discussion of the bivariant Chern character itself in this language is again missing from the literature.

\end{remark}

If $Z$ has a cofinal family of $G$-compact and second countable $G$-invariant subspaces $Y$, and if $X$ is second countable, then the representable $KK$-group is defined by
$$
RKK^G_*(C_0(Z),C_0(X)):=\lim_{Y\subseteq Z}KK^G_*(C_0(Y),C_0(X)),
$$
where the limit is now restricted to second countable $Y$ (compare \cite[Definition 2.22]{Kasparov:1988dw} and \cite[Definition 5.1 and following paragraph]{Kasparov:2003cf}).  We get the following variant on Theorem \ref{raven chern}.

\begin{corollary}\label{lim cor}
Let $G$ be a countable discrete group, let $\underline{E}G$ be its classifying space for proper actions realized as a $G$-CW complex (see for example \cite[page 6]{Mislin:2003lr}), let $X$ be a second countable, locally compact, Hausdorff $G$-space, and let $H_{G,\C,!}^{*}(\underline{E}G,X)$ be as in Definition \ref{bs!}.  Then there is a Chern character isomorphism
$$
ch_R:RKK^G_*(C_0(\underline{E}G),C_0(X))\otimes \C\to H_{G,\C,!}^{**}(\underline{E}G,X).
$$
\end{corollary}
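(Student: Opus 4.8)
The plan is to deduce Corollary \ref{lim cor} from Raven's Theorem \ref{raven chern} and Lemma \ref{bs=r} by a direct-limit argument over the $G$-finite subcomplexes of $\underline{E}G$. Since $\underline{E}G$ is given as a $G$-CW complex and $G$ is countable, it is the increasing union of its $G$-finite $G$-CW subcomplexes $Y$; these form a directed system under inclusion, each such $Y$ is $G$-compact, second countable (finitely many orbits of cells, $G$ countable), and proper (its cell stabilizers are also cell stabilizers of $\underline{E}G$, hence finite), and the inclusions $Y_1\hookrightarrow Y_2$ are equivariant proper maps. A standard argument shows that every $G$-compact $G$-invariant subspace of $\underline{E}G$ is contained in one of these subcomplexes (pass to the compact quotient by $G$, which meets finitely many orbits of cells, and take the $G$-subcomplex they generate). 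Hence the $G$-finite $G$-CW subcomplexes are cofinal among the subspaces appearing in the defining direct limits both of $RKK^G_*(C_0(\underline{E}G),C_0(X))$ (in particular this shows that group is defined) and of $H^*_{G,\C,!}(\underline{E}G,X)$ from Definition \ref{bs!}, so both groups are computed as direct limits over this one common directed system.

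Now for each such $Y$, Theorem \ref{raven chern} with $k=\C$ furnishes a Chern character isomorphism
$$
ch_R\colon KK^G_*(C_0(Y),C_0(X))\otimes\C\xrightarrow{\cong}\widehat{HH}^{**}_{G,\C}(Y,X),
$$
natural for equivariant proper maps in either variable; composing with the canonical identification of bifunctors $\widehat{HH}^{**}_{G,\C}(Y,X)\cong H^{**}_{G,\C,c}(Y,X)$ from Lemma \ref{bs=r} gives isomorphisms $KK^G_*(C_0(Y),C_0(X))\otimes\C\cong H^{**}_{G,\C,c}(Y,X)$ that are natural in $Y$. Passing to the direct limit over the $G$-finite $G$-CW subcomplexes, and using that tensoring with $\C$ commutes with direct limits, the left side becomes $RKK^G_*(C_0(\underline{E}G),C_0(X))\otimes\C$ and the right side becomes $H^{**}_{G,\C,!}(\underline{E}G,X)$. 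A direct limit of isomorphisms is an isomorphism, so the induced map $ch_R$ is the asserted isomorphism.

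Thus the corollary is essentially formal given the results quoted. The only points that need care are bookkeeping ones: checking the cofinality claim, so that the two direct limits really are taken over the same directed system; and checking that the transition maps of the two systems are intertwined by $ch_R$, which is precisely the naturality assertion in Theorem \ref{raven chern} combined with the fact that the identification of Lemma \ref{bs=r} is one of bifunctors. I do not expect any analytic or homological obstacle beyond what those statements already contain.
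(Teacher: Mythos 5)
Your proposal is correct and follows essentially the same route as the paper's proof: write $\underline{E}G$ as the increasing union of its $G$-finite $G$-CW subcomplexes (which are cofinal among the $G$-compact invariant subspaces), apply Theorem \ref{raven chern} together with Lemma \ref{bs=r} on each subcomplex, and pass to the direct limit using naturality. The extra bookkeeping you flag (cofinality, second countability, compatibility of transition maps) is exactly what the paper's argument relies on as well.
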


\begin{proof}
Note that $\underline{E}G$ is the increasing union of its $G$-finite, $G$-invariant subcomplexes; moreover, each such subcomplex is $G$-compact and second countable, and any $G$-compact $G$-invariant subspace of $\underline{E}G$ is eventually contained in such a subcomplex by definition of the CW topology.  Hence by definition (see Definition \ref{bs!} above) for the first equality and Lemma \ref{bs=r} for the second
$$
H_{G,\C,!}^*(\underline{E}G,X)=\lim_{Y\subseteq Z}H^*_{G,\C,c}(Y,X)\cong \lim_{Y\subseteq Z}\widehat{HH}^*_{G,k}(Y,X)
$$
where the limits are both taken over all $G$-finite $G$-invariant subcomplexes $Y$ of $EG$.  Moreover,  
$$
RKK^G_*(C_0(\underline{E}G),C_0(X))=\lim_{Y\subseteq Z}KK^G_*(C_0(Y),C_0(X)),
$$
The result now follows from the existence and naturality of the isomorphisms in Theorem \ref{raven chern}.  
\end{proof}

\section{Main results}

The next theorem and corollary are the main results of the paper. 
\begin{theorem}\label{main}
Let $G$ be a countable discrete group, let $\underline{E}G$ be its classifying space for proper actions realized as a $G$-CW complex (see for example \cite[page 6]{Mislin:2003lr}).  Let $X$ be a locally compact, Hausdorff $G$-space with finite $c$-$k$-cohomological dimension, and let $\widehat{X}$ be as in Definition \ref{z hat}.  Let 
$$
0\to \C_{\widehat{X}}\to I^0\to I^1\to \cdots \to I^m\to 0
$$
be a finite-length resolution of $\C_{\widehat{X}}$ by $c$-soft $G$-sheaves of $\C$-vector spaces, and let $\Gamma_c(I^{-\bullet})$ denote the induced chain complex 
$$
\Gamma_c(\widehat{X},I^m)\leftarrow \cdots \leftarrow \Gamma_c(\widehat{X},I^0)
$$
of $G$-$\C$ modules, where $\Gamma_c(\widehat{X},I^m)$ appears in degree $-m$ and $\Gamma_c(\widehat{X},I^0)$ in degree zero.   Then there is a canonical isomorphism
$$
RKK^G_*(C_0(\underline{E}G),C_0(X)) \otimes \C \cong H_{**}(G,\Gamma_c(I^{-\bullet})).
$$
\end{theorem}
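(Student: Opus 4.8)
The plan is to stack the two isomorphisms that have already been set up. Corollary \ref{lim cor} provides, via Raven's theorem, a Chern character isomorphism from $RKK^G_*(C_0(\underline{E}G),C_0(X))\otimes\C$ onto the Baum--Schneider group $H^{**}_{G,\C,!}(\underline{E}G,X)$, and Proposition \ref{bcr and gh} (applied with $k=\C$) identifies the latter, degree by degree, with the group hyperhomology $H_{-n}(G,\Gamma_c(I^{-\bullet}))$. So the theorem is really a matter of composing these two and reconciling the indexing conventions; no genuinely new argument is needed.

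Concretely, I would first apply Corollary \ref{lim cor} to obtain the Chern--Raven isomorphism
$$ch_R:RKK^G_*(C_0(\underline{E}G),C_0(X))\otimes\C\ \xrightarrow{\ \cong\ }\ H^{**}_{G,\C,!}(\underline{E}G,X).$$
Then I would check that the hypotheses of Proposition \ref{bcr and gh} are met for $k=\C$: the field $\C$ is a Pr\"{u}fer domain; since $\C$ has characteristic zero, $n^{-1}\in\C$ for every $n$, in particular whenever some point of $X$ has an order-$n$ stabilizer; and $X$ is assumed to have finite $c$-$\C$-cohomological dimension, so a finite-length $c$-soft $G$-resolution of $\C_{\widehat{X}}$ exists --- indeed the resolution $I^\bullet$ fixed in the statement is one, and I use that same resolution so that the chain complex $\Gamma_c(I^{-\bullet})$ appearing here is literally the one appearing in the proposition. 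Proposition \ref{bcr and gh} then yields canonical isomorphisms $H^n_{G,\C,!}(\underline{E}G,X)\cong H_{-n}(G,\Gamma_c(I^{-\bullet}))$ for every $n\in\Z$.

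It remains to pass to the $\Z/2$-grading. By the convention fixed in the introduction, the even part of $H^{**}_{G,\C,!}(\underline{E}G,X)$ is $\bigoplus_{n\ \text{even}}H^n_{G,\C,!}(\underline{E}G,X)$; the degree-reversal $n\mapsto -n$ preserves parity, so the isomorphisms just obtained send this onto $\bigoplus_{n\ \text{even}}H_n(G,\Gamma_c(I^{-\bullet}))$, which is by definition the even part of $H_{**}(G,\Gamma_c(I^{-\bullet}))$, and similarly for the odd parts. Composing with $ch_R$ gives the desired canonical isomorphism $RKK^G_*(C_0(\underline{E}G),C_0(X))\otimes\C\cong H_{**}(G,\Gamma_c(I^{-\bullet}))$. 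The main obstacle is not in this assembly step --- all the weight is carried by Raven's theorem (Theorem \ref{raven chern}, hence Corollary \ref{lim cor}) and by Proposition \ref{bcr and gh}, whose own proof is postponed to Appendix \ref{hh app}; here one only has to keep track of the cohomological-versus-homological indexing shift and the $\Z$-graded to $\Z/2$-graded passage, both of which are harmless since reversing the sign of the degree preserves parity.
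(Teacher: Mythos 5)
Your proposal is correct and matches the paper's own proof: both simply compose the Chern--Raven isomorphism of Corollary \ref{lim cor} with the identification of Proposition \ref{bcr and gh} (for $k=\C$) and note that the degree sign is irrelevant after passing to the $\Z/2$-grading. Your extra verification of the hypotheses of Proposition \ref{bcr and gh} is harmless added detail, not a different route.
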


\begin{proof}
We have isomorphisms
$$
RKK^G_*(C_0(\underline{E}G),C_0(X)) \otimes \C\cong H_{G,\C,!}^{**}(\underline{E}G,X) \cong H_{-**}(G,\Gamma_c(I^{-\bullet}))
$$
by Corollary \ref{lim cor} and Proposition \ref{bcr and gh} respectively.  As we are dealing with $\Z/2$-graded homology theories, the minus sign on degrees is irrelevant.
\end{proof}

\begin{corollary}\label{main cor}
Assume the set up of the previous theorem and furthermore suppose that $G$ satisfies the rational Baum--Connes conjecture with coefficients in $C_0(X)$. Then with notation as in Theorem \ref{main}, 
$$
K_*(C_0(X)\rtimes_rG) \otimes \C \cong H_{**}(G,\Gamma_c(I^{-\bullet})).
$$
In particular, if $X$ is totally disconnected and the $G$-action on $X$ has torsion-free stabilizers, then 
$$
K_*(C(X)\rtimes_rG) \otimes \C \cong H_{**}(G,\C[X])\cong H_{**}(G\ltimes X,\C)
$$
and so the rational HK-conjecture holds for $G\ltimes X$.
\end{corollary}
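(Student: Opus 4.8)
The plan is to assemble Corollary \ref{main cor} directly from the machinery already in place, treating it as a formal consequence of Theorem \ref{main}, Raven's Chern character (via Corollary \ref{lim cor}), and the Baum--Connes assembly map. The first step is to identify the domain of Theorem \ref{main}, namely $RKK^G_*(C_0(\underline{E}G),C_0(X))\otimes\C$, with $K_*(C_0(X)\rtimes_r G)\otimes\C$. Recall that the Baum--Connes assembly map is a natural homomorphism
$$
\mu: RKK^G_*(C_0(\underline{E}G),C_0(X)) \to K_*(C_0(X)\rtimes_r G),
$$
and the \emph{rational} Baum--Connes conjecture with coefficients in $C_0(X)$ is precisely the statement that $\mu\otimes\mathrm{id}_\Q$ — equivalently $\mu\otimes\mathrm{id}_\C$ — is an isomorphism. (Here one should be a little careful: the conjecture is usually phrased for $\Q$, but tensoring further with $\C$ over $\Q$ preserves isomorphisms, so $\mu\otimes\mathrm{id}_\C$ is an isomorphism as well.) Combining this with the isomorphism of Theorem \ref{main} gives the first displayed isomorphism
$$
K_*(C_0(X)\rtimes_r G)\otimes\C \;\cong\; RKK^G_*(C_0(\underline{E}G),C_0(X))\otimes\C \;\cong\; H_{**}(G,\Gamma_c(I^{-\bullet})).
$$

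The second step is to specialize to the totally disconnected case and simplify the right-hand side. If $X$ is totally disconnected then, as noted in Remark \ref{dim rem}, $c$-$\mathrm{dim}(X)=0$, so $X$ has finite $c$-$\C$-cohomological dimension and the hypotheses of Theorem \ref{main} are met; moreover $\widehat{X}$ is itself totally disconnected, so $\C_{\widehat X}$ is already $c$-soft and we may take the trivial one-term resolution $I^\bullet = \C_{\widehat X}$ concentrated in degree zero. Then $\Gamma_c(I^{-\bullet})$ is the chain complex concentrated in degree zero with the single $G$-$\C$ module $\Gamma_c(\widehat X,\C_{\widehat X}) = \C[\widehat X]$, and the hyperhomology $H_n(G,\Gamma_c(I^{-\bullet}))$ collapses to the ordinary group homology $H_n(G,\C[\widehat X])$. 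If in addition every stabilizer of the $G$-action on $X$ is torsion-free, then for $x\in X$ the only torsion element $g\in G_{tor}$ with $gx=x$ is the identity; hence $\widehat X = \{(x,g)\in X\times G_{tor}\mid gx=x\} = X\times\{e\}\cong X$ as a $G$-space (the $G$-action on the $G_{tor}$-coordinate is by conjugation, which fixes $e$). Therefore $\C[\widehat X]\cong\C[X]$ as $G$-$\C$ modules and $H_{**}(G,\Gamma_c(I^{-\bullet}))\cong H_{**}(G,\C[X])$.

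The third step is to match $H_{**}(G,\C[X])$ with the groupoid homology $H_{**}(G\ltimes X;\C)$ appearing in the (rational) HK conjecture. For an ample transformation groupoid $G\ltimes X$ one has, essentially by definition of the Crainic--Moerdijk homology together with the standard projective resolution of $\Z$ (or $\C$) over $\Z G$ (or $\C G$), a natural isomorphism $H_n(G\ltimes X;\C)\cong H_n(G,\C[X])$, where $\C[X] = C_c(X,\C)$ is the $G$-module of compactly supported locally constant $\C$-valued functions on $X$ with the induced action; this is the identification already flagged in Remark \ref{dim rem} (``up to multiplying degrees by $-1$'', which is irrelevant for the $\Z/2$-graded theory). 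Passing to $\Z/2$-gradings and chaining the isomorphisms from the three steps yields
$$
K_*(C(X)\rtimes_r G)\otimes\C \;\cong\; H_{**}(G,\C[X]) \;\cong\; H_{**}(G\ltimes X;\C),
$$
which is exactly the assertion that the rational HK conjecture holds for $G\ltimes X$ (note $C_0(X)=C(X)$ here since the relevant $X$, being a Cantor set or similar, is compact).

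I do not anticipate a genuine obstacle, since all the heavy lifting is done by Theorem \ref{main} and Raven's theorem; the corollary is a packaging statement. The one point demanding minor care is the bookkeeping around rational versus complex coefficients in the Baum--Connes hypothesis — one must observe that ``rational Baum--Connes with coefficients in $C_0(X)$'' implies the complexified assembly map is an isomorphism, which is immediate because $-\otimes_\Q\C$ is exact. A secondary point worth stating explicitly is the identification of $\widehat X$ with $X$ under the torsion-free stabilizer hypothesis and the collapse of the $c$-soft resolution to a single term in the zero-dimensional case; both are routine given Remark \ref{dim rem}, but they are the precise inputs that turn the general Theorem \ref{main} into the familiar HK statement, so they should be spelled out rather than left implicit.
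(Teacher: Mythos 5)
Your proposal is correct and follows essentially the same route as the paper: the rational Baum--Connes assembly map identifies $K_*(C_0(X)\rtimes_r G)\otimes\C$ with $RKK^G_*(C_0(\underline{E}G),C_0(X))\otimes\C$, Theorem \ref{main} handles the rest, and the second part uses exactly the paper's observations that $\widehat{X}=X$ for torsion-free stabilizers, that the resolution collapses in the zero-dimensional case (Remark \ref{dim rem}), and the standard identification of $H_{**}(G,\C[X])$ with $H_{**}(G\ltimes X;\C)$. The extra details you spell out (the $\Q$ versus $\C$ bookkeeping and the one-term $c$-soft resolution) are correct and consistent with what the paper leaves implicit.
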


\begin{proof}
By the rational Baum--Connes conjecture, 
\[
K_*(C(X)\rtimes_rG) \otimes \C \cong RKK^G_*(\underline{E}G,C_0(X)) \otimes \C
\]
and by Theorem \ref{main}
$$
RKK^G_*(\underline{E}G,C_0(X)) \otimes \C \cong H_{**}(G,\Gamma_c(I^{-\bullet})),
$$
which completes the proof of the first part. 

The second part follows from the first part, the fact that $\hat{X}=X$ when the $G$ action has torsion-free stabilizers, and the isomorphisms
$$
H_{**}(G,\Gamma_c(I^{-\bullet}))\cong H_{**}(G,\C[X])\cong H_{**}(G\ltimes X,\C)
$$
(compare the comments in Remark \ref{dim rem} on zero-dimensional spaces for the first of these, and the discussion just before Proposition 2.4 in \cite{Matui2016aa} for the second).
\end{proof}
\begin{remark}
It is worth comparing the hypotheses of the rational HK-conjecture with those in Corollary \ref{main cor}. Firstly, unlike in the HK-conjecture, there is no minimality assumption in the corollary. Secondly, in the original version of the rational HK-conjecture the groupoid is assumed to be essentially principal. However, as mentioned in the introduction, Scarparo's counterexample \cite{Scarparo:2020aa} implies that one needs a different hypothesis on the isotropy. A natural one is to assume that the groupoid is principal; in fact, the assumption in Corollary \ref{main cor} is weaker than principal.  Thirdly, the original version of the HK conjecture requires the groupoid to be ample, but here ampleness (i.e.\ zero-dimensionality of the base space) is replaced by a more general finite-dimensionality assumption, at the price of replacing group(oid) homology with hyperhomology.  Due to the failure of the classical Chern character to be an integral isomorphism for higher-dimensional spaces, zero-dimensionality is a natural assumption for the original (integral) HK conjecture.  Finally, Corollary \ref{main cor} only applies to (certain) transformation groupoids while the rational HK-conjecture is stated for (certain) general groupoids. 
\end{remark}

\begin{remark}
For readers familiar with the Baum--Connes conjecture it is worth noting that we do not need to assume the strong Baum--Connes conjecture (by which we mean that the statement that the $\gamma$ element exists and equals one) for the previous result, `just' the statement that the assembly map for $G$ with coefficients in $C_0(X)$ is an isomorphism.  This is a genuinely weaker statement: for example, for a property (T) hyperbolic group, the $\gamma$ element is not one, but Lafforgue \cite{Lafforgue:2009ss} has established the Baum--Connes conjecture with arbitrary coefficients for these groups.

Let us also comment on how to remove the strong Baum-Connes assumption from another approach to the HK conjecture.  Indeed, the Proietti-Yamashita spectral sequence of \cite[Corollary 3.6]{ProiettiYasashita2023aa} gives a different and very interesting `Baum--Connes' based approach to the HK conjecture (compare for example the discussion in \cite[Section 4.4]{BonickeDellAieraGabeWillett2023aa}).  In the statement of \cite[Corollary 3.6]{ProiettiYasashita2023aa}, the authors assume the strong Baum--Connes conjecture.   However, strong Baum--Connes is only necessary as at the time \cite{ProiettiYasashita2023aa} was written, there was not a model for the groupoid Baum--Connes conjecture in terms of localizations of categories as in the work of Meyer-Nest \cite{Meyer:2006fr} for groups.  B\"{o}nicke and Proietti have since provided such a model: one can use \cite[Theorem 3.14]{Bonicke:2022vk} to show that the spectral sequence appearing in \cite[Corollary 3.6]{ProiettiYasashita2023aa} always converges to the left hand side of the groupoid Baum--Connes conjecture in the case of torsion-free isotropy, and thus that strong Baum--Connes is not needed for that result, `just' the statement that the usual Baum--Connes assembly map is an isomorphism.
\end{remark}

\subsection{An HK conjecture that allows torsion stabilizers}

Based on the Theorem \ref{main}, it is natural to introduce a revised version of the rational HK-conjecture. Some notation is required to do so. Let $\mathcal{G}$ be an ample groupoid with base space $\mathcal{G}^{(0)}=X$.  Let $Iso(\mathcal{G})$ be the isotropy subgroupoid of $\mathcal{G}$. $Iso(\mathcal{G})$ is a closed subgroupoid of $\mathcal{G}$, but it need not be \'{e}tale.  An element $g\in Iso(\mathcal{G})$ is \emph{torsion} if $g^n$ belongs to $X$ for some $n\in \N$ with $n>0$.  We define $\widehat{X}$ to consist of the torsion elements of $Iso(\mathcal{G})$ of $\mathcal{G}$ equipped with the subspace topology; note that if $\mathcal{G}$ were a transformation groupoid, this is the same space as was defined in Definition \ref{z hat}.  We define an action of $\mathcal{G}$ on $\widehat{X}$  (recall that groupoid actions were discussed in Section \ref{secPrelim}, see Definition \ref{def:GroupoidAction}) with anchor map 
$$
p:\widehat{X}\to X,\quad x\mapsto r(x)
$$
and action
$$
g\cdot x:=gxg^{-1}.
$$
We write $\widehat{\mathcal{G}}:=\mathcal{G}\ltimes \widehat{X}$ for the associated crossed product, i.e.\
$$
\widehat{\mathcal{G}}:=\{(x,g,y)\in \widehat{X}\times \mathcal{G}\times \widehat{X}\mid s(g)=p(y),~g\cdot y=x\}
$$
equipped with the subspace topology it inherits from $\widehat{X}\times \mathcal{G}\times \widehat{X}$, source and range map given by projection on the third and first factors respectively, multiplication given by $(x,g,y)(y,h,z)=(x,gh,z)$, and inverse given by $(x,g,y)^{-1}=(y,g^{-1},x)$.

\begin{lemma}
Using the notation of the previous paragraph, $\widehat{X}$ is locally compact for the subspace topology it inherits from $\mathcal{G}$ and $\widehat{\mathcal{G}}$ is a locally compact, Hausdorff, \'{e}tale groupoid.
\end{lemma}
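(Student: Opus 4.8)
The plan is to exhibit $\widehat{X}$ as a locally closed subspace of $\mathcal{G}$, to exhibit $\widehat{\mathcal{G}}$ as homeomorphic to a \emph{closed} subspace of a product of locally compact Hausdorff spaces, and then to read off the \'etale property by identifying the source map of $\widehat{\mathcal{G}}$ with a pullback of the source map of $\mathcal{G}$.

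For $\widehat{X}$ I would first note that $Iso(\mathcal{G})=(r,s)^{-1}(\Delta_X)$ is closed in $\mathcal{G}$ because $X$ is Hausdorff. Next, for each integer $n\geq 1$ the power map $\pi_n\colon Iso(\mathcal{G})\to\mathcal{G}$, $g\mapsto g^n$, is well-defined (every $g\in Iso(\mathcal{G})$ is composable with itself, and each power $g^k$ again lies in $Iso(\mathcal{G})$ since $r(g^k)=r(g)=s(g)=s(g^k)$) and continuous (it is built from the continuous multiplication map by induction). Since $\mathcal{G}$ is \'etale, $X=\mathcal{G}^{(0)}$ is open in $\mathcal{G}$, so $P_n:=\pi_n^{-1}(X)$ is open in $Iso(\mathcal{G})$; hence $\widehat{X}=\bigcup_{n\geq 1}P_n$ is open in $Iso(\mathcal{G})$, i.e.\ locally closed in $\mathcal{G}$. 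As a locally closed subspace of a locally compact Hausdorff space, $\widehat{X}$ is locally compact Hausdorff.

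For $\widehat{\mathcal{G}}$ I would first check that the action of $\mathcal{G}$ on $\widehat{X}$ is well-defined and continuous: if $s(g)=p(y)$ then $s(g)=r(y)=s(y)$, so $g y g^{-1}$ is defined, lies in $Iso(\mathcal{G})$, and is torsion because $(gyg^{-1})^n=g y^n g^{-1}=g\,p(y)\,g^{-1}=r(g)\in X$ once $y^n=p(y)$; continuity is inherited from that of multiplication and inversion. Writing $D:=\mathcal{G}_s\!\times_p\widehat{X}$, the set $D$ is the preimage of $\Delta_X$ under a continuous map $\mathcal{G}\times\widehat{X}\to X\times X$, hence closed in the locally compact Hausdorff space $\mathcal{G}\times\widehat{X}$, hence itself locally compact Hausdorff. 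Then $(g,y)\mapsto(g\cdot y,g,y)$ is a homeomorphism $D\to\widehat{\mathcal{G}}$ whose inverse is the restriction to $\widehat{\mathcal{G}}$ of a coordinate projection, so $\widehat{\mathcal{G}}$ is locally compact and Hausdorff.

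For the \'etale property I would transport the groupoid structure maps to $D$: under the homeomorphism above, $\widehat{\mathcal{G}}^{(0)}$ corresponds to $\{(p(y),y):y\in\widehat{X}\}$, which is homeomorphic to $\widehat{X}$, and the source map of $\widehat{\mathcal{G}}$ corresponds to the projection $D=\mathcal{G}_s\!\times_p\widehat{X}\to\widehat{X}$. But this projection is precisely the pullback of $s\colon\mathcal{G}\to X$ along $p\colon\widehat{X}\to X$, and pullbacks of local homeomorphisms are local homeomorphisms; since $\mathcal{G}$ is \'etale the source map of $\widehat{\mathcal{G}}$ is therefore a local homeomorphism, and the range map is its composite with the self-homeomorphism of $\widehat{\mathcal{G}}$ given by inversion, hence also a local homeomorphism. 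I expect the only steps needing genuine care to be the well-definedness and continuity of the power maps $\pi_n$ (because composition in $\mathcal{G}$ is only partially defined, one must stay inside $Iso(\mathcal{G})$ throughout) and the small amount of bookkeeping identifying the source map of $\widehat{\mathcal{G}}$ with the fibre-product projection; neither poses a real obstacle.
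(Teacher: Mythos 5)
Your proof is correct, and for the first half (openness of $\widehat{X}$ in the closed set $Iso(\mathcal{G})$ via the power maps, hence local compactness) it coincides with the paper's argument. Where you genuinely diverge is in the treatment of $\widehat{\mathcal{G}}$ and the \'etale property. The paper works directly inside $\widehat{X}\times\mathcal{G}\times\widehat{X}$: it observes $\widehat{\mathcal{G}}$ is closed there, asserts that the range and source maps are open as restrictions of coordinate projections, and then proves local injectivity of the range map by hand, choosing open sets $U\ni g$ and $V\ni y$ on which the range map of $\mathcal{G}$ is injective and chasing $x_1=x_2\Rightarrow g_1=g_2\Rightarrow y_1=y_2$. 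You instead identify $\widehat{\mathcal{G}}$ with the fibre product $D=\mathcal{G}_s\!\times_p\widehat{X}$ (closed in $\mathcal{G}\times\widehat{X}$, giving local compactness and Hausdorffness just as in the paper), recognize the source map of $\widehat{\mathcal{G}}$ as the pullback of $s\colon\mathcal{G}\to X$ along $p$, and invoke stability of local homeomorphisms under pullback, obtaining the range map by composing with inversion. This buys you the openness and local injectivity in one stroke and sidesteps the paper's slightly terse claim that restrictions of coordinate projections are open (restricting an open map to subspaces of domain and codomain is not open in general, though it works out here); the paper's route, in exchange, is more hands-on and stays entirely in the explicit coordinates of $\widehat{\mathcal{G}}$. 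Your verifications that the action is well defined, that $(g,y)\mapsto(g\cdot y,g,y)$ is a homeomorphism onto $\widehat{\mathcal{G}}$, and that units correspond to $\widehat{X}$ are all accurate; the only thing stated more explicitly in the paper than in your write-up is the (routine) continuity of multiplication and inversion on $\widehat{\mathcal{G}}$, which follows exactly as you indicate for the action.
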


\begin{proof}
We first claim that $\widehat{X}$ is open in $Iso(\mathcal{G})$.  Indeed, consider the map 
$$
p_n:Iso(\mathcal{G})\to \mathcal{G},\quad g\mapsto g^n.
$$
Then $p_n$ is continuous, and $\widehat{X}=\bigcup_{n=1}^\infty p_n^{-1}(X)$.  As $\mathcal{G}$ is \'{e}tale, $X$ is open in $\mathcal{G}$, whence $\widehat{X}$ is open in $Iso(\mathcal{G})$, giving the claim.  In particular, the claim implies that $\widehat{X}$ is an open subset of a closed subset of a locally compact space, so locally compact in its own right.

We now look at $\widehat{\mathcal{G}}$.  This is a closed subspace of $\widehat{X}\times \mathcal{G}\times \widehat{X}$, whence locally compact and Hausdorff, and the operations are all continuous by properties of product topologies, and continuity of the operations on $\mathcal{G}$.  The range and source maps are open as they are restrictions of coordinate projections.  It remains to show that the range map is a local homeomorphism; given we already know that it is continuous and open, it suffices to show that it is locally injective.  Indeed, let $(x,g,y)\in \widehat{\mathcal{G}}$ be given, and let $U\owns g$ and $V\owns y$ be open sets on which the range map for $\mathcal{G}$ is injective.  We claim that the range map for $\widehat{\mathcal{G}}$ is injective on $\widehat{X}\times U\times V \cap \widehat{\mathcal{G}}$.  Indeed, if $(x_1,g_1,y_1)$ and $(x_2,g_2,y_2)$ are points in this set such that $x_1=x_2$, then $s(x_1)=s(x_2)$, so $r(g_1)=r(g_2)$, so $g_1=g_2$ by choice of $U$; hence also $s(g_1)=s(g_2)$, so $r(y_1)=r(y_2)$, so $y_1=y_2$ by choice of $V$ and we are done.
\end{proof}

Now, recall that Crainic and Moerdijk \cite[3.3]{CrainicMoerdijk2000aa} show that if $\mathcal{G}$ is a groupoid with base space $\mathcal{G}^{(0)}$ and $c$-$\C$-$\text{dim}(\mathcal{G}^{(0)})<\infty$, then there is a finite length resolution 
$$
0\to \C_{\mathcal{G}^{(0)}}\to I^0\to \cdots \to I^m\to 0
$$
of the sheaf $\C_{\mathcal{G}^{(0)}}$ of locally constant $\C$-valued functions on $\C_{\mathcal{G}^{(0)}}$ by $c$-soft $\mathcal{G}$-sheaves of $\C$-vector spaces (the proof is essentially the same as that of Lemma \ref{rgcx} below).  Moreover, in \cite[3.1]{CrainicMoerdijk2000aa}, Crainic and Moerdijk define the groupoid hyperhomology groups $\mathbb{H}_*(\mathcal{G};I^{-\bullet})$ associated to such a resolution, and in \cite[3.4]{CrainicMoerdijk2000aa}, they define the groupoid homology to be $H_*(\mathcal{G};\C_{\widehat{X}}):=\mathbb{H}_*(\mathcal{G};I^{-\bullet})$.

By analogy with Theorem \ref{main} above, one is led to the following general version of the HK conjecture.

\begin{conjecture}\label{hk revised}
Suppose that $\mathcal{G}$ is a second countable, \'etale groupoid such that $c$-$\C$-$\text{dim}(\widehat{X})<\infty$ and such that the Baum--Connes conjecture holds for $\mathcal{G}$. Then 
\begin{equation}\label{new hk}
K_*(C^*_r(\mathcal{G}))\otimes \C\cong H_{**}(\widehat{\mathcal{G}};\C_{\widehat{X}}).
\end{equation}
\end{conjecture}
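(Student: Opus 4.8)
The plan is to imitate, link for link, the chain of isomorphisms that proves Theorem \ref{main intro} (that is, Theorem \ref{main} together with Corollary \ref{main cor}) in the transformation-groupoid case, replacing each group-equivariant ingredient by its groupoid analogue. There are three links: (i) the Baum--Connes isomorphism, which after tensoring with $\C$ reduces $K_*(C^*_r(\mathcal{G}))\otimes\C$ to a topological $K$-homology group; (ii) a purely sheaf-theoretic identification of an equivariant bivariant homology theory with the Crainic--Moerdijk hyperhomology $H_{**}(\widehat{\mathcal{G}};\C_{\widehat{X}})$; and (iii) a Chern character connecting the two. For $\mathcal{G}=G\ltimes X$ these are respectively Corollary \ref{lim cor}, Proposition \ref{bcr and gh}, and Theorem \ref{raven chern} of Raven \cite{Raven:2004aa}; the task is to supply genuinely groupoid-level versions. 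Link (i) is available off the shelf: by Tu's formulation of groupoid Baum--Connes \cite{Tu:1999bq}, the assumed assembly isomorphism for $\mathcal{G}$ identifies $K_*(C^*_r(\mathcal{G}))$ with $\lim_{Z}KK^{\mathcal{G}}_*(C_0(Z),C_0(\mathcal{G}^{(0)}))$, the inductive limit over $\mathcal{G}$-compact $\mathcal{G}$-invariant subspaces $Z$ of the classifying $\mathcal{G}$-space $\underline{E}\mathcal{G}$ for proper actions, realized as a $\mathcal{G}$-CW complex. So after $\otimes\,\C$ it suffices to construct a natural isomorphism, compatible with the structure maps of this inductive system, from $\lim_{Z}KK^{\mathcal{G}}_*(C_0(Z),C_0(\mathcal{G}^{(0)}))\otimes\C$ onto $H_{**}(\widehat{\mathcal{G}};\C_{\widehat{X}})$.

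For link (ii) I would, for each proper $\mathcal{G}$-compact $Z$, form the blown-up space $\widehat{Z}$ of torsion elements of $Iso(\mathcal{G}\ltimes Z)$ --- equivalently the fibred product of $\widehat{X}$ with $Z$ over $\mathcal{G}^{(0)}$ --- and define a $\mathcal{G}$-equivariant Baum--Schneider homology $H^*_{\mathcal{G},\C,c}(Z,\mathcal{G}^{(0)})$ exactly as in Definition \ref{bs hom}, but with the group $G$ replaced throughout by the groupoid $\mathcal{G}$: $\mathcal{G}$-equivariant sheaves of $\C$-vector spaces, $\mathcal{G}$-equivariant compactly supported sections, and the associated bounded-below derived category. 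A step-by-step generalization of the proof of Proposition \ref{bcr and gh} given in Appendix \ref{hh app} should then yield, for each $Z$, a canonical isomorphism of $H^*_{\mathcal{G},\C,c}(Z,\mathcal{G}^{(0)})$ with the groupoid hyperhomology of $\widehat{\mathcal{G}}|_{Z}$ with coefficients in a finite-length $c$-soft resolution of $\C_{\widehat{Z}}$; passing to the limit over $Z$ (and using that $\underline{E}\mathcal{G}$ absorbs every $\mathcal{G}$-compact $\mathcal{G}$-invariant set) one would obtain $\lim_{Z}H^*_{\mathcal{G},\C,c}(Z,\mathcal{G}^{(0)})\cong\mathbb{H}_{-*}(\widehat{\mathcal{G}};\C_{\widehat{X}})=H_{-**}(\widehat{\mathcal{G}};\C_{\widehat{X}})$. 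The hypothesis $c$-$\C$-$\text{dim}(\widehat{X})<\infty$ enters precisely here: by Crainic--Moerdijk \cite[3.3]{CrainicMoerdijk2000aa} (compare Lemma \ref{rgcx}) it supplies the finite-length $c$-soft $\widehat{\mathcal{G}}$-sheaf resolution of $\C_{\widehat{X}}$ that makes $H_{**}(\widehat{\mathcal{G}};\C_{\widehat{X}})$ well-defined and keeps everything inside bounded-below derived categories; and, since $\C$ is a field, none of the torsion obstructions discussed in Remark \ref{k rem} can occur. I expect this link to be long but essentially mechanical --- the same equivariant-sheaf bookkeeping as in the appendix, with ``$G$-action'' systematically upgraded to ``$\mathcal{G}$-action'' and conjugacy classes of torsion group elements replaced by the inertia groupoid $\widehat{\mathcal{G}}$.

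Link (iii) --- a \emph{groupoid Chern--Raven character}
$$
ch_R:KK^{\mathcal{G}}_*(C_0(Z),C_0(\mathcal{G}^{(0)}))\otimes\C\longrightarrow H^{**}_{\mathcal{G},\C,c}(Z,\mathcal{G}^{(0)})
$$
that is an isomorphism for every proper $\mathcal{G}$-compact $\mathcal{G}$-CW complex $Z$ and natural enough to descend to the inductive limit --- is the one that is genuinely missing, and is where essentially all of the difficulty lies; the paper itself flags its construction as an open problem. My preferred line of attack is to adapt Raven's method \cite{Raven:2004aa}: develop a $\mathcal{G}$-equivariant Baum--Douglas model $tKK^{\mathcal{G}}$ whose cycles are $\mathcal{G}$-spin$^{\text{c}}$ manifolds over $\underline{E}\mathcal{G}$ carrying bundle data, prove the (analytically hard) comparison $tKK^{\mathcal{G}}\cong KK^{\mathcal{G}}$, and then define $ch_R$ cycle-wise from the classical spin$^{\text{c}}$ Chern character; rational invertibility should follow by exploiting that $\mathcal{G}\ltimes Z$ is proper --- hence locally a bundle of compact groups over its orbit space --- and reducing, fibrewise over the orbit space and with the help of link (ii), to the classical equivariant (compact Lie group) Chern isomorphism. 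Two fallback routes: (a) import Voigt's bivariant local cyclic Chern character \cite{Voigt:2009aa,Voigt:207aa}, which Voigt has indicated should extend beyond $G$-simplicial complexes, together with a groupoid identification of $HL^{\mathcal{G}}$ with $H^{**}_{\mathcal{G},\C,c}$; or (b) develop the bivariant form of L\"{u}ck's Bredon-homological Chern character in the Mitchener--Kranz framework \cite{Luck:2002aa,Mitchener:2004aa,Kranz:2021aa}. Each of these carries a substantial technical debt, and none currently covers the torsion-isotropy \'etale case at hand; note in particular that the categorical Baum--Connes and Proietti--Yamashita approaches \cite{Bonicke:2022vk,ProiettiYasashita2023aa} are, by contrast, tailored to \emph{torsion-free} isotropy and do not apply. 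Producing any one of these constructions with enough naturality to survive the passage to the limit and to be matched, via link (ii), with $H_{**}(\widehat{\mathcal{G}};\C_{\widehat{X}})$ is the crux of Conjecture \ref{hk revised} and the reason it remains open.
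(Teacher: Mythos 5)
The statement you were asked to prove is labelled a \emph{conjecture} in the paper, and the paper does not prove it in general: it only establishes it for transformation groupoids $\mathcal{G}=G\ltimes X$, via Theorem \ref{main} and Corollary \ref{main cor}, and explicitly records the construction of a Chern character for more general groupoids as an open problem. Your proposal correctly diagnoses this. The three-link structure you describe --- (i) the groupoid Baum--Connes assembly isomorphism, (ii) a sheaf-theoretic identification of an equivariant bivariant (Baum--Schneider-type) theory with the Crainic--Moerdijk hyperhomology $H_{**}(\widehat{\mathcal{G}};\C_{\widehat{X}})$ generalizing Proposition \ref{bcr and gh}, and (iii) a groupoid-level Chern--Raven character generalizing Theorem \ref{raven chern} --- is exactly the skeleton of the paper's argument in the transformation-groupoid case, and your identification of link (iii) as the genuinely missing ingredient agrees with the paper's own assessment. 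Your remarks that the hypothesis $c$-$\C$-$\text{dim}(\widehat{X})<\infty$ enters via the finite-length $c$-soft resolution (Lemma \ref{rgcx} and \cite[3.3]{CrainicMoerdijk2000aa}) and that working over $\C$ removes the torsion obstructions of Remark \ref{k rem} are also accurate.

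So there is no error to point out, but be clear about the status of what you have written: it is a research program, not a proof, and link (ii) --- the groupoid generalization of Proposition \ref{bcr and gh} and its appendix --- is also not ``essentially mechanical'' in any currently documented sense; the paper itself only asserts the identification with groupoid homology for transformation groupoids, where the inertia groupoid $\widehat{\mathcal{G}}$ can be unravelled using conjugacy classes of torsion elements and Shapiro's lemma (Proposition \ref{Rem:ComHGamma}). For a general \'etale groupoid one would need an equivariant sheaf theory over $\underline{E}\mathcal{G}$ and a replacement for the simplicial model of Definition \ref{simp eg}, neither of which is supplied here or in the cited sources. If your goal were to ``prove the statement as stated in the paper,'' the honest answer --- which your proposal essentially gives --- is that the paper proves it only for transformation groupoids (Corollary \ref{main cor}) and leaves the general case, in particular the groupoid Chern character, open.
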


Corollary \ref{main cor} establishes this conjecture for transformation groupoids.  The right hand side of line \eqref{new hk} above should really be regarded as a conjectural computation of the left hand side of the Baum--Connes conjecture for $\mathcal{G}$: we also conjecture that the right hand side of line \eqref{new hk} above is always isomorphic to the left hand side of the Baum--Connes conjecture (tensored with $\C$) for the groupoid $\mathcal{G}$; Theorem \ref{main} establishes this conjecture for transformation groupoids.

\subsection{Computational tools and examples}

In this section we discuss some explicit computational tools, use these to discuss how Scarparo's counterexamples to the HK conjecture \cite{Scarparo:2020aa} interact with Theorem \ref{main}, and discuss two additional examples.

We first give a somewhat more explicit (although unnatural in $G$) computation of $H_*(G,k[\widehat{X}])$ when $k$ is a unital commutative ring.  This requires us to recall a standard definition: compare for example \cite[Section III.5]{Brow:1982rt}.

\begin{definition}\label{ind def}
Let $H$ is a subgroup of a group $G$ and $M$ is an $H$-$k$-module, then the \emph{induced} module is $\text{Ind}_H^G(M):=kG\otimes_{kH}\otimes M$.
\end{definition}

\begin{proposition} \label{Rem:ComHGamma}
Let $X$ be a totally disconnected, locally compact, Hausdorff $G$-space.  For each conjugacy class $c\in G_{tor}//G$, fix $g_c\in c$.  Then 
$$
H_*(G,k[\widehat{X}])\cong \bigoplus_{c\in G_{tor}//G} H_*(Z(g_c),k[X^{g_c}]).
$$
\end{proposition}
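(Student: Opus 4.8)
The plan is to decompose the $G$-module $k[\widehat{X}]$ of compactly supported locally constant functions along the conjugacy classes of torsion elements, and then apply Shapiro's lemma. First I would observe that, as a $G$-space, $\widehat{X}$ decomposes as the disjoint union $\widehat{X}=\bigsqcup_{c\in G_{tor}//G}\widehat{X}_c$, where $\widehat{X}_c=\{(x,g)\in\widehat{X}\mid g\in c\}$; each $\widehat{X}_c$ is clopen (since $G_{tor}//G$ is discrete and $\phi_X\colon\widehat X\to G_{tor}$ is continuous, this uses that conjugacy classes of torsion elements are open in $G_{tor}$, or one argues via the partition directly), and $G$ permutes the points within each $\widehat{X}_c$ by the conjugation action. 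Consequently $k[\widehat{X}]\cong\bigoplus_{c}k[\widehat{X}_c]$ as $G$-$k$-modules, and since group homology commutes with direct sums it suffices to identify $H_*(G,k[\widehat{X}_c])$ with $H_*(Z(g_c),k[X^{g_c}])$ for each fixed $c$.

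The key step is the identification $k[\widehat{X}_c]\cong\mathrm{Ind}_{Z(g_c)}^G\big(k[X^{g_c}]\big)$ as $G$-$k$-modules. Here one uses that the map $G\to c$, $\gamma\mapsto\gamma g_c\gamma^{-1}$, induces a $G$-equivariant homeomorphism $G/Z(g_c)\times_{?}\;(\text{fibers})\xrightarrow{\sim}\widehat{X}_c$; more precisely, $\widehat{X}_c\cong G\times_{Z(g_c)}X^{g_c}$ as $G$-spaces, where $Z(g_c)$ acts on $X^{g_c}$ by restriction of the $G$-action (note $Z(g_c)$ does preserve $X^{g_c}$) and on $G$ by right translation. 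This is the standard orbit decomposition: the stabilizer of the "point at infinity" $g_c\in c$ under conjugation is exactly $Z(g_c)$, and the fiber of $\phi_X$ over $g_c$ is $X^{g_c}$. Taking compactly supported locally constant functions turns the balanced product of spaces into the induced module: $k[G\times_{Z(g_c)}X^{g_c}]\cong kG\otimes_{kZ(g_c)}k[X^{g_c}]=\mathrm{Ind}_{Z(g_c)}^G(k[X^{g_c}])$ (one should check that compact support on the left corresponds to the finitely-supported-over-cosets condition implicit in the algebraic induced module, which is where totally disconnectedness and local compactness of $X$ are used).

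Having done this, I would invoke Shapiro's lemma (see \cite[Section III.6]{Brow:1982rt}), which gives $H_*(G,\mathrm{Ind}_{Z(g_c)}^G(M))\cong H_*(Z(g_c),M)$ for any $Z(g_c)$-$k$-module $M$; applied with $M=k[X^{g_c}]$ and summed over $c\in G_{tor}//G$ this yields the claimed isomorphism. The main obstacle, and the only real content beyond bookkeeping, is verifying the topological orbit decomposition $\widehat{X}_c\cong G\times_{Z(g_c)}X^{g_c}$ together with the compatibility of compact supports under the passage to functions — i.e.\ that the purely topological "induction of spaces" matches the algebraic induction of modules at the level of $k[-]$. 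This requires a little care with the discreteness of $G/Z(g_c)$ (automatic since $G$ is discrete) and with checking that a compactly supported locally constant function on $\widehat{X}_c$ is supported on finitely many $G$-translates of a copy of $X^{g_c}$; once this is in hand the rest is formal. I would also remark that the isomorphism is independent of the choice of representatives $g_c$ up to the canonical identifications, though it is not natural in $G$, as the statement already flags.
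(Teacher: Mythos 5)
Your proposal is correct and follows essentially the same route as the paper: decompose $\widehat{X}$ over conjugacy classes in $G_{tor}//G$, identify the functions on each piece $\bigsqcup_{g\in c}X^g\times\{g\}$ with the induced module $\mathrm{Ind}_{Z(g_c)}^G k[X^{g_c}]$, and conclude via Shapiro's lemma together with the fact that group homology commutes with direct sums. The extra care you take with the balanced-product identification and the matching of compact supports is exactly the verification the paper leaves to the reader as a ``natural identification,'' so nothing is missing.
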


\begin{proof}
Note that we may write $\widehat{X}$ as
$$
\widehat{X}=\bigsqcup_{c\in G_{tor}//G}\Bigg(\bigsqcup_{g\in c} X^g\times \{g\}\Bigg),
$$
where for each $c$, the set $\bigsqcup_{g\in c} X^g\times \{g\}$ is $G$-invariant.  Hence
\begin{equation}\label{big hat split}
k[\widehat{X}]=\bigoplus_{c\in G_{tor}//G}k\Bigg[\bigsqcup_{g\in c} X^g\times \{g\}\Bigg].
\end{equation}
Note that the centralizer $Z(g_c)$ acts on $X^{g_c}$, and that there are natural identifications of $G$-modules 
\begin{equation}\label{is ind}
k\Bigg[\bigsqcup_{g\in c} X^g\times \{g\}\Bigg]=\text{Ind}_{Z(g_c)}^G k[X^{g_c}].
\end{equation}
On the other hand, Shapiro's lemma (see for example \cite[Proposition III.6.2]{Brow:1982rt}) implies that 
\begin{equation}\label{shap lem}
H_*(G,\text{Ind}_{Z(g_c)}^G k[X^{g_c}])\cong H_*(Z(g_c),k[X^{g_c}]).
\end{equation}
Lines \eqref{big hat split}, \eqref{is ind} and \eqref{shap lem} plus that homology commutes with direct sums gives the result.
\end{proof}

As mentioned in the introduction, Scarparo \cite{Scarparo:2020aa} constructed the first counterexamples to the HK conjecture.  Here we give a brief discussion explaining how Scarparo's examples are consistent with our revised version of the rational HK conjecture, see Conjecture \ref{hk revised}. It should be noted that Scarparo's counterexamples fit within the framework of Theorem \ref{main}, so the revised version of the rational HK conjecture holds for these examples.  In the example below, we use Proposition \ref{Rem:ComHGamma} to make this explicit.

\begin{example}
Let $G=\Z\rtimes (\Z/2)$ denote the infinite dihedral group; the semi-direct product is defined by having $\Z/2$ act by the (unique non-trivial) automorphism of $\Z$ defined by $n\mapsto -n$.  Scarparo writes elements of this group as pairs $(n,i)\in \Z\times (\Z/2)$ subject to the usual multiplication rules for a semi-direct product; precisely, 
$$
(n,i)(m,j)=(n+(-1)^im,i+j).
$$
One computes that the torsion elements of $\Gamma$ are precisely the identity, and those of the form $(n,1)$ for some $n\in \Z$, and that there are two conjugacy classes of such elements: those where $n$ is even, and where it is odd.  Moreover, in either case the centralizer of a non-trivial torsion element is just the subgroup it generates.

Now, choose a sequence $(n_k)_{k=1}^\infty$ of positive integers such that $n_k$ divides $n_{k+1}$ for all $k$.  Define $G_k$ to be the subgroup $\{(n,i)\in G\mid n_k\text{ divides n}\}$ of $G$ (one might reasonably also write $G_k=n_k\Z\rtimes (\Z/2)$).  As $n_k$ divides $n_{k+1}$ we have natural inclusions whence natural surjections $G/G_k\to G/G_{k+1}$ of (finite!) $G$-spaces (note that $G_k$ is \emph{not} normal in $G$, so $G/G_k$ is not a quotient group of $G$).   Define now 
 $$
 X:=\lim_{\leftarrow} G/G_k
 $$
 where the inverse limit is taken in the category of $G$-spaces; thus $X$ is topologically a Cantor set, equipped with an action of $G$ by homeomorphisms.  Scarparo shows in  \cite[Example 2.2]{Scarparo:2020aa} that $X$ is topologically free.  We let $\mathcal{G}:=G\ltimes X$ be the corresponding transformation groupoid.

Now, fix such a sequence $(n_k)$ as above, and define $R:=\{m/n_k\in \Q\mid m\in \Z,k\geq 1\}$, a subgroup of the additive group $\Q$.  Let $m$ be the number of $G$-orbits or points in $X$ with non-trivial isotropy; in \cite[Lemma 3.2]{Scarparo:2020aa} Scarparo shows\footnote{\label{cc slop} We are compressing the discussion slightly for simplicity: Scarparo also discusses which of the conjugacy classes of finite subgroups discussed above gives the stabilizers in each case, and there are in fact three cases to consider if one makes that finer distinction.} that $m$ is either $1$ or $2$, and moreover that in either case the stabilizer of a point in one of the orbits is a copy of $\Z/2$ 

 In \cite[Section 3]{Scarparo:2020aa} (see in particular Proposition 3.3 and Theorem 3.5), Scarparo makes the following computations
\begin{equation}\label{sc k}
K_i(C^*_r(\mathcal{G}))\cong \left\{\begin{array}{ll} R \oplus \Z^m & i=0 \\ 0 & i=1\end{array}\right.
\end{equation}
and 
\begin{equation}\label{sc hom}
(H_i(\mathcal{G})=)~H_i(\Gamma,\Z[X])\cong \left\{\begin{array}{ll} R & i=0 \\ 0 & i\geq 1 \text{ even} \\ (\Z/2)^m  & i \text{ odd} \end{array}\right.
\end{equation}
This shows that the HK conjecture fails, even rationally.  

However, in our case, we want to consider $H_i(G,\Z[\widehat{X}])$.  We have that as a $G$-space $\widehat{X}=X\sqcup (G / (\Z/2))^m$ for some copies\footnote{As in footnote \ref{cc slop} above, one should really be careful as to which precise copy of $\Z/2$ inside $G$ that is appearing here. However, this turns out to be irrelevant to our computations.} of $\Z/2$ inside $G$.  Using Proposition \ref{Rem:ComHGamma}, we see that 
\begin{equation}\label{hat split}
H_i(G,\Z[\widehat{X}])=H_i(G,\Z[X])\oplus H_i(\Z/2,\Z)^m.
\end{equation}
  On the other hand, we have the well-known computation  
\begin{equation}\label{z2 hom}
H_i(\Z/2)\cong \left\{\begin{array}{ll} \Z & i=0 \\ 0 & i\geq 1 \text{ even} \\ \Z/2  & i \text{ odd} \end{array}\right.
\end{equation}
(see for example \cite[Section II.3]{Brow:1982rt}).  Combining lines \eqref{hat split} and \eqref{z2 hom} with Scarparo's computation from line \eqref{sc hom} gives that 
$$
H_i(G,\Z[\widehat{X}])\cong \left\{\begin{array}{ll} R\oplus \Z^m & i=0 \\ 0 & i\geq 1 \text{ even} \\ (\Z/2)^{3m}  & i \text{ odd} \end{array}\right.
$$
and comparing this with line \eqref{sc k} shows that we do indeed have a rational isomorphism 
$$
K_*(C^*_r(\mathcal{G}))\otimes \Q\cong H_{**}(G,\Z[\widehat{X}])\otimes \Q
$$
consistently with the discussion in Conjecture \ref{hk revised} above.

Note that for Scarparo's examples above, we actually have a stronger result than a rational isomorphism. There is an isomorphism
$$
\frac{K_*(C^*_r(\mathcal{G}))}{K_*(C^*_r(\mathcal{G}))_{tor}}\cong \frac{H_{**}(G,\Z[\widehat{X}])}{H_{**}(G,\Z[\widehat{X}])_{tor}}
$$
of the respective quotients by the torsion subgroups.  It is tempting to believe this (or something similar - for example that one has an isomorphism after tensoring with a ring $k$ satisfying the assumptions of Proposition \ref{bcr and gh}) is true in more generality.  
\end{example}

\begin{proposition}\label{spec seq}
With the assumptions of Corollary \ref{main cor}, there is a convergent spectral sequence with terms on the $E^2$ page given by
$$
E_{pq}^2=H_p(G,H^{-q}(\widehat{X},\C)),
$$
and so that the $\Z/2$-graded homology theory associated to the limit (which is naturally $\Z$-graded) is isomorphic to $K_*(C(X)\rtimes_r G)\otimes \C$.
\end{proposition}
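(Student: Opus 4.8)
The plan is to combine the isomorphism of Theorem \ref{main} with the classical hyperhomology spectral sequence of a double complex, and then pass to $\Z/2$-gradings. By Corollary \ref{main cor}, our hypotheses already supply an isomorphism $K_*(C(X)\rtimes_r G)\otimes\C\cong H_{**}(G,\Gamma_c(I^{-\bullet}))$, where the right-hand side is the $\Z/2$-graded version of the group hyperhomology (Definition \ref{ghh}) of $G$ with coefficients in the \emph{bounded} chain complex $\Gamma_c(I^{-\bullet})$ of $G$-$\C$-modules. So it suffices to produce a convergent spectral sequence with abutment the $\Z$-graded groups $H_*(G,\Gamma_c(I^{-\bullet}))$ and with the stated $E^2$-page.

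To build the spectral sequence I would fix a projective resolution $P_\bullet\to\C$ of the trivial $\C G$-module and form the double complex $C_{p,q}:=P_p\otimes_{\C G}\Gamma_c(\widehat{X},I^{-q})$, with $p\geq 0$ and $-m\leq q\leq 0$; since $P_\bullet$ is a bounded-below complex of flats, its total complex computes $L_G(\Gamma_c(I^{-\bullet}))$ and hence $H_*(G,\Gamma_c(I^{-\bullet}))$ (using Remark \ref{gkmodrem} to run everything over $\C$). This is exactly the hyperhomology set-up of \cite[Section 5.7]{Weibel:1995ty}. Filtering $C_{\bullet\bullet}$ by the $P$-degree $p$ and taking homology first in the $q$-direction gives, by flatness of each $P_p$ over $\C G$, that $E^1_{pq}=P_p\otimes_{\C G}H_q(\Gamma_c(I^{-\bullet}))$; a second application of homology yields $E^2_{pq}=\text{Tor}_p^{\C G}\big(\C,H_q(\Gamma_c(I^{-\bullet}))\big)=H_p\big(G,H_q(\Gamma_c(I^{-\bullet}))\big)$. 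Because $0\to\C_{\widehat{X}}\to I^0\to\cdots\to I^m\to 0$ is a $c$-soft resolution, the homology group $H_q(\Gamma_c(I^{-\bullet}))$ is precisely the compactly supported sheaf cohomology $H^{-q}_c(\widehat{X};\C_{\widehat{X}})$, which we abbreviate $H^{-q}(\widehat{X},\C)$ (and which vanishes unless $-m\leq q\leq 0$); this gives $E^2_{pq}=H_p(G,H^{-q}(\widehat{X},\C))$ as claimed. Convergence is automatic: since $\Gamma_c(I^{-\bullet})$ is bounded and $P_\bullet$ is bounded below, each total degree of $C_{\bullet\bullet}$ is a finite direct sum, so the filtration is finite in each degree.

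Finally, the abutment $H_{p+q}(G,\Gamma_c(I^{-\bullet}))$ is naturally $\Z$-graded, and its associated $\Z/2$-graded theory is $H_{**}(G,\Gamma_c(I^{-\bullet}))$, which by Corollary \ref{main cor} is isomorphic to $K_*(C(X)\rtimes_r G)\otimes\C$; this is the last assertion. I do not expect any substantive obstacle here: the content is entirely in the bookkeeping, namely choosing the one of the two double-complex spectral sequences that places $H_p(G,-)$ on the outside of the $E^2$-page (the other has the sheaf cohomology on the outside), matching the cohomological degree of $I^\bullet$ with the homological degree of $\Gamma_c(I^{-\bullet})$ through the sign reindexing, and invoking Remark \ref{gkmodrem} so that the $\C$-coefficient group hyperhomology appearing here agrees with the groups produced by Corollary \ref{main cor}.
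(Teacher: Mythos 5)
Your argument is correct and follows essentially the same route as the paper: identify the homology of $\Gamma_c(I^{-\bullet})$ with compactly supported sheaf cohomology, run the hyperhomology spectral sequence for the coinvariants functor, and conclude with Corollary \ref{main cor}. The only difference is presentational: the paper simply cites \cite[Proposition 5.7.6]{Weibel:1995ty} for the spectral sequence, whereas you reconstruct it by hand from the double complex $P_\bullet\otimes_{\C G}\Gamma_c(I^{-\bullet})$ (which, by the paper's own appeal to \cite[Example 6.1.15 and Corollary 10.5.7]{Weibel:1995ty}, computes the same hyperhomology), so no substantive change.
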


\begin{proof}
The cohomology of a complex $\Gamma_c(\widehat{X},I^{\bullet})$ as in the statement of Theorem \ref{main} is (by definition, and up to a sign change on degrees) the compactly sheaf cohomology $H_c^{\bullet}(\widehat{X},\C_{\widehat{X}})$ of $X$ with coefficients in the sheaf $\C_{\widehat{X}}$ of locally constant functions; we take this to be the definition of compactly supported cohomology with coefficients in $\C$ (for `reasonable' $X$ it is isomorphic to other standard models of compactly supported cohomology: see for example \cite[Part III]{Bredon:1997aa}).  

There is therefore a convergent hyperhomology spectral sequence
$$
H_p(G,H^{-q}_c(\widehat{X},\C))=H_p(G,H^{-q}(\Gamma_c(I^\bullet)))~\Rightarrow~H_{p+q}(G,\Gamma_c(I^{-\bullet}))
$$
(see for example \cite[Proposition 5.7.6]{Weibel:1995ty}).  Now apply Corollary \ref{main cor}.
\end{proof}

\begin{example}
The spectral sequence from Proposition \ref{spec seq} can have non-trivial differentials.  To see this, let $G$ be the fundamental group of a closed Riemann surface $M$ of genus at least two considered as acting on the hyperbolic plane identified with the universal cover $\widetilde{M}$ of $M$.  Let $X=S^1$ be the boundary at infinity of $\widetilde{M}$ equipped with the induced $G$-action.  Then $G$ is torsion-free so $\widehat{X}=X$ and
$$
H^{-q}_c(X,\C))=\left\{\begin{array}{ll} \C, & q=-1,0 \\ 0,& \text{otherwise}\end{array}\right..
$$
Moreover, $G$ acts on $S^1$ via orientation-preserving homeomorphisms, so trivially on cohomology.  The non-trivial part of the $E^2$ page of the spectral sequence from Proposition \ref{spec seq} therefore looks like
{\tiny $$
\xymatrix{ q=0 & : & H_0(G,\C) &  H_1(G,\C) & H_2(G,\C)  & H_3(G,\C) & \cdots \\
q=-1 & : & H_0(G,\C) &  H_1(G,\C) & H_2(G,\C) \ar[ull] & H_3(G,\C) \ar[ull] &\cdots \ar[ull] }
$$}
Moreover, the differentials on all the higher pages vanish for dimension reasons.  On the other hand, $M$ is a model for the classifying space $BG$ of $G$ in this case, and so if $g$ is the genus of $m$ then we have the well-known computation 
$$
H_p(G,\C)=H_p(M,\C)=\left\{\begin{array}{ll} \C, & p=0,2 \\ \C^{2g}, & p=1 \\ 0, & \text{otherwise}\end{array}\right.
$$
and the non-trivial part of the $E^2$-page spectral sequence becomes
\begin{equation}\label{diff map}
\xymatrix{ q=0 & : & \C &  \C^{2g} & \C   & \\
q=-1 & : & \C &  \C^{2g} & \C \ar[ull] & }.
\end{equation}
Comparing this, the fact that the spectral sequences converges to $K_{p+q}(C(X)\rtimes_r G)\otimes \C$, and the computation 
$$
K_n(C(S^1)\rtimes G)\cong \left\{\begin{array}{ll} \Z/(2g-2) \oplus \Z^{2g+1}, & n \text{ even} \\ \Z^{2g+1}, & n \text{ odd} \end{array}\right.
$$
from \cite[Example 34]{Emerson:2006uq}, we see that the differential appearing in line \eqref{diff map} above is necessarily non-trivial.  We suspect that in this example (and in other related boundary actions as studied in \cite{Emerson:2006uq}) the differentials in the spectral sequence should be related to the classical Gysin map (compare for example \cite[pages 177-179]{Bott:1982aa}), but we did not seriously pursue this.
\end{example}

\begin{example}
In \cite{Deeley2023aa}, the transformation groupoids obtained from generalized odometer actions of the fundamental group of flat manifolds are considered. In particular, Theorem 3.5 of \cite{Deeley2023aa} establishes the rational HK-conjecture for these groupoids. We will not go into a detailed discussion of this class of groupoids, but only mention that the fact that the rational HK-conjecture holds for them now follows from Corollary \ref{main cor}. The reason Corollary \ref{main cor} can be applied is because the fundamental group of a flat manifold is torsion-free and amenable.  
\end{example}

\appendix

\section{Baum-Schneider homology and group hyperhomology}\label{hh app}

In this appendix, we give a proof of of Proposition \ref{bcr and gh}, which we restate below for the reader's convenience.

\begin{proposition*}
Let $X$ be a locally compact, Hausdorff $G$-space with finite $c$-$k$-cohomological dimension, and let $\widehat{X}$ be as in Definition \ref{z hat}.  Let 
$$
0\to k_{\widehat{X}}\to I^0\to I^1\to \cdots \to I^m\to 0
$$
be a finite-length resolution of $k_{\widehat{X}}$ by $c$-soft $G$-sheaves\footnote{Such a resolution always exists: see Remark \ref{dim rem} above.} of $k$-modules, and let $\Gamma_c(I^{-\bullet})$ denote the induced chain complex 
$$
\Gamma_c(\widehat{X},I^m)\leftarrow \cdots \leftarrow \Gamma_c(\widehat{X},I^0)
$$
of $G$-modules, where $\Gamma_c(\widehat{X},I^m)$ appears in degree $-m$ and $\Gamma_c(\widehat{X},I^0)$ in degree zero.  Let $k$ be a Pr\"{u}fer domain\footnote{See Remark \ref{k rem} above for more about what this assumptions means, and examples.} such that $n^{-1}$ exists in $k$ whenever there is an order $n$ stabilizer of a point $x\in X$.  Let $\underline{E}G$ be the universal $G$-space for proper actions.  

Then for each $n\in \Z$, there is a canonical isomorphism
$$
H^n_{G,k,!}(\underline{E}G,X)\cong H_{-n}(G,\Gamma_c(I^{-\bullet}))
$$ 
between the Baum-Schneider bivariant cohomology groups and the group hyperhomology with coefficients in the $G$-$k$ chain complex $\Gamma_c(I^{-\bullet})$.
\end{proposition*}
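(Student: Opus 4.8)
The plan is to adapt the arguments of Schneider \cite{Schneider:1998aa} and Baum--Schneider \cite{Baum:2002aa}, who treat $k=\C$, while tracking where the coefficient ring enters so as to see that a Pr\"{u}fer domain with the stated invertibility hypothesis is enough. The proof has three parts: (i) rewrite the left-hand side as a filtered colimit of $\text{Hom}$-groups in a derived category; (ii) realise the right-hand side as the hyperhomology of the explicit complex $\Gamma_c(I^{-\bullet})$; and (iii) build a comparison between the two out of the $G$-CW structure of $\underline{E}G$, using Poincar\'{e}--Lefschetz duality on the cells.

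For (i) and (ii), first note that since $c$-soft $G$-sheaves are $R\Gamma_c$-acyclic (Lemma \ref{f-inj}), the given resolution gives $R\Gamma_c k_{\widehat{X}}\simeq \Gamma_c(I^\bullet)$ in $\dcc(G\text{-}k[G_{tor}])$, so that
\[
H^n_{G,k,!}(\underline{E}G,X)\ =\ \lim_{Y}\ \text{Hom}_{\dcc(G\text{-}k[G_{tor}])}\big(R\Gamma_c k_{\widehat{Y}},\ \Gamma_c(I^\bullet)[-n]\big),
\]
the colimit being over $G$-compact $G$-invariant subspaces $Y\subseteq\underline{E}G$. It is convenient to invoke Lemma \ref{bs=r} here: it splits the category $G\text{-}k[G_{tor}]$ (and hence the whole computation) as a direct sum over $c\in G_{tor}//G$ of the analogous, non-hatted data for the centraliser $Z(g_c)$ acting on the fixed sets $(\underline{E}G)^{g_c}$ and $X^{g_c}$. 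A filtered colimit commutes with this direct sum, and restriction to $g_c$-fixed points identifies the colimit over $G$-compact $Y\subseteq\underline{E}G$ with the colimit over $Z(g_c)$-compact subspaces of $(\underline{E}G)^{g_c}$, which is a contractible proper $Z(g_c)$-CW complex (its fixed sets under finite subgroups of $Z(g_c)$ are fixed sets of finite subgroups of $G$ on $\underline{E}G$, hence contractible). This reduces matters to the single-group statement: for a countable group $H$, a contractible proper $H$-CW complex $E$, and an $H$-space $W$ of finite $c$-$k$-cohomological dimension with $c$-soft resolution $J^\bullet$ of $k_W$,
\[
\lim_{Y'\subseteq E}\ \text{Ext}^n_{H\text{-}k}\big(R\Gamma_c k_{Y'},\ \Gamma_c(J^\bullet)\big)\ \cong\ H_{-n}\big(H,\Gamma_c(J^{-\bullet})\big);
\]
the reassembly of $\bigoplus_{c}H_*(Z(g_c),-)$ into $H_*(G,\Gamma_c(I^{-\bullet}))$ is then exactly the induction-and-Shapiro argument used in the proof of Proposition \ref{Rem:ComHGamma}, promoted from modules to complexes.

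For (iii), the single-group statement is attacked through the $H$-CW filtration of $E$. Its cellular chain complex is a resolution of the trivial module $k$ by $H$-$k$-modules of the form $\bigoplus_i k[H/F_i]$ with the $F_i$ the (finite) cell stabilisers; contractibility of $E$ and of its finite-subgroup fixed sets is what makes this a resolution. Although the modules $k[H/F_i]$ need not be projective over $kH$, the hypotheses on $k$ are exactly what makes them usable: since $k$ is a Pr\"{u}fer domain the compactly-supported function modules $\Gamma_c(W,J^j)$ are $k$-torsion free, hence flat, so the relevant base-change and Mackey arguments apply; and since every finite stabiliser occurring in $W$ (equivalently, in $X$) has order invertible in $k$, the restriction of each $\Gamma_c(W,J^j)$ to any finite subgroup of $H$ is cohomologically trivial, so $k[H/F_i]$ is acyclic for the functors $\text{Ext}^*_{H\text{-}k}(-,\Gamma_c(J^j))$. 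Splicing the cellular resolution of $k$ against the $c$-soft resolution $J^\bullet$ yields a bicomplex computing both sides, and passing to the filtered colimit over $Y'$ (matched with the colimit defining the $!$-groups) identifies $\lim_{Y'}\text{Ext}^n_{H\text{-}k}(R\Gamma_c k_{Y'},\Gamma_c(J^\bullet))$ with $H_{-n}$ of $\Gamma_c(J^{-\bullet})\otimes^{L}_{kH}k=L_H(\Gamma_c(J^{-\bullet}))$, which by Definition \ref{ghh} is $H_{-n}(H,\Gamma_c(J^{-\bullet}))$.

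The main obstacle is step (iii): the passage from the contravariant, sheaf-cohomological object $\lim_{Y'}\text{Ext}^*(R\Gamma_c k_{Y'},-)$ on the left to the covariant group-hyperhomology object $H_*(H,-)$ on the right. This is in essence a Poincar\'{e}--Lefschetz duality on $E$ carried out inside the derived category of equivariant sheaves, and it requires care about: (a) keeping the filtered colimits over $G$-compact subspaces synchronised on the two sides; (b) the interplay of the $k[G_{tor}]$-module structure on the Baum--Schneider side with the plain $k$-module structure on the hyperhomology side, handled by the category splitting of Lemma \ref{bs=r}; (c) verifying that the invertibility hypothesis on $k$ is precisely strong enough to make every permutation module arising from the cells acyclic for the functors that occur, so that no higher derived contribution is lost (Remark \ref{k rem} shows it cannot be dropped); and (d) the degree bookkeeping in the duality, which is cleanest $\Z/2$-gradedly and which is all that is needed for the application in Theorem \ref{main}.
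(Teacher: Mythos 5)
Your outline follows the same Schneider/Baum--Schneider blueprint as the paper, and the one genuinely different structural choice -- using Lemma \ref{bs=r} to split into conjugacy classes and reduce to the centralizers $Z(g_c)$ acting on fixed sets, reassembled afterwards by induction and Shapiro as in Proposition \ref{Rem:ComHGamma} -- is a legitimate alternative to the paper's route, which stays inside $G$-$k[G_{tor}]$ throughout and only splits off the part supported on $S_X$. However, as written the argument has two genuine gaps at exactly the technical core. First, you never compute $R\Gamma_c k_{Y'}$ (equivalently $R\Gamma_c k_{\widehat{Y}}$). The left-hand side is, by definition, a colimit of $\text{Ext}$ groups whose \emph{first} argument is this derived object, and to compute them you must identify $R\Gamma_c k_{Y'}$ in the equivariant derived category with an explicit complex of permutation-type modules attached to the cells of $Y'$, with the correct differentials; this is the paper's Lemma \ref{g-fin}, proved via the skeletal filtration and a three-complex comparison, and it is one of the two hardest steps. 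Your proposal substitutes for it the remark that the cellular chain complex of $E$ is a resolution of the trivial module $k$ -- but that is a different object ($R\Gamma_c k_{Y'}$ computes compactly supported cohomology of $Y'$, not $k$ in degree zero), so the bicomplex you claim ``computes both sides'' is not yet constructed. Relatedly, the passage from the contravariant $\text{Ext}$ to covariant homology is not a Poincar\'{e}--Lefschetz duality on the cells: in the paper it is the purely algebraic adjunction for permutation modules over finite cell stabilizers plus an invariants-equals-coinvariants (norm map) identification, i.e.\ Lemma \ref{adj fun} together with the projectivity statement Lemma \ref{proj 1}; no duality on $E$ enters.

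Second, the pivotal assertion that $\Gamma_c(W,J^j)$ is cohomologically trivial over \emph{every} finite subgroup $F$ of $H$ is precisely the hard sheaf-theoretic content, and you assert it rather than prove it. Note that the $F$ arising are arbitrary cell stabilizers of $\underline{E}G$ (or of its fixed sets), whose orders are \emph{not} assumed invertible in $k$; the hypothesis only controls orders of stabilizers of points of $X$. The claim is in fact true under the hypotheses, but it requires a genuine argument -- for instance pushing forward along $W\to W/F$, observing that the stalks of $\pi_!J^j$ are induced from point stabilizers $F_w$ of invertible order and hence stalkwise cohomologically trivial, and then using $c$-softness (plus closure of $c$-soft sheaves under kernels of surjections) to pass from stalks to compactly supported sections -- and this carries the same weight as the paper's Lemmas \ref{co inv}, \ref{g-mod proj} and \ref{g mod lem} (the flatness and exact-rows/chain-contraction machinery), which is exactly the work your outline elides. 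Both of your acyclicity claims (for the functors $\text{Ext}^*_{H\text{-}k}(-,\Gamma_c(J^j))$ and for the Tor functors implicit in replacing a projective resolution of $k$ by the cellular complex) depend on it, as does the norm-map step hidden in matching invariants with coinvariants. Finally, your displayed single-group statement omits the invertibility hypothesis entirely, and in that generality it is false: the computation in Remark \ref{k rem} (with $H=C_m$, $W$ a point, $k=\Z$) gives group cohomology on one side and group homology on the other.
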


The proof of Proposition \ref{bcr and gh} is long; we summarize the main steps here.
\begin{enumerate}
\item[\textbf{Step 1}] (See subsection \ref{s1} below).  Compute $R\Gamma_c k_{\widehat{Y}}$ when $Y$ is a proper, $G$-finite $G$-simplicial complex satisfying an appropriate orientation assumption.  The key point, which is important for Step 3, is that $R\Gamma_c k_{\widehat{Y}}$ can be computed by a complex of projective $G$-$k[G_{tor}]$ modules.
\item[\textbf{Step 2}] (See subsection \ref{s2} below). Compute $R\Gamma_ck_{\widehat{X}}$ when $X$ has finite $c$-$k$-cohomological dimension.  The key point, important for Step 5, is that this can be computed by modules that have good flatness properties (they are not flat in the category of $G$-$k[G_{tor}]$ modules, however).  This uses standard machinery from sheaf theory.
\item[\textbf{Step 3}] (See subsection \ref{s3} below).  Compute $H^*_{G,k,c}(Y,X)=\text{Ext}^*_{G\text{-}k[G_{tor}]}(R\Gamma_c k_{\widehat{Y}},R\Gamma_c k_{\widehat{X}})$.  Having done the computations in Steps 1 and 2, this is a direct algebraic computation.
\item[\textbf{Step 4}] (See subsection \ref{s4} below).  Compute $H^*_{G,k,!}(\underline{E}G,X)$ as the cohomology of an explicit double complex. This is done by writing $\underline{E}G$ as an increasing union of simplicial complexes satisfying the assumptions of Step 1; this is the shortest step.
\item[\textbf{Step 5}] (See subsection \ref{s5} below).  Compute $H_*(G,\Gamma_c(I^{-\bullet}))$.  The point is to show that this can be computed from the same double complex that we found in Step 4.  
\end{enumerate}

The most technical steps are 1 and 5; both adapt fairly standard ideas from algebraic topology.  The argument is based on the material from \cite[Section 1.B]{Baum:2002aa} and \cite[Section 4]{Schneider:1998aa}.  However, we need some refinements based on the fact that we are working with more general rings than $k=\C$; we also provide more details and complete references than in these sources for the benefit of non-expert readers.

\subsection{Step 1: computation of $R\Gamma_ck_{\widehat{Y}}$ for $Y$ an appropriate simplicial complex}\label{s1}

We need some more definitions.  The following conventions are based on \cite[page 317]{Baum:2002aa}.  

\begin{definition}\label{gsc}
A \emph{$G$-simplicial complex} $Y$ consists of a set $Y_0$ and for each $i\geq 1$ a collection $Y_i$ of $(i+1)$-element subsets of $Y_0$ with the following properties:
\begin{enumerate}[(i)]
\item if $\sigma\in Y_i$ and $\eta\subseteq \sigma$ has $j+1$ elements, then $\eta\in Y_j$;
\item $G$ acts on $Y_0$ in such a way that the induced action on the power set of $Y_0$ preserves each $Y_i$.
\end{enumerate}
The elements of $Y_i$ are called \emph{$i$-simplices} and the elements of $Y_0$ are (also) called vertices.  If $Y_d\neq \varnothing$ and $Y_i=\varnothing$ for $i>d$, then $d$ is called the \emph{dimension} of $Y$; if such a $d$ exists, then $Y$ is \emph{finite dimensional}.

The \emph{geometric realization} of $Y$, denoted $|Y|$, is the subset of $[0,1]^{Y_0}$ consisting of all tuples $(t_{\sigma})_{\sigma\in Y_0}$ such that $\{\sigma\in Y_0\mid t_\sigma\neq 0\}$ is an element of $Y_i$ for some $i$, and such that $\sum_{\sigma\in Y_0}t_\sigma=1$.  It is equipped with the induced $G$-action from the $G$-action on $[0,1]^{Y_0}$.  For each finite subset $S$ of $Y_0$, the set $|Y|_S:=\{(t_{\sigma})_{\sigma\in Y_0}\in |Y|\mid t_\sigma=0 \text{ for } \sigma\not\in S\}$ is given the subspace topology it inherits from $[0,1]^S$.  The space $|Y|$ is then equipped with the direct limit topology it inherits by writing it as a union $|Y|=\bigcup_{S\subseteq Y_0 \text{ finite}}|Y|_S$, i.e.\ a subset $U$ of $|Y|$ is open if and only if $U\cap |Y|_S$ is open for all finite $S\subseteq Y_0$.

A $G$-simplicial complex $Y$ is:
\begin{enumerate}[(a)]
\item \emph{proper} if the induced action on $|Y|$ is proper (in particular, this implies that for each $i$ and each $\sigma\in Y_i$, the stabilizer $G_\sigma$ is finite);
\item \emph{$G$-finite} if it is finite dimensional and such that $Y_i/G$ is finite or all $i$;
\item \emph{type-preserving} if for any simplex $\sigma=\{\sigma_0,...,\sigma_i\}\in Y_i$ (with each $\sigma_i\in Y_0$) the induced action of $G_\sigma$ on $\{\sigma_0,...,\sigma_i\}$ is trivial\footnote{In other words, if an element of $G$ fixes a simplex, then it fixes all vertices of that simplex.};
\item \emph{$G$-oriented} if there is a fixed $G$-invariant partial order on $Y_0$ that restricts to a total order on each simplex\footnote{As intimated, for example, on \cite[page 107]{Hatcher:2002ud}, this structure enables one to carry out simplicial-type homology computations; moreover, as the structure is $G$-invariant, we will be able to do so equivariantly}.
\end{enumerate}
\end{definition}

\begin{remarks}\label{gsc rem}
\begin{enumerate}[(i)]
\item If $Y$ is $G$-finite and if all simplex stabilizers $G_\sigma$ are finite, then $|Y|$ is a locally compact, proper, Hausdorff $G$-space.
\item \label{g-or rem} If $Y$ is a $G$-simplicial complex, then a $G$-orientation of $Y$ exists if and only if each $G$-orbit in $Y_0$ intersects any simplex at most once.  Indeed, if $Y$ has a $G$-orientation $<$, then by $G$-invariance, any distinct points of $Y_0$ in the same $G$-orbit cannot be related by $<$; as $<$ restricts to a total order on each simplex, each $G$-orbit can intersect each simplex at most once.   Conversely, assume every $G$-orbit in $Y_0$ intersects each simplex at most once.  Choose any total ordering $<_G$ on $Y_0/G$, and define a partial order on $Y_0$ by stipulating $\sigma<\eta$ if and only if $[\sigma]<_G [\eta]$.  As no two points of any simplex are in the same orbit, this restricts to a total order on each simplex.

In particular, it follows from the discussion above that if $Y$ admits a $G$-orientation, then the action is necessarily type-preserving.  Nonetheless, we typically state both assumptions separately in the results that follow as this seems more explicit.
\item \label{bary rem} If $Y$ is a $G$-simplicial complex, its \emph{barycentric subdivision} $Y^{(b)}$ is the simplicial complex with vertex set consisting of all simplices $Y$, and where a collection $\{\sigma_0,...,\sigma_i\}$ form an $i$-simplex if there are proper\footnote{We use the symbol ``$\subsetneq$'' for a proper inclusion.} inclusions $\sigma_0\subsetneq \sigma_1\subsetneq \cdots\subsetneq  \sigma_i$.  The $G$-action on $Y$ naturally induces an action on $Y^{(b)}$.  There is moreover a canonical ordering on $Y^{(b)}_0$ given by stating that $\eta\leq \sigma$ if $\eta\subseteq \sigma$; hence $Y^{(b)}$ is canonically $G$-oriented (and type-preserving).  

As $Y$ and $Y^{(b)}$ have the same geometric realization, this suggests in particular that the type-preserving and $G$-orientedness assumptions are not too onerous.
\end{enumerate}
\end{remarks}

We are now ready to define one of the key objects needed for our computation of $R\Gamma_ck_{\widehat{Y}}$.

\begin{definition}\label{simp der def}
Let $Y$ be a $G$-finite, proper, type-preserving and $G$-oriented $G$-simplicial complex of dimension $d$.  For each $i\in \{0,...,d\}$, define $M_i:=\bigoplus_{\sigma\in Y_i}k[G_\sigma]$.  We make each $M_i$ a $G$-$k[G_{tor}]$-module by letting $k[G_{tor}]$ act by pointwise multiplication on each summand, and via the $G$-action
\begin{equation}\label{g act}
g\cdot (a_\sigma)_{\sigma\in Y_i}=(ga_{g^{-1}\sigma}g^{-1})_{\sigma\in Y_i}
\end{equation}
(this makes sense as if $h\in G_{g^{-1}\sigma}$ then $ghg^{-1}$ is in $G_\sigma$).

For each $i$, each $\eta\in Y_i$ and for each $j\in \{0,...,i\}$, let $\eta^{(j)}$ denote the $j^\text{th}$ face of $\eta$; i.e.\ if $\eta=\{\eta_0,...,\eta_i\}$ (with the order induced by the $G$-orientation), $\eta^{(j)}$ has the same vertices as $\eta$ but with the $j^\text{th}$ vertex removed.  Define a map $\partial_j:k[G_\sigma]\to M_{i+1}$ by stipulating that the component of $\partial_j(a)$ in the summand $k[G_\eta]$ is $a|_{G_\eta}$ if $\eta^{(j)}=\sigma$, and $0$ otherwise; note that if $\eta^{(j)}=\sigma$ then $G_\eta$ is a subgroup of $G_\sigma$ by our type preserving assumption so the restriction $a|_{G_\eta}$ makes sense; note also that $\partial_j(a)$ can only have finitely many non-zero components as the assumptions that $Y$ is proper and $G$-finite imply that $\sigma$ can only appear as a face of finitely many higher-dimensional simplices.   Define $\partial_j:M_i\to M_{j+1}$ by using the previous definition on each summand $k[G_\sigma]$, which is a map of $G$-$k[G_{tor}]$-modules.  Define 
$$
\partial:M_i\to M_{i+1},\quad \partial:=\sum_{j=0}^n (-1)^j \partial_j.
$$
The resulting sequence 
$$
\bigoplus_{\sigma\in Y_0}k[G_\sigma]\stackrel{\partial}{\to} \cdots \stackrel{\partial}{\to} \bigoplus_{\sigma\in Y_d} k[G_\sigma]
$$
of abelian $G$-$k[G_{tor}]$ modules is then a cochain complex\footnote{It might help situate some readers to point out that the maps $\partial_j$ make the sequence $(M_i)_{i=0}^d$ into a \emph{semi-simplicial object} in $G$-$k[G_{tor}]$-mod in the sense of \cite[Definition 8.1.9]{Weibel:1995ty}, and the chain complex above is the usual (unnormalized) associated complex as in \cite[Definition 8.2.1]{Weibel:1995ty}.}, which we call the \emph{basic complex} of $Y$.
\end{definition}

If $Y$ is a $G$-simplicial complex, we abuse notation slightly by writing $\widehat{Y}$ for the corresponding $G$-space from Definition \ref{z hat} that should more properly be called $\widehat{|Y|}$.  

We need to recall some definitions and basic facts from sheaf theory.

\begin{definition}\label{g r}
Let $F$ be a sheaf over a topological space $Z$, and write $F_z$ for the stalk of $F$ over $z\in Z$.  The \emph{sheaf of discontinuous sections of $F$} is the sheaf $F_{disc}$ whose sections $F_{disc}(U)$ over an open set $U\subseteq Z$ are given by $F_{disc}(U):=\prod_{z\in U}F_z$.  Note that we have a canonical embedding of $F$ into $F_{disc}$: over an open set $U$, this is defined by sending a section $s\in F(U)$ to the section $(s_z)_{z\in U}\in F_{disc}(U)$ consisting of all its germs over $U$.  Note that if $F$ is a $G$-sheaf, then $F_{disc}$ is also a $G$-sheaf.

The \emph{Godement resolution} $0\to F\to F^0\to F^1\to \cdots$ of $F$ is defined by first taking $F^0=F_{disc}$ equipped with the canonical embedding of $F$ in $F_{disc}$, then taking the quotient sheaf $F^0/F$, embedding that in the corresponding sheaf $F^1:=(F/F^0)_{disc}$ of possibly discontinuous sections, and so on.
\end{definition}

Compare \cite[Definition 1.8.2]{Kashiwara:1990aa} for the following definition.

\begin{definition}\label{f-inj}
    Let $K:\mathcal{A}\to \mathcal{B}$ be an additive functor between abelian categories.  A full additive subcategory $\mathcal{I}$ of $\mathcal{A}$ is \emph{injective with respect to $K$} if the following hold:
\begin{enumerate}[(i)]
\item any object of $\mathcal{A}$ embeds\footnote{In a general abelian category, an \emph{embedding} is a morphism with zero kernel.} in an object from $\mathcal{I}$; 
\item if $i:I\to J$ is an embedding in $\mathcal{I}$ then the cokernel of $i$ is in $\mathcal{I}$; 
\item the restriction of $K$ to $\mathcal{I}$ takes short exact sequences to short exact sequences.  
\end{enumerate}
\end{definition}

We now put these definitions to use in an auxiliary lemma.  Most of this is implicit in \cite[Proposition 2]{Schneider:1998aa}.

\begin{lemma}\label{g c soft}
Let $Z$ be a locally compact, Hausdorff $G$-space.  
\begin{enumerate}[(i)]
\item \label{gcs god} If $F$ is a $G$-sheaf sheaf over $Z$ (or over $\widehat{Z}$), then the Godement resolution from Definition \ref{g r} consists of $c$-soft $G$-sheaves in the sense of Definition \ref{c-soft}.
\item \label{gcs inj} The full subcategory of $\text{Sh}_G(\widehat{Z})$ of $c$-soft $G$-sheaves of $k$-modules is injective (in the sense of Definition \ref{f-inj}) with respect to the functor $\Gamma_c:\text{Sh}_G(\widehat{Z})\to G\text{-}k[G_{tor}]$ from Definition \ref{gammac}.  
\item \label{gcs r} If $F$ is a $G$-sheaf over $\widehat{Z}$ and if  
$$0\to F\to F^0\to F^1\to \cdots$$ 
is a resolution in $\text{Sh}_G(\widehat{Z})$ consisting of $c$-soft $G$-sheaves, then $R\Gamma_cF$ is isomorphic in $\dcc(G\text{-}k[G_{tor}])$ to the complex 
$$
\Gamma_c(F^0)\to\Gamma_c(F^1)\to \cdots.
$$
\end{enumerate}
\end{lemma}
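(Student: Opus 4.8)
The plan is to handle the three parts in order, with part (iii) following formally from (i) and (ii) by a standard principle of homological algebra; the genuine content is sheaf-theoretic and concentrated in (i) and (ii). First I would note that $\widehat{Z}$ is a closed subspace of $Z\times G_{tor}$ -- for each fixed $g\in G_{tor}$ the fixed-point set $\{z : gz=z\}$ is closed since $Z$ is Hausdorff -- so $\widehat{Z}$ is itself locally compact and Hausdorff and the notion of $c$-softness makes sense there.

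For part (i), I would recall two classical facts. First, for any sheaf $F$ the sheaf $F_{disc}=\prod_z F_z$ of discontinuous sections is flabby: a section over an open $U$ is an arbitrary element of $\prod_{z\in U}F_z$ and extends over all of $Z$ by choosing germs arbitrarily off $U$. Second, a flabby sheaf on a locally compact Hausdorff space is $c$-soft (see e.g.\ \cite[Chapter II]{Bredon:1997aa} or \cite[\S 2.5]{Kashiwara:1990aa}). Since every term $F^i$ of the Godement resolution is by construction the sheaf of discontinuous sections of $F$ (for $i=0$) or of a successive quotient sheaf (for $i\geq 1$), each $F^i$ is flabby, hence $c$-soft. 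The equivariant refinement is then essentially free: as noted when the Godement resolution was defined (Definition \ref{g r}), $F_{disc}$ is canonically a $G$-sheaf, quotients of $G$-sheaves are $G$-sheaves, the structure maps of the resolution are $G$-sheaf morphisms, and being a $c$-soft $G$-sheaf means precisely that the underlying sheaf is $c$-soft (Definition \ref{c-soft}).

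For part (ii), I would verify the three conditions of Definition \ref{f-inj} for the full subcategory $\mathcal{I}\subseteq \text{Sh}_G(\widehat{Z})$ of $c$-soft $G$-sheaves and the functor $\Gamma_c$. Condition (i) is immediate from part (i), since $F\hookrightarrow F_{disc}$ embeds any $G$-sheaf into a $c$-soft one. Condition (ii) asks that the cokernel of an embedding $I\to J$ in $\mathcal{I}$ be $c$-soft; since cokernels in $\text{Sh}_G(\widehat{Z})$ have the same underlying sheaf as in the non-equivariant category, this reduces to the classical statement that $F''$ is $c$-soft whenever $0\to F'\to F\to F''\to 0$ is exact with $F'$ and $F$ $c$-soft (\cite[Chapter II]{Bredon:1997aa}). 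Condition (iii) asks that $\Gamma_c$ send a short exact sequence $0\to I'\to I\to I''\to 0$ of $c$-soft $G$-sheaves to a short exact sequence of $G$-$k[G_{tor}]$-modules: left exactness of $\Gamma_c$ is already known, surjectivity of $\Gamma_c(I)\to\Gamma_c(I'')$ follows from $c$-softness of the kernel $I'$ (again a standard fact from sheaf theory), and since exactness in $G$-$k[G_{tor}]$ is detected on underlying abelian groups, the $G$-equivariance and $k[G_{tor}]$-linearity of all the maps contribute nothing extra.

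Finally, for part (iii), I would invoke the general fact that a subcategory of an abelian category which is injective with respect to an additive functor $K$ in the sense of Definition \ref{f-inj} may be used to compute $RK$: once (i) and (ii) are established, any resolution $0\to F\to F^\bullet$ in $\text{Sh}_G(\widehat{Z})$ by $c$-soft $G$-sheaves (possibly of infinite length, but bounded below, which is all $\dcc$ requires) yields $R\Gamma_c F\cong \Gamma_c(F^\bullet)$ in $\dcc(G\text{-}k[G_{tor}])$, with no need for $F^\bullet$ to consist of injective objects of $\text{Sh}_G(\widehat{Z})$; this is the bounded-below cochain-complex analogue of the statement that acyclic resolutions compute derived functors (compare \cite[\S 1.8]{Kashiwara:1990aa} or \cite[\S 10.5]{Weibel:1995ty}). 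The only place where anything substantive happens is part (ii), and the main thing to get right there is simply that the classical non-equivariant facts about $c$-soft sheaves transfer to $G$-sheaves -- which works because kernels, cokernels, images and exactness in both $\text{Sh}_G(\widehat{Z})$ and $G$-$k[G_{tor}]$ are computed on the underlying non-equivariant objects, while $c$-softness of a $G$-sheaf is by definition a property of its underlying sheaf.
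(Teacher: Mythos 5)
Your proposal is correct and follows essentially the same route as the paper: the Godement terms are sheaves of discontinuous sections, hence $c$-soft $G$-sheaves; $\Gamma_c$-injectivity of the $c$-soft subcategory is reduced to the classical non-equivariant facts via the observation that exactness in $\text{Sh}_G(\widehat{Z})$ and in $G$-$k[G_{tor}]$ is detected after forgetting the extra structure; and part (iii) is the general principle that resolutions by objects from a $K$-injective subcategory compute the total derived functor (the paper cites \cite[Proposition 1.8.3]{Kashiwara:1990aa} for this). Your small detour through flabbiness in (i) and your explicit verification of the three conditions of Definition \ref{f-inj} in (ii), where the paper simply cites \cite[page 105]{Kashiwara:1990aa}, are cosmetic differences only.
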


\begin{proof}
For point \eqref{gcs god}, it suffices to show that if $F$ is a $G$-sheaf, then the sheaf $F_{disc}$ inherits a $G$-action from $F$, and is $c$-soft; both of these points are immediate.

For point \eqref{gcs inj}, note first that any $G$-sheaf $F$ embeds in $F_{disc}$, which is a $c$-soft $G$-sheaf by \eqref{gcs god}.  On the other hand, the subcategory of the category $\text{Sh}(\widehat{Z})$ of non-equivariant sheaves consisting of $c$-soft sheaves is injective with respect to the functor $\Gamma_c:\text{Sh}(\widehat{Z})\to k\text{-mod}$ by the discussion on \cite[Page 105]{Kashiwara:1990aa}; as a sequence in $\text{Sh}_G(\widehat{Z})$ (respectively, in $G\text{-}k[G_{tor}]\text{-mod}$) is exact if and only if its image under the forgetful functor to $\text{Sh}(\widehat{Z})$ (respectively, to $k\text{-}\text{mod}$) is exact, this gives the $\Gamma_c$-injectivity result.

Point \eqref{gcs r} follows from the general fact that given an additive functor $K:\mathcal{A}\to \mathcal{B}$ between abelian categories and object $A$ of $\mathcal{A}$, the image of $A$ under the derived functor $RK:\dcc(\mathcal{A})\to \dcc(\mathcal{B})$ can be computed by taking the image of a $K$-injective resolution of $A$ under $K$: see \cite[Proposition 1.8.3]{Kashiwara:1990aa}.  
\end{proof}

We need one more piece of notation, which is based on \cite[I.2.6]{Bredon:1997aa} (see also \cite[page 93]{Kashiwara:1990aa} for an equivalent definition in the language of presheaves).

\begin{definition}\label{f sub z}
Let $Z$ be a topological space, and let $Y\subseteq X$ denote a locally closed subspace.  Let $F$ denote a sheaf of $k$-modules over $Z$ with associated \'{e}tale space $\pi:\mathcal{F}\to Z$.  We denote by $F_Y$ the sheaf whose \'{e}tale space is 
$$
\{f\in \mathcal{F}\mid \pi(f)\in Y \text{ or } f=0\}
$$
equipped with the unique topology for which $\pi^{-1}(Y)$ is embedded, and for which the restriction of $\pi$ is a local homeomorphism.
\end{definition}

Note that if $Z$ is a $G$-space and $Y$ is a locally closed $G$-invariant subspace, then $F_Y$ is canonically a $G$-sheaf for the restricted action.

\begin{lemma}\label{g-fin}
Let $Y$ be a proper, $G$-finite, type-preserving, $G$-oriented $G$-simplicial complex of dimension $d$.  Let $k$ be a commutative unital ring and let $k_{\widehat{Y}}$ denote the $G$-sheaf of locally constant $k$-valued functions on the space $\widehat{Y}$ of Definition \ref{z hat}.   Then the image $R\Gamma_ck_{\widehat{Y}}$ of $k_{\widehat{Y}}$ under the total derived functor $R\Gamma_c$ of Definition \eqref{r der} is isomorphic in $\dcc(G\text{-}k[G_{tor}])$ to the basic complex 
$$
\bigoplus_{\sigma\in Y_0}k[G_\sigma]\stackrel{\partial}{\to} \cdots \stackrel{\partial}{\to} \bigoplus_{\sigma\in Y_d} k[G_\sigma]
$$
of Definition \ref{simp der def}.
\end{lemma}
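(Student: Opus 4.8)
The plan is to apply Lemma~\ref{g c soft}\eqref{gcs r}: it suffices to produce one resolution of $k_{\widehat{Y}}$ by $c$-soft $G$-sheaves on $\widehat{Y}$ whose complex of compactly supported sections is the basic complex of Definition~\ref{simp der def}. The starting point is the decomposition $\widehat{Y}=\{(z,g)\in|Y|\times G_{tor}\mid gz=z\}=\bigsqcup_{g\in G_{tor}}\bigl(|Y|^{g}\times\{g\}\bigr)$, where $|Y|^{g}$ is the fixed-point set of $g$. Since $Y$ is type-preserving, $|Y|^{g}$ is the realization of the full subcomplex of simplices fixed pointwise by $g$, and $\{\sigma\in Y_i\mid\bar\sigma\subseteq|Y|^{g}\}=\{\sigma\in Y_i\mid g\in G_\sigma\}$; by properness only finitely many $g$ fix a given point, so the decomposition is a locally finite clopen one, and $G$ permutes its pieces by conjugation in the $G_{tor}$-coordinate. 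Hence $k_{\widehat{Y}}=\bigoplus_{g\in G_{tor}}k_{|Y|^{g}\times\{g\}}$ as $G$-sheaves, and for a transversal $T$ of $G_{tor}//G$ (a direct sum with only finitely many nonzero summands, as $Y$ is $G$-finite) this is $\bigoplus_{g\in T}$ of the $G$-sheaf induced from the $Z(g)$-sheaf $k_{|Y|^{g}}$ on the locally finite, $Z(g)$-oriented simplicial complex $|Y|^{g}$ of dimension $\le d$.

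For each $g$ I would resolve $k_{|Y|^{g}}$ by the simplicial cochain sheaf $\mathscr{S}^\bullet(|Y|^{g})$, whose stalk at $x$ is the group of simplicial $p$-cochains of the closed star (in $|Y|^{g}$) of the carrier of $x$, with differential the simplicial coboundary (signs fixed by the $G$-orientation) and augmentation $k\to\mathscr{S}^{0}$ sending a constant to the $0$-cochain with that value on every vertex. This is a resolution of length $\le d$: a closed star is contractible, so each stalk is the acyclic augmented cochain complex of a contractible complex; each $\mathscr{S}^{p}(|Y|^{g})$ is flabby, hence $c$-soft on the locally compact Hausdorff space $|Y|^{g}$ (compare \cite[Chapter~II]{Bredon:1997aa}); the construction is functorial in $|Y|^{g}$, hence carries the $Z(g)$-action; and $\Gamma_c\bigl(|Y|^{g},\mathscr{S}^{p}(|Y|^{g})\bigr)$ is the module $\bigoplus_{\sigma\in Y_p,\,\bar\sigma\subseteq|Y|^{g}}k$ of finitely supported simplicial $p$-cochains, with the simplicial coboundary as differential.

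Extending each $\mathscr{S}^\bullet(|Y|^{g})$ ($g\in T$) by zero off the clopen set $|Y|^{g}\times\{g\}$, inducing up to a $G$-sheaf complex via $\text{Ind}_{Z(g)}^{G}$, and summing over $g\in T$ then produces a $c$-soft $G$-sheaf resolution $\mathscr{F}^\bullet$ of $k_{\widehat{Y}}$, and by Lemma~\ref{g c soft}\eqref{gcs r} one gets $R\Gamma_c k_{\widehat{Y}}\cong\Gamma_c(\widehat{Y},\mathscr{F}^\bullet)$ in $\dcc(G\text{-}k[G_{tor}])$. It remains to identify $\Gamma_c(\widehat{Y},\mathscr{F}^\bullet)$ with the basic complex. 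Since $\Gamma_c$ commutes with locally finite direct sums and with induction, its degree-$p$ term is $\bigoplus_{g\in T}\text{Ind}_{Z(g)}^{G}\bigl(\bigoplus_{\sigma\in Y_p,\,\bar\sigma\subseteq|Y|^{g}}k\bigr)$, whose natural basis is indexed by the pairs $(\sigma,h)$ with $\sigma\in Y_p$ and $h\in G_{tor}$ fixing $\bar\sigma$ pointwise; by type-preservation this index set is exactly $\bigsqcup_{\sigma\in Y_p}G_\sigma$, so the degree-$p$ term is $\bigoplus_{\sigma\in Y_p}k[G_\sigma]=M_p$. A direct check shows that under this identification the differential becomes $\partial=\sum_{j}(-1)^{j}\partial_j$ (the restriction maps $a\mapsto a|_{G_\eta}$ of Definition~\ref{simp der def} reflect the fact that $(\sigma,h)$ contributes to $(\eta,h)$ only when $h\in G_\eta$), the $G$-action becomes $g\cdot(a_\sigma)_\sigma=(g\,a_{g^{-1}\sigma}\,g^{-1})_\sigma$, and the $k[G_{tor}]$-action becomes pointwise multiplication on each summand --- precisely the structure of Definition~\ref{simp der def}.

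The main obstacle is the routine but somewhat delicate bookkeeping behind the last two steps: verifying that the simplicial cochain sheaf is a $c$-soft resolution with the claimed stalks, that extension by zero, induction, and the identification of $\Gamma_c$ with finitely supported cochains are all compatible with the $G$-action and the $k[G_{tor}]$-module structure, and that the simplicial coboundaries on the various $|Y|^{g}$ reassemble into exactly the operator $\partial$. The underlying inputs --- contractibility of closed stars, flabbiness of cochain sheaves, $\Gamma_c$ of a flabby sheaf being its module of finitely supported sections --- are classical; the content of the lemma is organizing them equivariantly. (Alternatively one could filter $k_{\widehat{Y}}$ along the skeleta of $|Y|$ and use that $R\Gamma_c$ of the constant sheaf on the stratum lying over the union of open $i$-simplices is $\bigl(\bigoplus_{\sigma\in Y_i}k[G_\sigma]\bigr)[-i]$; but one would then still have to show that the resulting tower of distinguished triangles realizes $R\Gamma_c k_{\widehat{Y}}$ as the basic complex on the nose in $\dcc(G\text{-}k[G_{tor}])$, which is no easier.)
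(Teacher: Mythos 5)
Your overall route is genuinely different from the paper's: instead of filtering $\widehat{Y}$ by skeleta and splicing the resulting exact triangles (which is what the paper does), you build one explicit equivariant resolution whose compactly supported sections are the basic complex on the nose, and the combinatorial identifications in your last two paragraphs (fixed-point subcomplexes via type-preservation, the index set $\bigsqcup_{\sigma\in Y_p}G_\sigma$, the matching of differentials, $G$-action and $k[G_{tor}]$-action) are correct.  However, there is a genuine gap at the key sheaf-theoretic step: the claim that each $\mathscr{S}^{p}(|Y|^{g})$ is flabby, hence $c$-soft, is false.  Unwinding your definition, $\mathscr{S}^{p}(|Y|^{g})\cong\bigoplus_{\tau\in Y_p,\ g\in G_\tau}(\iota_{\overline{\mathrm{St}}(\tau)})_*k_{\overline{\mathrm{St}}(\tau)}$, a direct sum of pushforwards of \emph{constant} sheaves on closed stars, and constant sheaves on positive-dimensional compact connected polyhedra are neither flabby nor ($c$-)soft.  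Already for $Y$ a single $1$-simplex (trivial group) your $\mathscr{S}^{0}$ is the constant sheaf $k^{2}$ on $[0,1]$: a section over the closed set $\{0,1\}$ taking different values near the two endpoints does not extend globally, so softness fails, and restriction from $[0,1]$ to a disconnected open set is not surjective, so flabbiness fails.  Consequently Lemma~\ref{g c soft}\eqref{gcs r} does not apply as you invoke it, and this is exactly the point the whole argument hangs on.

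The gap is repairable, but not for free.  Each summand $(\iota_{\overline{\mathrm{St}}(\tau)})_*k$ is $\Gamma_c$-acyclic, since $\overline{\mathrm{St}}(\tau)$ is a compact contractible finite subcomplex (local finiteness follows from properness plus $G$-finiteness), so $H^{n}_c$ of the summand is $H^{n}(\overline{\mathrm{St}}(\tau);k)=0$ for $n>0$; and $H^*_c$ commutes with these direct sums.  So your $\mathscr{S}^\bullet$ is a resolution by $\Gamma_c$-acyclic $G$-sheaves, and such resolutions do compute $R\Gamma_c$ --- but that is a strengthening of Lemma~\ref{g c soft}\eqref{gcs r} that you would have to state and prove (e.g.\ by checking that the full subcategory of $G$-sheaves whose underlying sheaf is $\Gamma_c$-acyclic is injective with respect to $\Gamma_c$ in the sense of Definition~\ref{f-inj}: every $G$-sheaf embeds in its Godement sheaf, and the cokernel and exactness conditions follow from the long exact sequence in $H^*_c$).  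You would also need to either define the induction $\mathrm{Ind}_{Z(g)}^G$ of equivariant sheaves you use, or, more simply, sum the closed-star resolutions over all $g\in G_{tor}$ and let $G$ permute the summands via the functoriality of the construction.  With these repairs your argument gives a cleaner proof than the paper's skeletal-filtration-and-splicing argument (the alternative you mention at the end and set aside is, in fact, the paper's proof); as written, though, the $c$-softness step fails.
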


The reader might usefully compare what follows to the standard proof that the singular and cellular cohomology of a CW complex agree: see for example \cite[pages 137-141]{Hatcher:2002ud}

\begin{proof}
For each $i\in \{0,...,d\}$, let $\widehat{Y}_i\subseteq \widehat{Y}$ be the pullback to $\widehat{Y}$ of the $i$-skeleton in $|Y|$, so $\widehat{Y}_i$ is a closed $G$-invariant subspace of $\widehat{Y}$, and for each $i<d$, $\widehat{Y}_{i+1}\setminus \widehat{Y}_i$ is locally closed and $G$-invariant.  As in \cite[Line (2.6.33), page 115]{Kashiwara:1990aa}, there is an exact triangle
\begin{equation}\label{e t 1}
\xymatrix{ F_{\widehat{Y}_{i}} \ar[dr]|-\circ &&\ar[ll]_-r F_{\widehat{Y}_{i+1}} \\
& F_{\widehat{Y}_{i+1}\setminus \widehat{Y}_i} \ar[ur]_-i & }
\end{equation}
(where we use the circled arrow to denote a map of degree one) in $\dcc(\text{Sh}_G(\widehat{Y}))$; indeed (compare \cite[Example 10.4.9]{Weibel:1995ty}), this follows as the sequence of $G$-sheaves 
$$
0\to F_{\widehat{Y}_{i+1}\setminus \widehat{Y}_i}\stackrel{i}{\to} F_{\widehat{Y}_{i+1}}\stackrel{r}{\to} F_{\widehat{Y}_{i}}\to 0
$$  
is exact, which in turn follows directly from Definition \ref{f sub z}.  As derived functors preserve exact triangles (by definition: see for example [Definitions 10.2.6 and 10.5.1] or \cite[page 38 and Definition 1.8.1]{Kashiwara:1990aa}), we see that the triangle 
$$
\xymatrix{ R\Gamma_cF_{\widehat{Y}_{i}} \ar[dr]|-\circ &&\ar[ll] R\Gamma_cF_{\widehat{Y}_{i+1}} \\
& R\Gamma_cF_{\widehat{Y}_{i+1}\setminus \widehat{Y}_i} \ar[ur] & }
$$
is exact in $\dcc(G\text{-}k[G_{tor}])$.  For simplicity (and following \cite[page 115]{Kashiwara:1990aa}), write $H_c^n(F)$ for the $n^\text{th}$ cohomology group of the cochain complex $R\Gamma_cF$.  As taking cohomology is a cohomological functor (see \cite[Definition 10.2.7 and Corollary 10.1.4]{Weibel:1995ty} or \cite[Definition 1.5.2 and Proposition 1.5.6]{Kashiwara:1990aa}), for each $i$, we thus get a long-exact sequence
{\small \begin{equation}\label{coho les}
\cdots \to H^n_c(F_{\widehat{Y}_{i+1}\setminus \widehat{Y}_i}) \to H^n_c(F_{\widehat{Y}_{i+1}})\to H^n_c(F_{\widehat{Y}_i}) \to H^{n+1}_c(F_{\widehat{Y}_{i+1}\setminus \widehat{Y}_i})\to \cdots
\end{equation}}
of $G\text{-}k[G_{tor}]$ modules.  

We specialize now to the case $F=k_{\widehat{Y}}$; to avoid double-subscripts, for a locally closed $G$-invariant subspace $Z$ of $\widehat{Y}$, we write $k_{Z}$ for what should more properly be called $(k_{\widehat{Y}})_Z$.  Let 
\begin{equation}\label{base g r}
0\to k_{\widehat{Y}}\stackrel{d_F}{\to} F^0 \stackrel{d_F}{\to} F^1 \stackrel{d_F}{\to} \cdots
\end{equation}
be the Godement resolution of $k_{\widehat{Y}}$ as in Definition \ref{g r}.  Note that if $Z$ is a locally closed $G$-invariant subspace of $\widehat{Y}$ then 
\begin{equation}\label{g r ky}
0\to k_{Z}\to F^0_Z\to F^1_Z\to \cdots
\end{equation}
is also the Godement resolution of $k_Z$.

We aim now to compute $R\Gamma_ck_{\widehat{Y}_{i+1}\setminus \widehat{Y}_i}$.  First note that we have 
\begin{equation}\label{sim decom}
\widehat{Y}_{i+1}\setminus \widehat{Y}_i=\bigsqcup_{\sigma\in Y_{i+1}}G_\sigma\times \sigma^\circ
\end{equation}
where $\sigma^\circ$ is the (topological) open simplex corresponding to $\sigma$.   Using Lemma \ref{g c soft}, $R\Gamma_ck_{\widehat{Y}_{i+1}\setminus \widehat{Y}_i}$ can be taken to be 
$$
\Gamma_c(F^0_{\widehat{Y}_{i+1}\setminus \widehat{Y}_i})\to \Gamma_c(F^1_{\widehat{Y}_{i+1}\setminus \widehat{Y}_i})\to \Gamma_c(F^2_{\widehat{Y}_{i+1}\setminus \widehat{Y}_i}) \to \cdots ,
$$
where $(F^n_{\widehat{Y}_{i+1}\setminus \widehat{Y}_i})$ is the resolution in line \eqref{g r ky}.  On the other hand, line \eqref{sim decom} implies that for each $n$
\begin{equation}\label{g c deom}
\Gamma_c(F^n_{\widehat{Y}_{i+1}\setminus \widehat{Y}_i})=\bigoplus_{\sigma\in Y_{i+1}}\bigoplus_{g\in G_\sigma} \Gamma_c(\{g\}\times\sigma^\circ,F^n).
\end{equation}
On each of the subsets $\{g\}\times \sigma^\circ$ the corresponding complex 
\begin{equation}\label{single simp}
\Gamma_c(\{g\}\times\sigma^\circ,F^0)\to \Gamma_c(\{g\}\times\sigma^\circ,F^1)\to \Gamma_c(\{g\}\times\sigma^\circ,F^2)\to\cdots 
\end{equation}
computes the compactly supported cohomology of $\{g\}\times \sigma^\circ$ with coefficients in $k$.  Indeed, sheaf cohomology with compact supports and values in the sheaf of locally constant $k$-valued functions agrees with any of the classical\footnote{For example, singular, \v{C}ech, or Alexander-Spanier: see for example \cite[pages 242-244]{Hatcher:2002ud} for compactly supported singular cohomology.} definitions of compactly supported cohomology with coefficients in $k$ (at least for reasonably `nice' spaces like $\sigma^\circ$): see for example \cite[Chapter III]{Bredon:1997aa} or \cite[pages 98-100 and Chapter VIII]{Swan:1964aa}.   In other words, the homology of the sequence in line \eqref{single simp} is just the usual cohomology with compact supports and coefficients in $k$, as computed for example in \cite[Example 3.34]{Hatcher:2002ud}:
\begin{equation}\label{dim isos}
H_c^n(\{g\}\times \sigma^\circ,k)\cong \left\{\begin{array}{ll} k & n=i+1~(=\text{dim}(\sigma^\circ)) \\ 0 & \text{otherwise} \end{array}\right..
\end{equation}
Combining this with line \eqref{g c deom} implies that 
\begin{equation}\label{hn com}
H^n_c(k_{\widehat{Y}_{i+1}\setminus \widehat{Y}_i})\cong \left\{\begin{array}{ll} \bigoplus_{\sigma\in Y_i} k[G_\sigma] & n=i+1 \\ 0 & \text{otherwise} \end{array}\right.,
\end{equation}
where the right hand side is equipped with the $G$-action defined in line \eqref{g act}.  

Now, we deduce from the long exact sequences of line \eqref{coho les} that 
$$
H^n_c(k_{\widehat{Y}_i})\cong\left\{\begin{array}{ll} H^n_c(k_{\widehat{Y}}) & n<i \\ 0 & n>i  \end{array}\right.
$$
(and we ignore the case $n=i$).  Hence the long exact sequences of line \eqref{coho les} gives rise to a collection of short exact sequences
$$
0\to H^n_c(k_{\widehat{Y}})\to H^n _c(k_{\widehat{Y}_n})\to H^{n+1}_c(k_{\widehat{Y}_{n+1}\setminus \widehat{Y}_n})\to H_c^{n+1}(k_{\widehat{Y}_{n+1}})\to 0.
$$
Splicing these together we get a diagram
{\tiny \begin{equation}\label{big diag}
\xymatrix{ & & & & 0 & \cdots  &\\  
& & 0\ar[r]  & H_c^{n+1}(k_{\widehat{Y}}) \ar[r] & H_c^{n+1}(k_{\widehat{Y}_{n+1}}) \ar[r]  \ar[u] & H_c^{n+1}(k_{\widehat{Y}_{n+2}\setminus \widehat{Y}_{n+1}})  &    \\
& & 0 & & H_c^{n+1}(k_{\widehat{Y}_{n+1}\setminus \widehat{Y}_n}) \ar[u] \ar[ur]^-{d} & &\\
0 \ar[r] & H_c^{n-1}(k_{\widehat{Y}})\ar[r] & H_c^{n-1} (k_{\widehat{Y}_{n-1}}) \ar[u] \ar[r] & H_c^{n}(k_{\widehat{Y}_{n}\setminus \widehat{Y}_{n-1}}) \ar[ur]^-{d} \ar[r] & H_c^{n}(k_{\widehat{Y}_{n}}) \ar[r] \ar[u] &  0 & \\
& & H_c^{n-1}(k_{\widehat{Y}_{n-1}\setminus \widehat{Y}_{n-2}}) \ar[u] \ar[ur]^-{d} & & H_c^n(k_{\widehat{Y}}) \ar[u]  & & \\
\ar[r] & H_c^{n-2}(k_{\widehat{Y}_{n-2}\setminus \widehat{Y}_{n-3}}) \ar[ur]^-{d} \ar[r] & H_c^{n-2}(k_{\widehat{Y}_{n-2}}) \ar[u] \ar[r] & 0 & 0\ar[u]  & & \\
\cdots & & H_c^{n-2}(k_{\widehat{Y}}) \ar[u] & & & & \\
  & & 0 \ar[u] & & & & }
\end{equation}}
\noindent{}where the arrows labeled ``$d$'' are defined to make the diagram commute.  The diagonal line gives a sequence
\begin{equation}\label{rel com}
\cdots \stackrel{d}{\to} H^{n-1}_c(k_{\widehat{Y}_{n-1}\setminus \widehat{Y}_{n-2}}) \stackrel{d}{\to} H^{n}_c(k_{\widehat{Y}_{n}\setminus \widehat{Y}_{n-1}})\stackrel{d}{\to} H^{n+1}_c(k_{\widehat{Y}_{n+1}\setminus \widehat{Y}_{n}}) \stackrel{d}{\to} \cdots ;
\end{equation}
and from exactness of the rows and columns of the diagram in line \eqref{big diag} this is a complex.  A diagram chase based on line \eqref{big diag} shows that the homology of this complex is isomorphic to $H_c^{n}(k_{\widehat{Y}})$ in degree $n$.  However, we need the more precise statement that the complex in line \eqref{rel com} is isomorphic to $R\Gamma_ck_{\widehat{Y}}$ (equivalently, the complex in line \eqref{base g r}) in the derived category $\dcc(G\text{-}k[G_{tor}])$; to establish this, we proceed as follows (compare \cite[Exercise II.21]{Kashiwara:1990aa}, which gives a similar computation in a different context).

Define now $C^n:=\Gamma_c(F^n_{\widehat{Y}_n}) / \text{Image}(d_F\circ i)$, where $d_F: \Gamma_c(F^{n-1}_{\widehat{Y}_n})\to \Gamma_c(F^n_{\widehat{Y}_n})$ is induced from the differential  in the Godement resolution (see line \eqref{base g r}), and $i:\Gamma_c(F^{n-1}_{\widehat{Y}_n\setminus \widehat{Y}_{n-1}})\to \Gamma_c(F^{n-1}_{\widehat{Y}_n})$ is induced from the inclusion of sheaves in line \eqref{e t 1}.  We define a differential $d_C:C^n\to C^{n+1}$ as follows.  First, we note that the short exact sequence underlying the exact triangle in line \eqref{e t 1} induces a short exact sequence 
$$
0\to \Gamma_c(F^n_{\widehat{Y}_{n+1}\setminus \widehat{Y}_{n}})\stackrel{i}{\to}  \Gamma_c(F^n_{\widehat{Y}_{n+1}}) \stackrel{r}{\to} \Gamma_c(F^n_{\widehat{Y}_{n}})\to 0
$$
by \cite[Proposition 2.3.6 (v), and Proposition 2.5.8]{Kashiwara:1990aa}.  The differential $d_C$ is defined by taking an element in $C^n$, using this sequence to lift it to an element of $\Gamma_c(F^n_{\widehat{Y}_{n+1}})$, taking its image under the differential $d_F:\Gamma_c(F^n_{\widehat{Y}_{n+1}})\to \Gamma_c(F^{n+1}_{\widehat{Y}_{n+1}})$ arising from the Godement resolution in line \eqref{base g r}, and then taking the quotient map from $\Gamma_c(F^{n+1}_{\widehat{Y}_{n+1}})$ to $C^{n+1}$; one checks that this is well-defined, and does indeed produce a complex $(C^\bullet,d_C)$.  We have now morphisms of complexes
\begin{equation}\label{com com}
(F^\bullet_{\widehat{Y}},d_F) \stackrel{r}{\to} (C^\bullet,d_C) \stackrel{i}{\leftarrow} H^{\bullet}(k_{\widehat{Y}_{\bullet}\setminus \widehat{Y}_{\bullet-1}},d)
\end{equation}
where the right hand side is the complex in line \eqref{rel com}, the left hand side is the complex from line \eqref{base g r}, the right arrow is induced by the restriction maps $r:F^n_{\widehat{Y}}\to F^n_{\widehat{Y}_n}$ and the right hand map is induced from the inclusion maps $F^n_{\widehat{Y}_n}\leftarrow F^{n}_{\widehat{Y}_{n}\setminus \widehat{Y}_{n-1}}:i$.  One now checks by a(n admittedly quite involved) diagram chase based on the isomorphisms in line \eqref{dim isos}, the information in line \eqref{big diag}, and the definition of the boundary map in homology induced from a short exact sequence of complexes, that the maps in line \eqref{com com} are both quasi-isomorphisms, and therefore that $(F^\bullet_{\widehat{Y}},d_F)$ and $H^{\bullet}(k_{\widehat{Y}_{\bullet}\setminus \widehat{Y}_{\bullet-1}},d)$ are isomorphic in $\dcc(G\text{-}k[G_{tor}])$ as claimed.

To complete the proof, it remains to identify the complex in line \eqref{rel com} with the complex from Definition \ref{simp der def}.  The isomorphism in line \eqref{hn com} shows that the modules appearing match.  Finally, a computation based on the isomorphisms in line \eqref{single simp} and completely analogous to that computing the boundary maps in classical cellular homology (compare \cite[pages 140-141]{Hatcher:2002ud}) shows that the boundary maps for this complex are exactly the maps $\partial$ for the complex from Definition \ref{simp der def}.  
\end{proof}

\subsection{Step 2: computation of $R\Gamma_ck_{\widehat{X}}$ for $X$ finite-dimensional}\label{s2}

In this step, our aim is a lemma showing that $R\Gamma_ck_{\widehat{X}}$ can be computed from a complex with good properties. 

We need two preliminary lemmas, which are no doubt well-known to experts.

\begin{lemma}\label{tf lem}
Let $Z$ be a topological space, let $k$ be an integral domain, and let $F$ be a sheaf of $k$-modules over $Z$.  Then the following are equivalent:
\begin{enumerate}[(i)]
\item \label{tf1} for each open $U\subseteq Z$, $F(U)$ is torsion free;
\item \label{tf2} each stalk $F_z$ is torsion free.
\end{enumerate}
Moreover, both imply the following:
\begin{enumerate}
\item[(iii)] \label{tf3} for each open $U\subseteq Z$, $\Gamma_c(U,F)$ is torsion free.
\end{enumerate} 
\end{lemma}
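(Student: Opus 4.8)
The plan is to establish \textbf{(i)}$\Leftrightarrow$\textbf{(ii)} using two standard features of sheaves, and then to get \textbf{(iii)} for free once \textbf{(i)} is known. The one fact about modules I will use repeatedly is that, over an integral domain $k$, torsion-freeness passes to submodules, to arbitrary products, and to filtered colimits (equivalently: for $0\neq a\in k$, injectivity of multiplication-by-$a$ is preserved by these operations).

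First I would prove \textbf{(i)}$\Rightarrow$\textbf{(ii)}. Writing the stalk as the filtered colimit $F_z=\varinjlim_{U\ni z}F(U)$, a torsion element of $F_z$ is the germ of some $s\in F(U)$ with $as$ vanishing in $F_z$ for some $0\neq a\in k$; then $a(s|_V)=0$ in $F(V)$ for a small enough open $V\ni z$, and torsion-freeness of $F(V)$ forces $s|_V=0$, so the germ is $0$. For the converse \textbf{(ii)}$\Rightarrow$\textbf{(i)}, the key input is the separation axiom for $F$: the germ map
\[
F(U)\;\longrightarrow\;\prod_{z\in U}F_z,\qquad s\longmapsto(s_z)_{z\in U},
\]
is injective. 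Since each $F_z$ is torsion free, so is the product on the right (as $k$ is a domain), and hence so is the submodule $F(U)$ of it.

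Finally, for \textbf{(iii)}, I would observe that $\Gamma_c(U,F)$ is a $k$-submodule of $F(U)=\Gamma(U,F)$: it is closed under addition because $\supp(s+t)\subseteq\supp(s)\cup\supp(t)$ (and a finite union of compact sets is compact), and closed under scaling because $\supp(as)\subseteq\supp(s)$ (and a closed subspace of a compact space is compact). Given that $F(U)$ is torsion free by \textbf{(i)}, its submodule $\Gamma_c(U,F)$ is torsion free as well; since \textbf{(i)} and \textbf{(ii)} are equivalent, this shows that both of them imply \textbf{(iii)}.

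I do not anticipate a serious obstacle: the argument is essentially bookkeeping. The only point that deserves care is recognizing which implication genuinely needs the sheaf axiom --- it is \textbf{(ii)}$\Rightarrow$\textbf{(i)}, via the injectivity of the germ map into the product of stalks --- whereas the other implication and the assertion about $\Gamma_c$ are purely formal module-theoretic facts.
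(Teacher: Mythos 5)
Your proposal is correct and follows essentially the same route as the paper: (i)$\Rightarrow$(ii) via the stalk as a filtered colimit of the $F(U)$, (ii)$\Rightarrow$(i) via the injective germ map $F(U)\to\prod_{z\in U}F_z$, and (iii) by realizing $\Gamma_c(U,F)$ as a submodule of $F(U)$. You have merely written out in detail the colimit step that the paper leaves as a direct check to the reader.
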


\begin{proof}
That \eqref{tf1} implies \eqref{tf2} follows as $F_z=\lim_{U\owns z} F(U)$ (by definition), and as direct limits of torsion free modules are torsion free (this follows from a direct check that we leave to the reader).  That \eqref{tf2} implies \eqref{tf1} follows as $F(U)$ can be realized as a submodule of $\prod_{z\in U}F_z$ (compare Definition \ref{g r}).  They both imply the last point as $\Gamma_c(U,F)$ is a submodule of $F(U)$.
\end{proof}

\begin{lemma}\label{l hom lem}
Let $Z$ be a locally compact Hausdorff space and let $k$ be a commutative ring with unit.  Let $F$ be a sheaf of $k$-modules over $Z$ such that each stalk $F_z$ is torsion free.  Let $\pi:\mathcal{F}\to Z$ be the corresponding \'{e}tale space of $F$.  Let $r\in k$ be non-zero, and write $m_r:\mathcal{F}\to \mathcal{F}$ be the map which multiplies by $r$ on each stalk.  Then $m_r$ is a local homeomorphism.  
\end{lemma}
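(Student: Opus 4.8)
The plan is to read the conclusion off directly from the structure of the \'etale space $\pi\colon\mathcal{F}\to Z$. Recall that the topology on $\mathcal{F}$ has as a basis the sets $\widetilde{s}(U):=\{s_z\mid z\in U\}$, where $U$ ranges over the open subsets of $Z$, $s$ over the sections in $F(U)$, and $s_z\in F_z$ denotes the germ of $s$ at $z$; moreover $\pi$ restricts to a homeomorphism $\widetilde{s}(U)\to U$ for each such $s$ and $U$, and the sets $\widetilde{s}(U)$ cover $\mathcal{F}$ since every element of a stalk is the germ of some section over some open neighbourhood. Two elementary observations drive the proof: first, $m_r$ preserves stalks, so $\pi\circ m_r=\pi$; second, $m_r$ carries germs to germs, namely $m_r(s_z)=(rs)_z$ for every section $s$, because the germ map $F(U)\to F_z$ is $k$-linear and $rs\in F(U)$.

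Given these, I would fix $f_0\in\mathcal{F}$ and write $f_0=s_{z_0}$ with $z_0:=\pi(f_0)$, for some open $U\ni z_0$ and some $s\in F(U)$. Then $V:=\widetilde{s}(U)$ is an open neighbourhood of $f_0$, and by the second observation $m_r(V)=\widetilde{rs}(U)$, which is again a basic open set of $\mathcal{F}$ because $rs\in F(U)$. It remains to see that $m_r|_V\colon\widetilde{s}(U)\to\widetilde{rs}(U)$ is a homeomorphism: but $\pi$ restricts to homeomorphisms of both $\widetilde{s}(U)$ and $\widetilde{rs}(U)$ onto $U$, and by $\pi\circ m_r=\pi$ the map $m_r|_V$ is compatible with these two restrictions and the identity of $U$; hence $m_r|_V$ is a homeomorphism onto the open set $m_r(V)$. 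Since the open sets $V$ of this form cover $\mathcal{F}$, this proves $m_r$ is a local homeomorphism.

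Finally I would record where the torsion-freeness hypothesis enters: since each stalk $F_z$ is torsion free and $r\neq 0$, multiplication by $r$ is injective on $F_z$, so $m_r$ is injective on every fibre of $\pi$, hence globally injective; combined with the above, $m_r$ is then an open embedding of $\mathcal{F}$ into itself with open image $m_r(\mathcal{F})$, which is presumably the form in which the statement is applied in the rest of the subsection. I do not expect any real obstacle here: the only thing requiring care is the standard but slightly fussy bookkeeping of the \'etale-space topology — verifying that the $\widetilde{s}(U)$ form a basis on which $\pi$ restricts to homeomorphisms, so that checking the local-homeomorphism property on them is enough — and no use is made of local compactness of $Z$ or of the finite-dimensionality hypotheses running through the surrounding results.
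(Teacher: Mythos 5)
Your argument is correct and is essentially the paper's own proof: the paper likewise takes a section $s$ of the \'{e}tale space through the given point, observes that $m_r\circ s$ is again a section (hence a local homeomorphism, since sections and $\pi$ give the basic open sets $\widetilde{s}(U)$ on which $\pi$ is a homeomorphism), and concludes by composing; your closing remark that torsion-freeness only enters to make $m_r$ injective on stalks, hence globally injective, matches the paper's first paragraph and is exactly what is used later in Lemma \ref{rgcx}.
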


\begin{proof}
The map $m_r:\mathcal{F}\to \mathcal{F}$ is continuous.  Moreover, as each $F_z$ is torsion free $m_r$ is injective when restricted to each stalk and thus globally injective.  It thus suffices to show that for any $f\in \mathcal{F}$ there is an open neighbourhood $U\owns f$ such that $m_r(U)$ is open, and such that 
$m_r:U\to m_r(U)$ is a homeomorphism between open subsets of $\mathcal{F}$.

Note that $\mathcal{F}$ is locally homeomorphic to $Z$ via the structure map $\pi:\mathcal{F}\to Z$, and that any section is a local homeomorphism (compare for example \cite[page 4]{Bredon:1997aa} for this and other basic properties used below).   

Let $f$ be an element of the stalk $F_z$, and let $s:U_0\to \mathcal{F}$ be a section with $U_0\owns z$ open, $s:U_0\to s(U_0)$ a homeomorphism, and $s(z)=f$.  Note that $m_r\circ s:U_0\to \mathcal{F}$ is also a section, whence a local homeomorphism; hence there exists open $U_1\subseteq U_0$ such that $U_1\owns z$ and such that $m_r\circ f:U_1\to m_r(f(U_1))$ is a homeomorphism.  Setting $U=f(U_1)$ we are done.
\end{proof}

Here is the main lemma from this step that will enable us to give a good model for $R\Gamma_ck_{\widehat{X}}$.  Part \eqref{rgcx 1} (at least) is likely well-known: compare for example \cite[3.3]{CrainicMoerdijk2000aa}.

\begin{lemma}\label{rgcx}
Let $Z$ be a locally compact, Hausdorff $G$-space with finite $c$-$k$-homological dimension (see Definition \ref{cdim} above).  Let $k$ be a commutative unital ring, and let $k_Z$ denote the $G$-sheaf of locally constant $k$-valued functions on $Z$.  Then there exists a finite-length resolution 
$$
0\to k_Z\to I^0\to \cdots \to I^m \to 0
$$
of $k_{Z}$ in the category of $G$-sheaves over $Z$ with the following properties:
\begin{enumerate}[(i)]
\item \label{rgcx 1} each $I^i$ is a $c$-soft $G$-sheaf;
\item \label{rgcx 2} if $k$ is moreover a Pr\"{u}fer domain (see Remark \ref{k rem} above) then for each open set $U\subseteq Z$, the space of compactly supported sections $\Gamma_c(U,I^i)$ is flat as a $k$-module.
\end{enumerate}
\end{lemma}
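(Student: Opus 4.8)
For part \eqref{rgcx 1}, I would begin with the Godement resolution $0\to k_Z\to F^0\to F^1\to\cdots$ of the $G$-sheaf $k_Z$ (Definition \ref{g r}). Since the Godement construction is functorial, every $F^i$ is automatically a $G$-sheaf, the differentials are $G$-equivariant, and each $F^i$ is $c$-soft by Lemma \ref{g c soft}\eqref{gcs god}. Setting $Z^0:=k_Z$ and $Z^i:=\operatorname{im}(F^{i-1}\to F^i)$ for $i\geq 1$, one gets short exact sequences $0\to Z^i\to F^i\to Z^{i+1}\to 0$ of $G$-sheaves, and truncating produces an exact complex $0\to k_Z\to F^0\to\cdots\to F^{m-1}\to Z^m\to 0$ of $G$-sheaves, where $m$ is the $c$-$k$-cohomological dimension of $Z$ (when $m=0$ one simply takes $I^0:=k_Z$, which is itself $c$-soft by Definition \ref{cdim}). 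I would then set $I^i:=F^i$ for $i<m$ and $I^m:=Z^m$. The only point that genuinely uses the finiteness hypothesis is that $I^m=Z^m$ is $c$-soft; for this I would appeal to the standard dimension theory of sheaves on locally compact Hausdorff spaces: since the $c$-$k$-cohomological dimension of $Z$ is at most $m$ and $F^0,\dots,F^{m-1}$ are $c$-soft, the $m$-th syzygy of a $c$-soft resolution of any sheaf of $k$-modules is again $c$-soft (see \cite[Theorem II.16.4]{Bredon:1997aa}, and compare \cite[3.3]{CrainicMoerdijk2000aa}). As $c$-softness refers only to the underlying sheaf (Definition \ref{c-soft}), this is compatible with the induced $G$-structure on $Z^m$.

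For part \eqref{rgcx 2}, assume now that $k$ is a Pr\"{u}fer domain, hence in particular an integral domain, so that Lemma \ref{tf lem} is available. It then suffices to show that every $I^i$ has torsion-free stalks: by Lemma \ref{tf lem} this gives that $\Gamma_c(U,I^i)$ is torsion free for every open $U$, which is flat over a Pr\"{u}fer domain. I would prove by induction on $i$ that both $F^i$ and $Z^i$ have torsion-free stalks, the base case being immediate since $(k_Z)_z\cong k$. For the inductive step, suppose $Z^i$ has torsion-free stalks. Then $F^i(U)=(Z^i)_{disc}(U)=\prod_{w\in U}(Z^i)_w$ is a product of torsion-free $k$-modules, hence torsion free as $k$ is a domain, so $F^i$ has torsion-free stalks by Lemma \ref{tf lem}. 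The remaining assertion, that $Z^{i+1}=F^i/Z^i$ has torsion-free stalks, is the heart of the matter: here I would observe that the evaluation map $\operatorname{ev}_z:(F^i)_z\to(Z^i)_z$ sending the germ of a family $(s_w)_w\in(Z^i)_{disc}(V)$ to $s_z$ is a well-defined $k$-linear retraction of the canonical inclusion $(Z^i)_z\hookrightarrow(F^i)_z$. Since stalks are exact, this splits the sequence $0\to(Z^i)_z\to(F^i)_z\to(Z^{i+1})_z\to 0$, so $(Z^{i+1})_z$ is isomorphic to a direct summand of the torsion-free module $(F^i)_z$ and is therefore torsion free.

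I expect the main obstacle to be exactly this last step of part \eqref{rgcx 2}: a cokernel sheaf such as $Z^{i+1}=F^i/Z^i$ need not have torsion-free stalks merely because $F^i$ does, so one genuinely needs the retraction $\operatorname{ev}_z$ — equivalently, the splitting of $(Z^i)_z$ off $(F^i)_z$ — in order to propagate torsion-freeness through the Godement syzygies. The rest should be routine: part \eqref{rgcx 1} is the familiar truncation of a functorial resolution made possible by finite cohomological dimension, and the $G$-equivariance never causes trouble, since both $c$-softness and torsion-freeness of stalks are conditions on the underlying non-equivariant sheaf.
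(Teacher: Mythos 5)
Your proposal is correct. Part \eqref{rgcx 1} is essentially the paper's own argument: truncate the Godement resolution (Definition \ref{g r}, Lemma \ref{g c soft}) after $m$ steps and invoke Bredon's dimension theory \cite{Bredon:1997aa} to see that the last syzygy is automatically $c$-soft, the $G$-structure coming along for free. For part \eqref{rgcx 2} you take a genuinely different route at the key step. The paper also reduces, via Lemma \ref{tf lem}, to showing that if a sheaf $F$ has torsion-free stalks then so do $F_{disc}$ and $F_{disc}/F$, but it proves the statement about $F_{disc}/F$ topologically: it uses Lemma \ref{l hom lem} (multiplication by a nonzero $r\in k$ is a local homeomorphism of the \'{e}tale space) to show that a possibly discontinuous section $s$ with $rs$ continuous near $z$ is itself continuous near $z$, namely $s=m_r^{-1}\circ t$ on a small neighbourhood. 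You replace this with the purely algebraic observation that the stalkwise evaluation map $(F_{disc})_z\to F_z$, sending the germ of a family $(s_w)_w$ to $s_z$, retracts the canonical inclusion, so the exact stalk sequence $0\to F_z\to (F_{disc})_z\to (F_{disc}/F)_z\to 0$ splits and the quotient stalk is a direct summand of the torsion-free module $(F_{disc})_z$. Your retraction is indeed well defined (two families representing the same germ agree near $z$, hence at $z$) and $k$-linear, so the argument is complete; it reaches the same generality as the paper's claim while bypassing Lemma \ref{l hom lem} and the continuity chase entirely, at the cost only of not exhibiting the (mildly interesting) fact that such a section $s$ is actually continuous near $z$.
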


\begin{proof}
Let $0\to k_Z\to I^0\to \cdots \to I^{m-1}$ be the first part of the Godement resolution (see Definition \ref{g r}) of $k_Z$, and let $I^m:=I^{m-1}/\text{image}(I^{m-2})$ be the corresponding quotient $G$-sheaf.  We claim that  the resolution 
\begin{equation}\label{finite gr}
0\to k_Z\to I^0\to \cdots \to I^m \to 0
\end{equation}
of $G$-sheaves has the desired properties.  

First, let us look at $c$-softness.  Lemma \ref{g c soft} part \eqref{gcs god} implies that all of $I^0$, ..., $I^{m-1}$ are $c$-soft.  To show that $I^m$ is also $c$-soft, note that the equivalence of (a) and (b) in \cite[Theorem 16.2]{Bredon:1997aa} then implies that if 
$$
0\to k_Z\to F^0\to \cdots \to F^m \to 0
$$
is any resolution with $F^0$,...,$F^{m-1}$ $c$-soft, then $F^m$ is automatically $c$-soft.  Hence the resolution in line \eqref{finite gr} does indeed consist of $c$-soft $G$-sheaves.  

Now let us look at the claimed flatness statement.  As $k$ is a Pr\"{u}fer domain, a $k$-module is flat if and only if it is torsion free.  It thus suffices to show that $\Gamma_c(U,I^i)$ is a torsion free $k$-module for all open $U\subseteq Z$, and all $i$.  For simplicity, let us say that a sheaf $F$ of $k$-modules over $Z$ is \emph{torsion-free} if each stalk $F_z$ is torsion-free as a $k$ module.  Using Lemma  Lemma \ref{tf lem}, it suffices to show that each $I^i$ is torsion free.

Clearly $k_Z$ is torsion free.  Thanks to this and the construction of the resolution in line \eqref{finite gr}, it will suffices to show the following fact: if $F$ is torsion free, and $F_{disc}$ is the corresponding sheaf of not-necessarily continuous sections (see Definition \ref{g r}), then both $F_{disc}$ and $F_{disc}/F$ are torsion free.  For $F_{disc}$, this is clear, so it will suffice to show that for each $z\in Z$, the stalk $(F_{disc}/F)_z=(F_{disc})_z/F_z$ is torsion free.  Fixing non-zero $r\in k$, it suffices to show that if $f\in (F_{disc})_z$ satisfies $rf\in F_z$, then $f\in F_z$.  Indeed, let $U\owns z$ be an open neighbourhood of $z$ such that there is a section $t\in F(U)$ extending $rf$ and a section $s\in F_{disc}(U)$ extending $f$; we think of both $s$ and $t$ as functions $U\to \mathcal{F}$ from $U$ to the \'{e}tale space underlying $F$ that are sections of the canonical quotient $\pi:\mathcal{F}\to Z$, and such that $t$ is continuous, but $s$ may not be.  Now, by definition of the direct limit, there exists open $V\subseteq U$ with $V\owns z$ and $rs=t$ in $V$.  Note that $t:V\to \mathcal{F}$ is a local homeomorphism, so shrinking $V$ further, we may assume that $t|_V$ is a homeomorphism onto its image.  Letting $m_r:\mathcal{F}\to \mathcal{F}$ be the local homeomorphism from Lemma \ref{l hom lem}, there is open $W\owns t(z)$ in $\mathcal{F}$ such that $m_r:m_r^{-1}(W)\to W$ is a homeomorphism.  Replacing $V$ with $U':=V\cap t^{-1}(m_r^{-1}(W))$, we find that $s=m_r^{-1}\circ t$ on $U'$.  As $m_r^{-1}$ and $t$ are continuous as functions $U'\to \mathcal{F}$, $s$ is too, and we are done.
\end{proof}

\subsection{Step 3: computation of $\text{Ext}_{G\text{-}k[G_{tor}]}(R\Gamma_ck_{\widehat{Y}},R\Gamma_ck_{\widehat{X}})$}\label{s3}

In this step, we show that if $Y$ is a simplicial complex satisfying the conditions from Definition \ref{simp der def} and $X$ has finite $c$-$k$-cohomological dimension, then the group $H_{G,k,c}(|Y|,X)=\text{Ext}_{G\text{-}k[G_{tor}]}(R\Gamma_ck_{\widehat{Y}},R\Gamma_ck_{\widehat{X}})$ can be computed as the homology of an appropriate double complex.

We need some preliminary lemmas.  The first lemma would be almost immediate if $k[G_{tor}]$ were unital, but this is not always the case; compare \cite[Remark 2.9]{BonickeDellAieraGabeWillett2023aa} for essentially the same point.

\begin{lemma}\label{ep}
The category of nondegenerate $G$-$k[G_{tor}]$ modules has enough projective objects.  
\end{lemma}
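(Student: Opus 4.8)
The plan is to present the category of nondegenerate $G$-$k[G_{tor}]$-modules as the category of nondegenerate modules over a single (non-unital) ring, and then to invoke the standard fact that a ring with local units has enough projectives in its category of nondegenerate modules. The ring in question is the algebraic skew group ring $A:=k[G_{tor}]\rtimes G$: as a $k$-module it consists of the finitely supported functions $G\to k[G_{tor}]$, written $\sum_{g}f_g\,\delta_g$, with multiplication $(f\delta_g)(f'\delta_h):=f\,\rho_g(f')\,\delta_{gh}$, where $\rho$ denotes the conjugation action of $G$ on $k[G_{tor}]$. The first thing I would check is that a nondegenerate left $A$-module is the same thing as a nondegenerate $G$-$k[G_{tor}]$-module in the sense of the excerpt (an $A$-module restricts to a $k[G_{tor}]$-module via $k[G_{tor}]=k[G_{tor}]\delta_e\subseteq A$, and one recovers the compatible $G$-action using local units). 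The non-unitality of $k[G_{tor}]$ is precisely the subtlety at issue here; compare \cite[Remark 2.9]{BonickeDellAieraGabeWillett2023aa}.

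The key technical point I would establish next is that $A$ has local units lying inside the subring $k[G_{tor}]$. Given finitely many elements $a_1,\dots,a_N\in A$, writing $a_i=\sum_g f^{(i)}_g\,\delta_g$, set
\[
T:=\bigcup_{i}\bigcup_{g}\bigl(\mathrm{supp}(f^{(i)}_g)\cup g^{-1}\,\mathrm{supp}(f^{(i)}_g)\,g\bigr),
\]
a finite subset of $G_{tor}$ (a conjugate of a torsion element is torsion). A short computation with the multiplication formula then shows that the indicator function $e:=\chi_T\in k[G_{tor}]\subseteq A$ is an idempotent with $e a_i=a_i e=a_i$ for all $i$. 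Two consequences: first, for any left $A$-module $M$ the conditions $AM=M$ and $k[G_{tor}]M=M$ coincide (if $m=\sum_i a_i m_i$, choose $e$ as above, so $m=em\in k[G_{tor}]M$), which is what makes the identification in the previous paragraph respect the nondegeneracy conventions; second, every $m$ in a nondegenerate module $M$ satisfies $e_m m=m$ for some idempotent $e_m\in A$.

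Finally, I would conclude as for any idempotented ring: for each idempotent $e\in A$, the functor $M\mapsto eM$ on nondegenerate $A$-modules is exact (it is multiplication by an idempotent), and there is a natural isomorphism $\Hom_A(Ae,M)\cong eM$, so $Ae$ is projective; and the maps $Ae_m\to M$, $a\mapsto am$, assemble into a surjection $\bigoplus_{m\in M}Ae_m\twoheadrightarrow M$ with projective source, proving that the category has enough projectives. The only genuinely load-bearing steps are the identification of the two nondegeneracy conditions and the observation that the local units can be taken inside $k[G_{tor}]$; everything else is the standard theory of modules over rings with local units. (If one prefers to avoid skew group rings altogether, an alternative is to use the splitting $G\text{-}k[G_{tor}]\cong\bigoplus_{c\in G_{tor}//G}(Z(g_c)\text{-}k[c])$ from the proof of Lemma \ref{bs=r} together with a further application of Shapiro's lemma to reduce to module categories over honest unital group rings $kH$ -- but the route above seems cleanest.)
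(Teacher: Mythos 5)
Your proposal is correct and follows essentially the same route as the paper: both identify nondegenerate $G$-$k[G_{tor}]$-modules with nondegenerate modules over the crossed product $k[G_{tor}]\rtimes G$ and then exploit the idempotents $\chi_T\in k[G_{tor}]$ to handle the non-unitality. The only (cosmetic) difference is that the paper exhibits every nondegenerate module as a quotient of a free module and proves free modules projective by an explicit lift using $R=\bigoplus_{g\in G_{tor}}R\chi_g$, whereas you package the same idempotent argument as the standard local-units formalism with projective generators $Ae$.
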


\begin{proof}
Let $R=k[G_{tor}]\rtimes G$ be the algebraic crossed product of $k[G_{tor}]$ by $G$: precisely, elements of $R$ are formal sums $\sum_{g\in G} a_g g$ where $a_g\in k[G_{tor}]$, only finitely many $a_g$ are non-zero, and multiplication is determined by the multiplication rules in $k[G_{tor}]$ and $G$ together with the relation $g(a)g^{-1}=\alpha_g(a)$ for $a\in k[G_{tor}]$ and $\alpha:G\to \text{Aut}(k[G_{tor}])$ the action.  Then the category of nondegenerate $G$-$k[G_{tor}]$ modules identifies with the category of $R$-modules, so it suffices to show that the latter has enough projectives.  As any nondegenerate $R$-module is the quotient of a free $R$-module, it suffices to show that free $R$-modules are projective.  

Let then $F=\bigoplus_{i\in I} R$ be a free module, let $\pi:M\to N$ be a surjection, and let $\phi:F\to N$ be any map, so we want to fill in the diagram below so as to make it commute
$$
\xymatrix{ & M \ar[d]^-{\pi} \\ F \ar[r]^-\phi \ar@{-->}[ur] & N }.
$$
Thanks to the universal property of the direct sum, we may assume $I$ consists of a singleton, so $F=R$.  For each $g\in G_{tor}$, let $\chi_g\in R$ denote the characteristic function of $g$.  Then by nondegenerary, $M=\bigoplus_{g\in G_{tor}}\phi(\chi_g) M$.  Moreover, $R=\bigoplus_{g\in G_{tor}} R\chi_g$ (as a left $R$-module).  For each $g\in G_{tor}$ choose a lift $m_g\in M$ of $\phi(\chi_g)$.  Define 
$$
\widetilde{\phi}:R\to M,\quad r\mapsto \sum_{g\in G_{tor}} r\chi_ga_g
$$
(the sum is finite as $r\chi_g\neq 0$ for only finitely many $g$).  This then is a lift as required.
\end{proof}

\begin{lemma}\label{adj fun}
Let $S$ be a $G$-set such that for each $s\in S$, the stabilizer $G_s$ is finite.   Let $k$ be a commutative unital ring.  Let $\bigoplus_{s\in S}k[G_{s}]$ be made into a $G$-$k[G_{tor}]$ module by the $G$-action from line \eqref{g act}, and the pointwise multiplication action of $k[G_{tor}]$.  Let $\chi_{G_{s}}$ be the characteristic function of $G_s$.  Then for any $G$-$k[G_{tor}]$ module $M$ there is a canonical isomorphism
$$
\text{Hom}_{G\text{-}k[G_{tor}]}\Bigg(\bigoplus_{s\in S}k[G_{s}],M\Bigg)\cong \Bigg(\prod_{s\in S} \chi_{G_s}M\Bigg)^G. 
$$
If moreover $S/G$ is finite, there is a canonical isomorphism
$$
\Bigg(\prod_{s\in S} \chi_{G_s}M\Bigg)^G\cong  \Bigg(\bigoplus_{s\in S}\chi_{G_{s}}M\Bigg)_G
$$
where ``$-_G$'' means taking coinvariants for the $G$-action on $\bigoplus_{s\in S}\chi_{G_{s}}M$ defined via the formula in line \eqref{g act}.
\end{lemma}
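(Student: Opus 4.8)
The plan is to deduce both identifications from two classical facts about a finite group $H$: the adjunction $\text{Ind}_H^G\dashv\text{Res}^G_H$ (for the first isomorphism) and the norm isomorphism between coinvariants and invariants (for the second). The organising observation, which is the module-level analogue of line \eqref{is ind}, is that after fixing orbit representatives $\{s_i\}_{i\in I}$ for $S/G$ and setting $H_i:=G_{s_i}$ (finite), the formula in line \eqref{g act} identifies $\bigoplus_{s\in Gs_i}k[G_s]$ with $\text{Ind}_{H_i}^G k[H_i]$ and, for any $G$-$k[G_{tor}]$-module $M$, identifies $\bigoplus_{s\in Gs_i}\chi_{G_s}M$ with $\text{Ind}_{H_i}^G(\chi_{H_i}M)$ and $\prod_{s\in Gs_i}\chi_{G_s}M$ with $\text{Coind}_{H_i}^G(\chi_{H_i}M)$, all for the ``diagonal'' $G$-action $(g\cdot m)_s=\mu_g(m_{g^{-1}s})$ into which the formula in line \eqref{g act} specialises on these modules. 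Verifying these identifications is the bookkeeping that conjugation by $g$ carries $k[G_{g^{-1}s}]$ onto $k[G_s]$ and $\chi_{G_{g^{-1}s}}M$ onto $\chi_{G_s}M$ (the latter using $\mu_g(\chi_{G_{g^{-1}s}}m)=\rho_g(\chi_{G_{g^{-1}s}})\mu_g(m)=\chi_{G_s}\mu_g(m)$), together with the standard ``sections'' description of induced and coinduced modules over $G/H_i$.

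For the first isomorphism I would give a direct formula, with no finiteness hypothesis. For each $s$ let $e_s$ be the element of $\bigoplus_{t}k[G_t]$ whose $s$-component is the unit $\chi_{G_s}$ of $k[G_s]$ and whose other components vanish; then $\chi_{G_s}\cdot e_s=e_s$, each summand $k[G_s]$ is generated over $k[G_{tor}]$ by $e_s$ (since $k[G_s]\cong\chi_{G_s}k[G_{tor}]$), and the formula in line \eqref{g act} gives $g\cdot e_s=e_{gs}$. Hence $\Phi\colon\phi\mapsto(\phi(e_s))_{s\in S}$ is a well-defined monomorphism from $\text{Hom}_{G\text{-}k[G_{tor}]}(\bigoplus_s k[G_s],M)$ into $\prod_s\chi_{G_s}M$ whose image lies in the $G$-invariants (apply $\phi$ to $g\cdot e_{g^{-1}s}=e_s$). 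Conversely, a $G$-invariant family $(m_s)_s$ with $m_s\in\chi_{G_s}M$ defines $\phi$ on the $s$-summand by $f\mapsto f\cdot m_s$, viewing $f\in k[G_s]$ (extended by $0$) inside $k[G_{tor}]$; pointwise-ness of the $k[G_{tor}]$-action makes this $k[G_{tor}]$-linear, and $G$-equivariance is exactly the semilinearity $\mu_g(f\cdot m_s)=\rho_g(f)\cdot\mu_g(m_s)$ combined with $\mu_g(m_s)=m_{gs}$ (the invariance of $(m_s)_s$). These two assignments are mutually inverse, and the whole construction is an explicit incarnation of the adjunction $\text{Ind}\dashv\text{Res}$ applied orbit by orbit, whence the name.

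For the second isomorphism I would combine the orbit identifications of the first paragraph with the classical computations $(\text{Ind}_H^G N)_G\cong N_H$ and $(\text{Coind}_H^G N)^G\cong N^H$, obtaining $(\bigoplus_s\chi_{G_s}M)_G\cong\bigoplus_{i\in I}(\chi_{H_i}M)_{H_i}$ and $(\prod_s\chi_{G_s}M)^G\cong\prod_{i\in I}(\chi_{H_i}M)^{H_i}$. As $S/G=I$ is finite the product on the right is a finite direct sum, so the claim reduces to identifying, for each finite group $H$, the coinvariants $(\chi_HM)_H$ with the invariants $(\chi_HM)^H$. This is where one uses that the order of every stabilizer is invertible in $k$ (part of the running assumptions on $k$ in this section, as in Proposition \ref{bcr and gh}, and not removable): given $|H|^{-1}\in k$, the element $e:=|H|^{-1}\sum_{h\in H}h$ is a central idempotent of $kH$, so $\chi_HM$ splits as $(\chi_HM)^H$ plus its augmentation submodule, the latter being precisely the kernel of the quotient map $\chi_HM\to(\chi_HM)_H$; equivalently the norm map $m\mapsto\sum_{h\in H}h\cdot m$ induces an isomorphism $(\chi_HM)_H\xrightarrow{\ \sim\ }(\chi_HM)^H$. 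Reassembled over all orbits, the resulting isomorphism is induced by $(m_s)_s\mapsto\bigl(\sum_{g\in G}\mu_g(m_{g^{-1}t})\bigr)_t$, a term-wise finite sum precisely because the stabilizers are finite; this form makes naturality in $M$ transparent.

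The bulk of the work, and the main obstacle, is the second isomorphism. The non-unitality of $k[G_{tor}]$ forces care that the idempotents $\chi_{G_s}$ localize $M$ correctly and that the infinite products and sums really do collapse to finite ones over orbit representatives; and, unlike the first isomorphism, the passage from coinvariants to invariants is not formal — it rests on invertibility of stabilizer orders in $k$, without which the statement fails. By contrast the first isomorphism is a straightforward unwinding of definitions once the generators $e_s$ and the rule $g\cdot e_s=e_{gs}$ are identified, and the orbit-wise identifications with (co)induced modules, while needing patience with the conjugation appearing in line \eqref{g act}, are routine.
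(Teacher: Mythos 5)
Your treatment of the first isomorphism is essentially the paper's own argument: evaluation at the generators $\chi_{G_s}$ (your $e_s$) in one direction, and $(m_s)_s\mapsto\bigl((a_s)_s\mapsto\sum_s a_s m_s\bigr)$ in the other, so there is nothing to add there beyond your (correct) gloss that it is an instance of induction--restriction adjunction.

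For the second isomorphism you genuinely diverge from the paper, and the comparison is instructive. The paper argues directly: it defines the transfer map $(a_s)_s\mapsto\bigl(\sum_{\{g,t\,\mid\,gt=s\}}\alpha_g(a_t)\bigr)_s$ from the coinvariants to the invariants and claims that the map sending an invariant family to the tuple supported on a set of orbit representatives is an inverse, with \emph{no} invertibility hypothesis on $k$ (Lemma \ref{adj fun} assumes only that $k$ is commutative and unital; the hypothesis $|G_s|^{-1}\in k$ appears in Lemma \ref{proj 1} and Lemma \ref{hom cor}, not here, so your assertion that it is among the "running assumptions" of this lemma is not accurate). Your route -- orbit-wise identification with $\mathrm{Ind}_{H}^{G}$ and $\mathrm{Coind}_{H}^{G}$ followed by the norm isomorphism $(\chi_H M)_H\cong(\chi_H M)^H$ -- therefore proves, strictly speaking, a weaker statement than the one asked. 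However, your parenthetical claim that the invertibility is not removable is correct, and the discrepancy lies with the paper rather than with you: both composites of the paper's two maps are multiplication by $|G_t|$ on the piece indexed by the orbit of $t$ (exactly the norm-map phenomenon you isolate), so they are mutually inverse only after inverting stabilizer orders; and the stated isomorphism fails in general, e.g.\ for $G=\Z/2=\{e,t\}$, $S$ a single point with $G_s=G$, $k=\Z$, and $M$ the $G$-$k[G_{tor}]$ module with $\chi_eM=0$, $\chi_tM=\Z$ and $t$ acting by $-1$, where the invariants are $0$ but the coinvariants are $\Z/2$. Under the invertibility hypothesis your argument is complete and correct (and that hypothesis is in force in the only place the second isomorphism is used, Lemma \ref{hom cor}); the lemma's statement and proof should be adjusted accordingly, either by adding the hypothesis or by asserting only that the transfer map becomes an isomorphism when the orders $|G_s|$ act invertibly on $M$.
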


\begin{proof}
We first construct a homomorphism
\begin{equation}\label{iso1}
\Bigg(\prod_{s\in S} \chi_{G_s}M\Bigg)^G\to \text{Hom}_{G\text{-}k[G_{tor}]}\Bigg(\bigoplus_{s\in S}k[G_{s}],M\Bigg).
\end{equation}
by sending an element $(m_s)_{s\in S}$ of the left hand side to a homomorphism on the right hand side via the formula $(a_s)_{s\in S}\mapsto \sum_s a_sm_s$.  This homomorphism is actually an isomorphism: given $\phi$ on the right hand side, the inverse is given by sending $\phi$ to $(\phi(\chi_{G_s}))_{s\in S}$.  

On the other hand if $\alpha$ denotes the $G$-action on $\bigoplus_{s\in S}\chi_{G_{s}}M$, then one can define a map
$$
\bigoplus_{s\in S}\chi_{G_{s}}M\to \Bigg(\prod_{s\in S} \chi_{G_s}M\Bigg)^G,\quad (a_s)_{s\in S}\mapsto \Bigg(\sum_{\{g\in G,t\in S\mid gt=s\}}\alpha_g(a_t)\Bigg)_{s\in S}.
$$
One checks that this descends to well-defined homomorphism 
\begin{equation}\label{iso2}
\Bigg(\bigoplus_{s\in S}\chi_{G_{s}}M\Bigg)_G\to \Bigg(\prod_{s\in S} \chi_{G_s}M\Bigg)^G.
\end{equation}
Moreover this is an isomorphism: the inverse is given by choosing a (finite!) set of orbit representatives $T\subseteq S$, and mapping $(m_s)_{s\in S}$ to the element of $\bigoplus_{s\in S}\chi_{G_{s}}M$ whose entries are $m_t$ for $t\in T$, and zero otherwise, and then passing through the quotient map to the coinvariants.  Combining the isomorphisms in lines \eqref{iso1} and \eqref{iso2}, we are done.
\end{proof}

The next lemma is the first place (of two) in the proof of Proposition \ref{bcr and gh} where the assumption that $k$ contains inverses of the orders of (appropriate) torsion elements of $G$ is used.  

\begin{lemma}\label{proj 1}
Let $S$ be a $G$-set such that for each $s\in S$, the stabilizer $G_s$ is finite.   Let $k$ be a commutative unital ring that contains $|G_s|^{-1}$ for each $s$.  Let $\bigoplus_{s\in S}k[G_{s}]$ be made into a $G$-$k[G_{tor}]$ module by the $G$-action from line \eqref{g act}, and the pointwise multiplication action of $k[G_{tor}]$.  Then $\bigoplus_{s\in S}k[G_{s}]$ is projective in the category of $G$-$k[G_{tor}]$ modules.
\end{lemma}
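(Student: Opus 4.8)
The plan is to realize $\bigoplus_{s\in S}k[G_s]$ as a direct summand of a free module over the crossed product $R:=k[G_{tor}]\rtimes G$ from the proof of Lemma~\ref{ep}. Recall from there that the category of $G$-$k[G_{tor}]$ modules is identified with the category of nondegenerate $R$-modules, and that free $R$-modules --- in particular $R$ itself, the rank-one free module --- are projective.

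First I would reduce to a single $G$-orbit. Writing $S$ as the disjoint union of its $G$-orbits, the module $\bigoplus_{s\in S}k[G_s]$ decomposes as a direct sum of $G$-$k[G_{tor}]$ submodules, one for each orbit, since both the $G$-action of line~\eqref{g act} and the pointwise $k[G_{tor}]$-action respect the partition of $S$ into orbits. A direct sum of projectives is projective, so it suffices to handle the case $S=G/H$, where $H=G_{s_0}$ is a fixed point-stabilizer --- a finite subgroup of $G$ with $|H|^{-1}\in k$ by hypothesis. The module in question is then $M:=\bigoplus_{gH\in G/H}k[gHg^{-1}]$, with $G$ acting by~\eqref{g act} and $k[G_{tor}]$ acting by pointwise multiplication on each summand.

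The heart of the argument is to produce an idempotent $q\in R$ with $Rq\cong M$. Since $H$ is finite it is a subset of $G_{tor}$, so the characteristic function $\chi_H$ is a well-defined element of $k[G_{tor}]$ with $\chi_H^2=\chi_H$; using $|H|^{-1}\in k$, set $q:=|H|^{-1}\sum_{h\in H}\chi_H h\in R$. A short computation with the crossed-product relations --- in particular $h\chi_H h^{-1}=\chi_{hHh^{-1}}=\chi_H$ --- shows $q^2=q$. Hence right multiplication by $q$ is an idempotent endomorphism of the left $R$-module $R$, so $Rq$ is a direct summand of $R$, and therefore a (nondegenerate) projective $R$-module. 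This is the only place where the hypothesis on $|G_s|^{-1}$ is used.

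It remains to identify $Rq$ with $M$. Writing a general element of $R$ as a finitely supported sum $\sum_{g\in G}a_g g$ with $a_g\in k[G_{tor}]$, a direct computation of $\big(\sum_g a_g g\big)q$ using the relations above shows that $Rq=\{\sum_{g} a_g g\in R : a_g\in k[gHg^{-1}],\ a_{gh}=a_g \text{ for all } h\in H\}$; restricting such an element to a set of coset representatives for $G/H$ identifies $Rq$ with $\bigoplus_{gH}k[gHg^{-1}]=M$ as $k$-modules. I would then verify that this identification is equivariant: the left $G$-action on $Rq$ sends the coefficient at $g$ to the coefficient at $g_0g$ and conjugates it by $g_0$, which is exactly~\eqref{g act}, while $k[G_{tor}]$ acts by pointwise multiplication on coefficients. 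Recombining the orbit summands then finishes the proof. I expect the main obstacle to be precisely this last verification: the ring $R$ is non-unital, so one must be careful matching left multiplication in $R$ with the twisted action~\eqref{g act} and checking that the coset-constancy condition $a_{gh}=a_g$ is exactly the condition cutting out $Rq$ inside $R$; everything else is formal.
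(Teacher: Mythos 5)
Your argument is correct, but it runs along a genuinely different track from the paper's. The paper proves Lemma \ref{proj 1} functorially: it applies Lemma \ref{adj fun} to identify $\text{Hom}_{G\text{-}k[G_{tor}]}(\bigoplus_{s\in S}k[G_s],M)$ with $\prod_{s\in T}(\chi_{G_s}M)^{G_s}$ over a set $T$ of orbit representatives, and then checks that each invariants functor $N\mapsto N^{G_s}$ is exact because the averaging element $\frac{1}{|G_s|}\sum_{g\in G_s}g$ is an idempotent projecting onto $N^{G_s}$; projectivity follows since the Hom functor is exact. You instead decompose into orbits, pass to the crossed product $R=k[G_{tor}]\rtimes G$ of Lemma \ref{ep}, and cut the orbit module out of the rank-one free module as $Rq$ for the idempotent $q=|H|^{-1}\sum_{h\in H}\chi_H h$, so that projectivity follows from Lemma \ref{ep} (free $R$-modules are projective) plus stability of projectivity and nondegeneracy under direct summands. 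Both proofs hinge on the same averaging trick made possible by $|G_s|^{-1}\in k$, so the use of the hypothesis is identical; the difference is where the idempotent lives. Your route is more structural -- it exhibits $\bigoplus_{s\in S}k[G_s]$ explicitly as a direct summand of a free module, orbit by orbit -- at the cost of the coset bookkeeping in identifying $Rq$ (your description $Rq=\{\sum_g a_g g : a_g\in k[gHg^{-1}],\ a_{gh}=a_g\}$ is correct) and of the non-unitality issue you flag: since $k[G_{tor}]$ need not be unital, $g_0$ itself is not an element of $R$, so the $G$-action on $Rq$ must be expressed via local units $\chi_F g_0$ before the equivariance check against line \eqref{g act}; this works because $k[G_{tor}]$ has local units, but it does need to be said. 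The paper's route avoids all of this and, moreover, recycles Lemma \ref{adj fun}, which is needed again in the proof of Lemma \ref{hom cor}, so it is the more economical choice in context; yours has the merit of making the projective structure completely explicit.
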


\begin{proof}
Lemma \ref{adj fun} implies that for any $G$-$k[G_{tor}]$ module 
$$
\text{Hom}_{G\text{-}k[G_{tor}]}\Bigg(\bigoplus_{s\in S}k[G_{s}],M\Bigg)\cong \Bigg(\prod_{s\in S}\chi_{G_{s}}M\Bigg)^G.
$$
Let $T\subseteq S$ be a set of $G$-orbit representatives.  Then one checks that there is an isomorphism
$$
\Bigg(\prod_{s\in S}\chi_{G_{s}}M\Bigg)^G\cong \prod_{s\in T}(\chi_{G_{s}}M)^{G_s}, \quad (a_s)_{s\in S}\mapsto (a_t)_{t\in T}.
$$
An object $P$ in an abelian category is projective if and only if the functor $N\mapsto \text{Hom}(P,N)$ is exact (see for example \cite[Lemma 2.2.3]{Weibel:1995ty}), whence it suffices to show that the functor $M\mapsto \prod_{s\in T}(\chi_{G_{s}}M)^{G_s}$ from $G$-$k[G_{tor}]$ modules to abelian groups is exact; for that, it will suffice to show that the functor $N\mapsto N^{G_s}$ from $G_s$-$k$ modules to abelian groups is exact.  For this, note that if $p:=\frac{1}{|G_s|}\sum_{g\in G_s}g\in \text{End}(N)$, then $p$ is an idempotent and $pN=N^{G_s}$; exactness follows.
\end{proof}

We are now ready to define the double complex we will use to compute $H_{G,k,c}^*(|Y|,X)$.  We will need the complex below in cases where $Y$ is not $G$-finite, so state it in that level of generality.

\begin{definition}\label{dc1}
Let $k$ be a commutative unital ring.  Let $Z$ be a proper $G$-oriented, type preserving $G$-simplicial complex.  Let $X$ be a locally compact Hausdorff $G$-space with finite $c$-$k$-cohomological dimension, and note that this implies that the space $\widehat{X}$ of Definition \ref{z hat} has finite $c$-$k$-cohomological dimension too, so there exists a resolution 
\begin{equation}\label{x res}
0\to k_{\widehat{X}}\to I^0\to \cdots \to I^m \to 0
\end{equation}
of $c$-soft sheaves as in Lemma \ref{rgcx}\footnote{We do not assume that $k$ is a Pr\"{u}fer domain, so may ignore the condition in part \eqref{rgcx 2} of Lemma \ref{rgcx}.}.  

Let $\phi_X:\widehat{X}\to G_{tor}$ be as in Definition \ref{z hat}, for each simplex $\sigma\in Y_i$ let $G_\sigma$ be the stabilizer, and define $X_{\sigma}:=\phi_X^{-1}(G_\sigma)$.  Define moreover 
$$
C^{p,q}:=\left\{\begin{array}{ll} \oplus_{\sigma\in Z_{-p}}\Gamma_c(X_\sigma,I^q) & p\leq 0,~0\leq q\leq m \\ 0 & \text{otherwise} \end{array}\right.
$$
We may form the modules $(C^{p,q})$ into a double cochain complex via the following differentials.

Horizontal differentials.  For each $p\geq 0$, each simplex $\sigma\in Z_p$ and each $j\in \{0,...,p\}$, let $\sigma^{(j)}\in Z_{p-1}$ denote the $j^\text{th}$ face of $\sigma$, i.e.\ the simplex that has the same vertices as $\sigma$ except the $j^\text{th}$ face is removed.  Note that the type-preserving assumption implies that $G_\sigma\subseteq G_{\sigma^{(j)}}$ for each $j$, whence for each such $j$ and each $q\in \{0,...,m\}$ there is a canonical inclusion $\Gamma_c(X_\sigma,I^q)\subseteq \Gamma_c(X_{\sigma^{(j)}},I^q)$.  Define 
$$
\partial_j:\underbrace{\bigoplus_{\sigma\in Z_p}\Gamma_c(X_\sigma,I^q)}_{=C^{-p,q}}\to \underbrace{\bigoplus_{\sigma\in Z_{p-1}}\Gamma_c(X_\sigma,I^q)}_{=C^{-p+1,q}}
$$
by sending an element $a$ supported in $\Gamma_c(X_\sigma,I^q)$ to its image under the corresponding inclusion $\Gamma_c(X_\sigma,I^q)\subseteq \Gamma_c(X_\sigma^{(j)},I^q)$ and extending by linearity.  The horizontal differential is then
$$
\partial^{(h)}:C^{p,q}\to C^{p+1,q}\quad \partial:=(-1)^q\sum_{j=0}^i (-1)^j\partial_j
$$
(the ``$(-1)^q$'' is to ensure that $\partial^{(h)}$ anticommutes with the vertical differential $\partial^{(v)}$ defined below).

Vertical differentials.  The vertical differentials $\partial^{(v)}:C^{p,q}\to C^{p,q+1}$ are those functorially induced by the differentials from the resolution in line \eqref{x res} above on each summand $\Gamma_c(X_\sigma,I^q)$.
\end{definition}

\begin{lemma}\label{hom cor}
Let $Y$ be a proper, $G$-finite, type-preserving, $G$-oriented $G$-simplicial complex of dimension $d$.   Let $X$ be a locally compact, Hausdorff $G$-space with finite $c$-$k$-cohomological dimension.  Let $k$ be a commutative unital ring such that $n^{-1}$ exists in $k$ whenever $n$ is the order of a torsion element of $G$ that fixes a point $x\in X$.  Let 
\begin{equation}\label{y x dc}
\xymatrix{ & 0 & & 0 & \\ 0 \ar[r] & C^{-d,m} \ar[u] \ar[r] & \cdots  \ar[r] & C^{0,m} \ar[u] \ar[r] & 0  \\
& \vdots \ar[u] & & \vdots \ar[u] &  \\
0 \ar[r] & C^{-d,0} \ar[r] \ar[u] & \cdots  \ar[r] & C^{0,0} \ar[u] \ar[r] & 0 \\
& 0 \ar[u] & & 0 \ar[u] &}
\end{equation}
denote the double cochain complex from Definition \ref{dc1} above.  

Then $H_{G,k,c}^*(|Y|,X)$ is canonically isomorphic to the cohomology of the (direct sum) total complex one gets by starting with the double complex in line \eqref{y x dc} above and taking coinvariants in each entry, i.e.\ to the cohomology of the cochain complex $(C^n,\partial)_{n=-d}^m$ with 
$$
C^n:=\bigoplus_{p+q=n} (C^{p,q})_G
$$
and $\partial$ the differential induced on coinvariants by $\partial^{(h)}+\partial^{(v)}$.  
\end{lemma}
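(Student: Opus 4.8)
The plan is to unwind the definition of $H^*_{G,k,c}(|Y|,X)$ as the hyperext group $\text{Ext}^*_{G\text{-}k[G_{tor}]}(R\Gamma_ck_{\widehat{Y}},R\Gamma_ck_{\widehat{X}})$ (Definitions \ref{hext} and \ref{bs hom}, with the usual abuse $\widehat{Y}=\widehat{|Y|}$) and to feed in explicit models for the two arguments. Lemma \ref{g-fin} represents $R\Gamma_ck_{\widehat{Y}}$ in $\dcc(G\text{-}k[G_{tor}])$ by the basic complex $M_\bullet$ of Definition \ref{simp der def}, with $M_i=\bigoplus_{\sigma\in Y_i}k[G_\sigma]$ and concentrated in degrees $0,\dots,d$. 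Since $X$ has finite $c$-$k$-cohomological dimension, so does $\widehat{X}$, so a finite-length resolution $0\to k_{\widehat{X}}\to I^0\to\dots\to I^m\to0$ by $c$-soft $G$-sheaves exists (Lemma \ref{rgcx}), and by Lemma \ref{g c soft}(iii) the complex $\Gamma_c(\widehat{X},I^\bullet)$ represents $R\Gamma_ck_{\widehat{X}}$. Hence $H^n_{G,k,c}(|Y|,X)=\text{Hom}_{\dcc(G\text{-}k[G_{tor}])}(M_\bullet,\Gamma_c(\widehat{X},I^\bullet)[-n])$.

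The first and main step is to show that this hyperext is computed by the (direct sum) total complex of the bicomplex $\text{Hom}_{G\text{-}k[G_{tor}]}(M_i,\Gamma_c(\widehat{X},I^q))$. Computing the hyperext via a Cartan--Eilenberg projective resolution of $M_\bullet$ (which exists because $G\text{-}k[G_{tor}]$ has enough projectives, Lemma \ref{ep}) and comparing one $I^q$-column at a time, this reduces to the vanishing $\text{Ext}^s_{G\text{-}k[G_{tor}]}(M_i,\Gamma_c(\widehat{X},I^q))=0$ for all $s>0$ and all $i,q$. To obtain this I would use the splitting of the category $G\text{-}k[G_{tor}]$ along conjugacy classes of torsion elements from the proof of Lemma \ref{bs=r}: it writes the Ext-group as a product over $c\in G_{tor}//G$ of groups $\text{Ext}^*_{Z(g_c)\text{-}k}(\chi_{g_c}M_i,\chi_{g_c}\Gamma_c(\widehat{X},I^q))$, the factor at $c$ being zero unless $X^{g_c}\neq\varnothing$. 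For $c$ with $X^{g_c}\neq\varnothing$, $\chi_{g_c}M_i$ is the permutation $Z(g_c)$-module on the set of $g_c$-fixed $i$-simplices of $Y$; splitting it into $Z(g_c)$-orbits and applying Shapiro's lemma, the claim reduces to the vanishing of $H^s(Z_{G_\tau}(g_c);\Gamma_c(X^{g_c},I^q))$ for $s>0$, where $\tau$ ranges over $g_c$-fixed $i$-simplices and $Z_{G_\tau}(g_c)=Z(g_c)\cap G_\tau$ is a finite group acting on $X^{g_c}$. This is where the hypothesis on $k$ enters: by Cauchy's theorem it forces every point stabilizer of $Z_{G_\tau}(g_c)$ on $X^{g_c}$ to have order invertible in $k$ (a prime dividing such a stabilizer produces an element of that prime order fixing a point of $X$), and since $I^q$ is $c$-soft this makes $\Gamma_c(X^{g_c},I^q)$ acyclic for the cohomology of the finite group $Z_{G_\tau}(g_c)$. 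I expect this final point --- that compactly supported sections of a $c$-soft $G$-sheaf over a proper $G$-space all of whose point stabilizers have order invertible in $k$ carry no higher cohomology for the acting finite group --- to be the main obstacle; it should follow by presenting such a module as a filtered colimit of modules induced from invertible-order subgroups, in the spirit of \cite[Proposition 2]{Schneider:1998aa} and Lemma \ref{proj 1}.

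Granting the vanishing, it remains to identify the Hom-bicomplex with the coinvariant bicomplex of the statement. By Lemma \ref{adj fun} applied to $M_i=\bigoplus_{\sigma\in Y_i}k[G_\sigma]$, and using $\chi_{G_\sigma}\Gamma_c(\widehat{X},I^q)=\Gamma_c(\phi_X^{-1}(G_\sigma),I^q)=\Gamma_c(X_\sigma,I^q)$, the Hom-group is canonically $(\prod_{\sigma\in Y_i}\Gamma_c(X_\sigma,I^q))^G$; since $Y$ is $G$-finite, $Y_i/G$ is finite, so the second isomorphism of Lemma \ref{adj fun} identifies this with $(\bigoplus_{\sigma\in Y_i}\Gamma_c(X_\sigma,I^q))_G=(C^{-i,q})_G$. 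A routine check of signs then shows that under these identifications the two differentials of the Hom-bicomplex correspond to $\partial^{(h)}$ and $\partial^{(v)}$ of Definition \ref{dc1}: the differential of $M_\bullet$ is a signed sum of face-inclusions $k[G_\sigma]\to k[G_\eta]$ whose transposes are exactly the signed sum of extension-by-zero maps $\Gamma_c(X_\eta,I^q)\to\Gamma_c(X_{\eta^{(j)}},I^q)$ appearing in $\partial^{(h)}$, while the resolution differential of $I^\bullet$ functorially induces $\partial^{(v)}$. Hence $\operatorname{Tot}\text{Hom}_{G\text{-}k[G_{tor}]}(M_\bullet,\Gamma_c(\widehat{X},I^\bullet))$ is precisely the total complex $(C^n=\bigoplus_{p+q=n}(C^{p,q})_G,\ \partial^{(h)}+\partial^{(v)})$ of the statement, and combining with the first step we conclude that $H^*_{G,k,c}(|Y|,X)$ is canonically its cohomology.
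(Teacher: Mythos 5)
Your first and last steps coincide with the paper's proof (the model for $R\Gamma_ck_{\widehat{Y}}$ from Lemma \ref{g-fin}, and the identification of the Hom bicomplex with the coinvariant bicomplex via Lemma \ref{adj fun}, including the matching of differentials), but the middle of your argument is genuinely different from the paper's and is exactly where it is incomplete. The paper never needs any acyclicity of the coefficients $\Gamma_c(\widehat{X},I^q)$: it splits $G\text{-}k[G_{tor}]$ along $S_X=\{g\in G_{tor}\mid gx=x \text{ for some }x\in X\}$ (this plays the role of your observation that classes $c$ with $X^{g_c}=\varnothing$ contribute nothing), replaces the basic complex by $\bigoplus_\sigma k[G_\sigma\cap S_X]$, and argues via Lemma \ref{proj 1} that the first variable is then a bounded complex of projectives, so that \cite[Theorem 10.7.4]{Weibel:1995ty} computes the hyperext directly as the cohomology of the Hom bicomplex. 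You instead keep the first variable as it is and reduce, via the splitting of Lemma \ref{bs=r} and Shapiro's lemma, to the vanishing $H^s(Z(g_c)\cap G_\tau,\Gamma_c(X^{g_c},I^q))=0$ for $s>0$ --- and this pivotal claim is left unproven ("I expect this final point \dots to be the main obstacle"), so as written the proposal has a genuine gap at its central step.

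Moreover, the repair you sketch is not adequate as stated. $\Gamma_c(X^{g_c},I^q)$ is not in any evident way a filtered colimit of modules induced from subgroups of invertible order; what $c$-softness actually gives is a partition of a compactly supported section subordinate to a finite cover by tube neighbourhoods $H\cdot V$ (with $V$ an $H_x$-invariant slice), i.e.\ a presentation of the module as a sum of induced pieces, or a finite left (Mayer--Vietoris/\v{C}ech-type) resolution by direct sums of induced modules. But a left resolution by modules that are acyclic in positive degrees does \emph{not} imply vanishing of higher group cohomology --- dimension shifting runs the wrong way (e.g.\ $\Z$ has a free $\Z[\Z/2]$-resolution, yet $H^2(\Z/2,\Z)\neq 0$). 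A correct argument along your lines can be assembled, but it needs more: existence of tube neighbourhoods for the finite group action, the exact Mayer--Vietoris sequence $0\to\Gamma_c(A\cap B)\to\Gamma_c(A)\oplus\Gamma_c(B)\to\Gamma_c(A\cup B)\to 0$ for $c$-soft sheaves, the fact that modules induced from subgroups of order invertible in $k$ are acyclic in \emph{all} Tate degrees (so that acyclicity propagates both ways along the resulting finite exact sequences), and finally commutation of $H^*(H,-)$ with the filtered colimit over relatively compact invariant opens. Your Cauchy-theorem observation that the relevant point stabilizers have order invertible in $k$ is correct, and with the missing acyclicity supplied your route would give the lemma; but as it stands the key step is a conjecture with an insufficient hint, whereas the paper's $S_X$-splitting-plus-projectivity device (Lemmas \ref{ep} and \ref{proj 1}) is the mechanism that lets it bypass this issue entirely.
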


\begin{proof}
Using Lemma \ref{g-fin}, $R\Gamma_ck_{\widehat{Y}}$ is isomorphic in $\dcc(G\text{-}k[G_{tor}])$ to the complex from Definition \ref{simp der def}; we identify it with that complex.   Let $S_X:=\{g\in G_{tor}\mid gx=x\text{ for some } x\in X\}$.  Then there is a canonical splitting of the category of $G\text{-}k[G_{tor}]$ modules into a product of the categories of $G$-$k[S_X]$-modules and $G$-$k[G_{tor}\setminus S_X]$ modules; this gives rise to a splitting of $\dcc(G\text{-}k[G_{tor}])$ as a corresponding product of categories.  As $R\Gamma_ck_{\widehat{X}}$ lives in $\dcc(G\text{-}k[S_X])$, we may work inside this category, and replace $R\Gamma_ck_{\widehat{Y}}$ with its image in $\dcc(G\text{-}k[S_X])$; in other words, the module 
$$
\bigoplus_{\sigma\in Y_i}k[G_{\sigma}]
$$
from Definition \ref{simp der def} gets replaced by 
$$
\bigoplus_{\sigma\in Y_i}k[G_{\sigma}\cap S_X].
$$
Having made this adjustment, all the modules appearing in $R\Gamma_ck_{\widehat{Y}}$ are projective by our assumption on $k$ and Lemma \ref{proj 1}. 

Thanks to Lemma \ref{ep}, the category $G$-$k[G_{tor}]$ has enough projectives.  Hence from \cite[Theorem 10.7.4]{Weibel:1995ty} and following the conventions in \cite[2.7.4]{Weibel:1995ty} we may compute $H_{G,k,c}^*(|Y|,X)=\text{Ext}^*_{G\text{-}k[S_X]}(R\Gamma_ck_{\widehat{Y}},R\Gamma_ck_{\widehat{X}})$ by following the steps below:
\begin{enumerate}
\item find a cochain complex quasi-isomorphic to $R\Gamma_ck_{\widehat{Y}}$ that consists of projective objects (this has already been done by the adjustment above);
\item convert $R\Gamma_ck_{\widehat{Y}}$ to a chain complex by replacing the $p^{\text{th}}$ module by the $(-p)^\text{th}$;
\item take the Hom double complex $H^{p,q}$, where the $(p,q)^\text{th}$ module is the homomorphisms in $G$-$k[S_X]$ from the $p^\text{th}$ element of the chain complex associated to $R\Gamma_ck_{\widehat{Y}}$ to the $q^\text{th}$ element of the cochain complex $R\Gamma_ck_{\widehat{X}}$;
\item take the (direct product) total complex of the associated double complex, and take (co)homology. 
\end{enumerate}

Carrying out this process above, we see that 
$$
H^{p,q}=\left\{\begin{array}{ll} \text{Hom}_{G\text{-}k[S]} \Bigg( \bigoplus_{\sigma\in Y_{-p}}k[G_{\sigma}\cap S_X],\Gamma_c(\widehat{X},I^q)\Bigg), & -d\leq p\leq 0,~0\leq q\leq m \\ 0, & \text{otherwise} \end{array}\right..
$$
On the other hand, Lemma \ref{adj fun} implies then that for $-d\leq p\leq 0$ and $0\leq q\leq m$
$$
H^{p,q}=\Bigg(\bigoplus_{\sigma\in Y_{-p}}\Gamma_c(X_\sigma,I^q)\Bigg)_G
$$
and is zero otherwise.  Moreover, a direct check shows that the boundary maps that arise from Definition \ref{dc1} by taking coinvariants and those arrived at functorially from the process above agree.  Hence the double complex we arrive at agrees with that from line \eqref{y x dc}.  Finally, we note that as this double complex is of finite extent, the direct product and direct sum total complexes agree, and we have the statement.
\end{proof}

\subsection{Step 4: computation of $H^*_{G,!}(Z,X)$}\label{s4}

Recall from Definition \ref{bs hom} (the original source is \cite[page 316]{Baum:2002aa}) that for a general proper $G$-space $Z$, Baum and Schneider define 
$$
H^*_{G,k,!}(Z,X)=\lim_{\to} H^*_{G,k,c}(Y,X)
$$
where the limit is taken over all $G$-compact and $G$-invariant subspaces $Y$ of $Z$ ordered by inclusion.  In particular, if $Z$ is a proper $G$-simplicial complex, then the limit may be taken over all (geometric realizations of) $G$-finite subcomplexes.  As taking homology commutes with direct limits, we see that if $Z$ is a $G$-simplicial complex satisfying the assumptions of Lemma \ref{hom cor} with the exception of $G$-finiteness, then $H^*_{G,k,!}(|Z|,X)$ can be computed via the complex from Corollary \ref{hom cor}, i.e.\ we have the following result.

\begin{corollary}\label{! cor}
Let $Z$ be a proper, type-preserving, $G$-oriented $G$-simplicial complex.   Let $X$ be a locally compact, Hausdorff $G$-space with finite $c$-$k$-cohomological dimension.  Let $k$ be a commutative unital ring that contains $n^{-1}$ whenever $n$ is the order of a torsion element of $G$ that fixes a point $x\in X$.  Let 
\begin{equation}\label{y x dc 2}
\xymatrix{ \cdots & 0 & 0 & 0 & \\ \cdots  \ar[r] & C^{-2,m} \ar[u] \ar[r] & C^{-1,m}  \ar[r] \ar[u] & C^{0,m} \ar[u] \ar[r] & 0  \\
\cdots & \vdots \ar[u] & \vdots \ar[u] & \vdots \ar[u] &  \\
 \cdots  \ar[r] & C^{-2,0} \ar[u] \ar[r] & C^{-1,0}  \ar[r] \ar[u] & C^{0,0} \ar[u] \ar[r] & 0 \\
\cdots & 0 \ar[u] & 0\ar[u] & 0 \ar[u] &}
\end{equation}
denote the double cochain complex from Definition \ref{dc1} above.  

Then $H_{G,k,!}^*(|Z|,X)$ is canonically isomorphic to the cohomology of the (direct sum) total complex one gets by starting with the double complex in line \eqref{y x dc 2} above and taking coinvariants in each entry, i.e.\ to the cohomology of the cochain complex $(C^n,\partial)_{n=-\infty}^m$ with 
$$
C^n:=\bigoplus_{p+q=n} (C^{p,q})_G
$$
and $\partial$ the differential induced on coinvariants by $\partial^{(h)}+\partial^{(v)}$.  \qed
\end{corollary}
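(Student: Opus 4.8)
The plan is to reduce the statement to the $G$-finite case already handled by Lemma \ref{hom cor}, and then to commute a filtered colimit past the formation of coinvariants, totalization, and cohomology.

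First I would set up the colimit. The $G$-finite subcomplexes of $Z$ form a directed set under inclusion, and --- exactly as in the proof of Corollary \ref{lim cor} --- they are cofinal among the $G$-compact, $G$-invariant subspaces of $|Z|$ used in Definition \ref{bs!}: by the direct-limit topology on $|Z|$ any $G$-compact $G$-invariant subspace meets only finitely many $G$-orbits of open simplices and hence is contained in a $G$-finite subcomplex. Therefore
$$
H^n_{G,k,!}(|Z|,X)=\varinjlim_{Y}H^n_{G,k,c}(|Y|,X),
$$
the colimit running over $G$-finite subcomplexes $Y\subseteq Z$, where the transition map for $Y\subseteq Y'$ is induced by the ``restriction to the closed subspace $\widehat Y\subseteq\widehat{Y'}$'' morphism $R\Gamma_ck_{\widehat{Y'}}\to R\Gamma_ck_{\widehat Y}$ in $\dcc(G\text{-}k[G_{tor}])$.

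Next I would identify these transition maps on the double-complex side. Under the isomorphism of Lemma \ref{g-fin} the restriction morphism $R\Gamma_ck_{\widehat{Y'}}\to R\Gamma_ck_{\widehat Y}$ corresponds to the coordinate projection $\bigoplus_{\sigma\in Y'_i}k[G_\sigma]\to\bigoplus_{\sigma\in Y_i}k[G_\sigma]$ of basic complexes (it kills the summands indexed by simplices of $Y'$ not lying in $Y$); applying $\mathrm{Ext}^*_{G\text{-}k[G_{tor}]}(-,R\Gamma_ck_{\widehat X})$ and unwinding via Lemmas \ref{adj fun} and \ref{hom cor}, the resulting map $C^{p,q}_Y\to C^{p,q}_{Y'}$ is the evident inclusion of the subsum indexed by $Y_{-p}\subseteq Y'_{-p}$. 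Since $Z_{-p}=\bigcup_Y Y_{-p}$ and the differentials of Definition \ref{dc1} are local --- the horizontal maps $\partial_j$ send the summand of a simplex $\sigma$ into summands of its faces $\sigma^{(j)}$, which lie in any subcomplex containing $\sigma$, while the vertical differential $\partial^{(v)}$ leaves the indexing set alone --- the double complex $C^{\bullet,\bullet}_Z$ of Definition \ref{dc1} is precisely the filtered colimit $\varinjlim_Y C^{\bullet,\bullet}_Y$ of these inclusions, compatibly with the isomorphisms of Lemma \ref{hom cor}.

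Finally I would push the colimit through the remaining constructions: taking coinvariants is a left adjoint, so it commutes with $\varinjlim$; in each total degree $n$ only finitely many summands of $\bigoplus_{p+q=n}(C^{p,q})_G$ are nonzero (since $0\le q\le m$), so totalization commutes with $\varinjlim$; and filtered colimits of $k$-modules are exact, so cohomology commutes with $\varinjlim$. Chaining these isomorphisms with Lemma \ref{hom cor} and the colimit formula for $H^n_{G,k,!}$ above yields the statement. The one step that needs genuine care, and that I expect to be the main obstacle, is the naturality claim of the previous paragraph: one must check that the identification of $R\Gamma_ck_{\widehat Y}$ with the basic complex in the proof of Lemma \ref{g-fin} --- built from a Godement resolution together with the skeletal filtration of $\widehat Y$ --- is functorial for inclusions of $G$-finite subcomplexes, so that the restriction morphisms $R\Gamma_ck_{\widehat{Y'}}\to R\Gamma_ck_{\widehat Y}$ really do become the coordinate projections of basic complexes. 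This is true but somewhat tedious to spell out, which is presumably why the corollary was stated without detailed proof.
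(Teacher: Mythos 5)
Your proposal is correct and follows essentially the same route as the paper: the paper's (much terser) argument likewise writes $H^*_{G,k,!}(|Z|,X)$ as the direct limit of $H^*_{G,k,c}(|Y|,X)$ over the cofinal family of $G$-finite subcomplexes $Y\subseteq Z$, applies Lemma \ref{hom cor} to each, and uses that homology commutes with direct limits. The extra detail you supply --- identifying the transition maps with the summand inclusions and flagging the naturality of the identification in Lemma \ref{g-fin} --- is exactly what the paper leaves implicit.
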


\subsection{Step 5: computation of $H_*(G,\Gamma_c(I^{-\bullet}))$}\label{s5}

The next lemma is no doubt well-known.  It is the second (and last) place in the proof of Proposition \ref{bcr and gh} where the assumption of $k$ containing inverses of the orders of (appropriate) finite subgroups of $G$ is used.  For the proof, recall that we write $kH$ for the group ring of a group $H$ over $k$.

\begin{lemma}\label{co inv}
Let $H$ be a finite group, let $k$ be a commutative unital ring in which $|H|$ is invertible, and let $V$ be an $H$-$k$-module which is flat as a $k$-module.  Then $V$ is flat as an $H$-$k$-module.
\end{lemma}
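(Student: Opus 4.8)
The plan is to exhibit $V$ as a direct summand, in the category of $kH$-modules, of a module that is visibly flat over $kH$; the splitting will come from the usual averaging element, available because $|H|$ is invertible in $k$. (This is the finite-group, one-lemma version of the statement that $kH$ is a separable $k$-algebra when $|H|^{-1}\in k$.) Recall that since $k$ carries the trivial $H$-action an $H$-$k$-module is the same thing as a $kH$-module, and that $kH\cong kH^{\mathrm{op}}$ via $h\mapsto h^{-1}$, so left/right distinctions are immaterial here.

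First I would consider the $kH$-module $kH\otimes_k V$, where $kH$ acts by left multiplication on the first tensor factor, together with the multiplication map $\mu\colon kH\otimes_k V\to V$, $a\otimes v\mapsto av$, which is a surjection of $kH$-modules. The second step is to note that $kH\otimes_k V$ is flat over $kH$: for every right $kH$-module $M$ there is a natural isomorphism $M\otimes_{kH}(kH\otimes_k V)\cong M\otimes_k V$, and the functor $M\mapsto M\otimes_k V$ is exact precisely because $V$ is flat over $k$; hence $M\mapsto M\otimes_{kH}(kH\otimes_k V)$ is exact.

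The key step is to split $\mu$ in the category of $kH$-modules. I would define $s\colon V\to kH\otimes_k V$ by $s(v):=|H|^{-1}\sum_{h\in H}h\otimes h^{-1}v$; reindexing the sum by $h\mapsto gh$ shows that $s$ is $kH$-linear, and $\mu\circ s=\mathrm{id}_V$ since each of the $|H|$ summands contributes $|H|^{-1}v$. Thus $V$ is a direct summand of the $kH$-flat module $kH\otimes_k V$, and a direct summand of a flat module is flat, so $V$ is flat over $kH$, which is the claim. The only point that needs genuine care — the ``main obstacle'', such as it is — is checking that $s$ really is a morphism of $kH$-modules and not merely of $k$-modules; this is exactly where invertibility of $|H|$ and the compatibility axiom in the definition of an $H$-$k$-module get used, and it is a close cousin of the projection-onto-invariants trick already employed in the proof of Lemma \ref{proj 1}.
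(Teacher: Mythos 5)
Your proof is correct. It turns on the same averaging element $\frac{1}{|H|}\sum_{h\in H}h$ that the paper uses, but the packaging is genuinely different: the paper argues one tensor product at a time, identifying $M\otimes_{kH}V$ with the image of the natural idempotent $p=\frac{1}{|H|}\sum_{h\in H}\mu_{h^{-1}}\otimes\nu_h$ on $M\otimes_k V$, so that the functor $M\mapsto M\otimes_{kH}V$ is exhibited as a natural direct summand of the exact functor $M\mapsto M\otimes_k V$; you instead work at the level of the module itself, splitting the multiplication map $\mu\colon kH\otimes_k V\to V$ by $s(v)=|H|^{-1}\sum_{h\in H}h\otimes h^{-1}v$, and then invoking two standard facts (the tensor identity $M\otimes_{kH}(kH\otimes_k V)\cong M\otimes_k V$, which makes $kH\otimes_k V$ flat over $kH$ when $V$ is $k$-flat, and closure of flatness under retracts). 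The two arguments are formally equivalent --- applying $M\otimes_{kH}(-)$ to your split surjection recovers exactly the paper's idempotent splitting of $M\otimes_k V\to M\otimes_{kH}V$ --- but yours records the slightly stronger structural statement that $V$ is a $kH$-module retract of the induced module $kH\otimes_k V$ (relative projectivity of $kH$ over $k$), which in particular would also yield projectivity of $V$ over $kH$ whenever $V$ is $k$-projective; the paper's version is marginally more self-contained, needing neither the tensor identity nor the retract lemma as separate inputs. Your checks that $s$ is $kH$-linear (via the reindexing $h\mapsto gh$) and that $\mu\circ s=\mathrm{id}_V$ are exactly where $|H|^{-1}\in k$ and the module axiom enter, and both are correct.
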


\begin{proof}
We need to show that the functor $M\mapsto M\otimes_{kH} V$ from $H$-$k$-modules to $k$-modules preserves exact sequences (as $k$ is commutative, we may consider $M\otimes_{kH} V$ as a $k$-module, and we do not need to worry about distinctions between left and right modules).   Let $\mu$ denote the $H$-action on $M$, and let $\nu$ denote the $H$-action on $V$.  Then $M\otimes_{kH}V$ is isomorphic to the quotient of $M\otimes_k V$ by the submodule $N$ generated by elements of the form $\mu_h(m)\otimes v-m\otimes \nu_h(v)$, or equivalently, by those of the form $m\otimes v - \mu_{h^{-1}}(m)\otimes \nu_h(v)$.  Define 
$$
p:=\frac{1}{|H|}\sum_{h\in H} \mu_{h^{-1}}\otimes \nu_h\in \text{End}_k(M\otimes_k V).
$$
Then one checks directly that $p$ is an idempotent, and that $(1-p)(M\otimes_k V)=N$.  Hence there are canonical isomorphisms:
$$
p(M\otimes_kV)\cong (M\otimes_k V)/N\cong M\otimes_{kH}V
$$
Using that $V$ is flat as a $k$-module, the assignment $M\mapsto p(M\otimes_k V)$ is an exact functor, so we are done.
\end{proof}

The next lemma is again no doubt well-known.    For the statement, recall the definition of induced module from Definition \ref{ind def} above.

\begin{lemma}\label{g-mod proj}
Let $k$ be a commutative unital ring.  Let $H$ be a subgroup of a group $G$, and $M$ be an $H$-$k$-module.  If $M$ is free (respectively projective, respectively flat) then so too is $\text{Ind}_H^G(M)$.
\end{lemma}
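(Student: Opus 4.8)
The plan is to exploit the single structural fact that $kG$ is a free module over the subring $kH$. Concretely, fixing a set $\{g_i\}_{i\in I}$ of representatives for the left cosets $G/H$, left multiplication by the $g_i$ gives a decomposition $kG=\bigoplus_{i\in I}g_i\,kH$ of $kG$ as a right $kH$-module, so $kG$ is free as a right $kH$-module. Recall also that $\text{Ind}_H^G(M)=kG\otimes_{kH}M$ carries its $G$-$k$-module structure from left multiplication on the $kG$-factor, so all the statements below are about modules over the group ring $kG$, i.e. about objects of $G$-$k$.

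First I would treat the free case. If $M\cong (kH)^{(I)}$ is a free $H$-$k$-module, then, since tensor product commutes with arbitrary direct sums, $\text{Ind}_H^G(M)\cong (kG\otimes_{kH}kH)^{(I)}\cong (kG)^{(I)}$, which is free as a $G$-$k$-module. The projective case is then formal: if $M$ is projective it is a direct summand of a free $H$-$k$-module, say $M\oplus M'\cong (kH)^{(I)}$, and applying the additive functor $\text{Ind}_H^G$ (which preserves direct sums) yields $\text{Ind}_H^G(M)\oplus\text{Ind}_H^G(M')\cong (kG)^{(I)}$, so $\text{Ind}_H^G(M)$ is a direct summand of a free $G$-$k$-module, hence projective.

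For the flat case I would invoke the standard base-change principle: for any ring homomorphism $R\to S$ and any flat $R$-module $M$, the $S$-module $S\otimes_R M$ is flat. Applied with $R=kH$ and $S=kG$, this gives, for an arbitrary $G$-$k$-module $N$, a natural isomorphism $N\otimes_{kG}\text{Ind}_H^G(M)\cong N\otimes_{kH}M$; since $M$ is flat over $kH$ the right-hand side is exact as a functor of $N$ viewed as a $kH$-module, and exactness of a sequence of $kG$-modules can be checked after restriction along $kH\hookrightarrow kG$. Hence $N\mapsto N\otimes_{kG}\text{Ind}_H^G(M)$ is exact, i.e. $\text{Ind}_H^G(M)$ is flat. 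There is no real obstacle here; the only points needing a little care are keeping the left/right conventions in $kG\otimes_{kH}M$ straight and noting that the forgetful functor $G$-$k\to H$-$k$ is exact and faithful, which makes the exactness check in the last step immediate.
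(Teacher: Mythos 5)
Your argument is correct. The free and projective cases are exactly the paper's argument: induction sends $\bigoplus_I kH$ to $\bigoplus_I kG$, and a summand of a free module goes to a summand of a free module. Where you diverge is the flat case. The paper quotes Lazard's theorem (a flat module is a direct limit of finitely generated free modules) and then uses that $\text{Ind}_H^G=kG\otimes_{kH}(-)$ commutes with direct limits, reducing flatness to the free case already established. You instead prove flatness directly by the base-change principle: for a right $kG$-module $N$ one has the natural isomorphism
$$
N\otimes_{kG}\bigl(kG\otimes_{kH}M\bigr)\;\cong\;N\otimes_{kH}M,
$$
and since restriction of scalars is exact and $M$ is flat over $kH$, the left-hand side is an exact functor of $N$, so $\text{Ind}_H^G(M)$ is flat over $kG$. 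Both routes are complete proofs. Yours is more self-contained (no appeal to Lazard) and in fact more general, since base change preserves flatness along an arbitrary ring homomorphism $R\to S$; note that this means the coset decomposition $kG=\bigoplus_i g_i\,kH$ you advertise at the outset is never actually used in your argument (it would be needed, e.g., to show restriction preserves projectivity or flatness, not induction). The paper's route has the mild advantage of handling all three cases by one mechanism (reduce to the free case), at the cost of invoking Lazard's theorem, which is exactly what the paper does by citing \cite[Th\'{e}or\`{e}me 1.2]{Lazard:1969aa}. Your only loose phrase is that ``exactness of a sequence of $kG$-modules can be checked after restriction along $kH\hookrightarrow kG$''; this is true but slightly roundabout, since exactness is already a statement about the underlying abelian groups, so the faithfulness of the forgetful functor is not really the point — what you need, and what you do use, is only its exactness.
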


\begin{proof}
First let $M\cong \bigoplus_{i\in I} kH$ be free. Then $\text{Ind}_H^G\cong \bigoplus_i kG$ is also free.  If $M$ is projective, then $M\oplus N$ is free for some $H$-$k$-module $N$, whence $\text{Ind}_H^G(M\oplus N)=\text{Ind}_H^G(M)\oplus \text{Ind}_H^G(N)$ is free, so $\text{Ind}_H^G(M)$ is projective.  Finally, if $M$ is flat then $M$ is a direct limit of (finitely generated) free modules by \cite[Th\'{e}or\`{e}me 1.2]{Lazard:1969aa}; as $\text{Ind}_H^G$ commutes with direct limits, it is a direct limit of such modules too by the first part, whence flat.
\end{proof}

We need an analogue of the complex in Definition \ref{dc1}.

\begin{definition}\label{dc2}
Let $k$ be a commutative unital ring.  Let $Z$ be a proper $G$-oriented, type preserving $G$-simplicial complex.  Let $X$ be a locally compact Hausdorff $G$-space with finite $c$-$k$-cohomological dimension, and note that this implies that the space $\widehat{X}$ of Definition \ref{z hat} has finite $c$-$k$-cohomological dimension too, so there exists a resolution 
\begin{equation}\label{x res 2}
0\to k_{\widehat{X}}\to I^0\to \cdots \to I^m \to 0
\end{equation}
of $c$-soft sheaves as in Lemma \ref{rgcx}\footnote{We do not assume that $k$ is a Pr\"{u}fer ring, so may ignore the condition in part \eqref{rgcx 2} of Lemma \ref{rgcx}.}.

Let $\phi_X:\widehat{X}\to G_{tor}$ be as in Definition \ref{z hat} and for each simplex $\sigma\in Y_i$, let $G_\sigma$ be the stabilizer, and define $X_{\sigma}:=\phi_X^{-1}(G_\sigma)$.  Define moreover 
$$
D_{p,q}:=\left\{\begin{array}{ll} \bigoplus_{\sigma\in Z_{p}}\Gamma_c(X_\sigma,I^{-q}), & p\geq 0,~-m\leq q\leq 0\\
 \Gamma_c(\widehat{X},I^{-q}), & p=-1,~ -m\leq q\leq 0 \\ 0, & \text{otherwise} \end{array}\right.
$$
We may form the modules $(D_{p,q})$ into a double chain complex via the following differentials.

Horizontal differentials.  For each $p\geq 1$, each simplex $\sigma\in Z_p$ and each $j\in \{0,...,p\}$, let $\sigma^{(j)}\in Z_{p-1}$ denote the $j^\text{th}$ face of $\sigma$, i.e.\ the simplex that has the same vertices as $\sigma$ except the $j^\text{th}$ face is removed.  Note that the type-preserving assumption implies that $G_\sigma\subseteq G_{\sigma^{(j)}}$ for each $j$, whence for each such $j$ and each $q\in \{0,...,m\}$ there is a canonical inclusion $\Gamma_c(X_\sigma,I^q)\subseteq \Gamma_c(X_{\sigma^{(j)}},I^q)$.  Define 
$$
\partial_j:\underbrace{\bigoplus_{\sigma\in Z_p}\Gamma_c(X_\sigma,I^q)}_{=D_{p,-q}}\to \underbrace{\bigoplus_{\sigma\in Z_{p-1}}\Gamma_c(X_\sigma,I^q)}_{=D_{p-1,-q}}
$$
by sending an element $a$ supported in $\Gamma_c(X_\sigma,I^q)$ to its image under the corresponding inclusion $\Gamma_c(X_\sigma,I^q)\subseteq \Gamma_c(X_{\sigma^{(j)}},I^q)$ and extending by linearity.  The horizontal differential is then
$$
\partial^{(h)}:D_{p,q}\to D_{p-1,q}\quad \partial:=(-1)^q\sum_{j=0}^i (-1)^j\partial_j
$$
(the ``$(-1)^q$'' is to ensure that $\partial^{(h)}$ and $\partial^{(v)}$ as defined below anticommute).  For $p=0$, the differential 
$$
\partial:\underbrace{\bigoplus_{\sigma\in Z_0}\Gamma_c(X_\sigma,I^q)}_{=D_{p,q}}\to \underbrace{\Gamma_c(\widehat{X},I^q)}_{=D_{-1,q}}
$$
is defined by summing up the entries via the canonical inclusions $\Gamma_c(X_\sigma,I^q)\to \Gamma_c(\widehat{X},I^q)$, and again multiplying by $(-1)^q$.

Vertical differentials.  The vertical differentials $\partial^{(v)}:D_{p,q}\to D_{p,q-1}$ are those functorially induced by the differentials from the resolution in line \eqref{x res 2} above on each summand $\Gamma_c(X_\sigma,I^{-q})$, or on $\Gamma_c(\widehat{X},I^{-q})$ (note that we have converted cochain differentials to chain differentials by replacing $I^{\bullet}$ with $I^{-\bullet}$).
\end{definition}

We need one last lemma to relate Corollary \ref{! cor} to group homology.  We need a definition\footnote{It is analogous to \cite[page 56]{Schneider:1998aa}, but our conventions are different: to be consistent with the rest of this note, we take the \emph{(abstract) simplicial complex} consisting of all finite \emph{subsets} of the set $V_0$ defined below; in the above reference Schneider uses the \emph{simplicial set} of all finite \emph{sequences} from $V_0$; these two constructions lead to the same geometric realization.} first.

\begin{definition}\label{simp eg}
Let $\text{Fin}(G)$ denote the set of finite subgroups of $G$.  Let $V_0:=\bigsqcup_{H\in \text{Fin}(G)}G/H$, and let $V$ denote the abstract simplicial complex with vertex set $V_0$, and where all finite subsets are simplices.  Note that $V$ has a canonical $G$-action induced by the permutation action of $G$ on the coset space $V_0$.

Let $V^{(b)}$ be the barycentric subdivision (see Remark \ref{gsc rem}, part \eqref{bary rem}) of $V$; we use the barycentric subdivision to ensure that the resulting complex (is type-preserving and) admits a canonical $G$-orientation as in Remark \ref{gsc rem} part \eqref{bary rem}.
\end{definition}

Recall that an $n$-simplex $\sigma$ of $V^{(b)}$ is the same thing as a nested chain
$$
\sigma=(\sigma_0\subsetneq \sigma_1\subsetneq \cdots \subsetneq \sigma_n)
$$
of proper inclusions, where each $\sigma_i$ is a simplex of $V$ (i.e.\ a finite subset of $V_0$).  The next lemma should be compared to \cite[page 67]{Schneider:1998aa}.

\begin{lemma}\label{g mod lem}
Let $k$ be a Pr\"{u}fer domain (compare Remark \ref{k rem}).  Let $Z$ be a proper, $G$-oriented, type preserving $G$-simplicial complex.  Let $X$ be a locally compact Hausdorff $G$-space with finite $c$-$k$-cohomological dimension.  Assume moreover that $k$ contains $n^{-1}$ whenever $n$ is the order of a torsion element of $G$ that fixes a point $x\in X$.  Let 
\begin{equation}\label{v x dc}
\xymatrix{  & 0\ar[d] & 0\ar[d] & \ar[d] 0 &  \cdots  \\ 
0 & D_{-1,0} \ar[d]  \ar[l] & D_{0,0} \ar[d] \ar[l] & D_{1,0}  \ar[d] \ar[l] & \ar[l] \cdots   \\
 & \vdots \ar[d] & \vdots \ar[d] & \vdots \ar[d] & \cdots  \\
0 & D_{-1,-m}  \ar[l] \ar[d] & D_{0,-m} \ar[l] \ar[d] & D_{1,-m} \ar[d]  \ar[l] & \ar[l] \cdots  \\
 & 0  & 0 & 0  & \cdots}
\end{equation}
denote the double cochain complex from Definition \ref{dc2} above. Then for $p\geq 0$, each module $D_{p,q}$ is flat and each row is exact.
\end{lemma}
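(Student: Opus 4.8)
The statement really has two independent halves --- exactness of the rows and flatness of the $D_{p,q}$ --- and I would prove them separately. For the rows, fix $q$ and observe that the $q$-th row is $\Gamma_c(\widehat{X},S^\bullet)$, where $S^\bullet$ is the chain complex of $G$-sheaves on $\widehat{X}$ with $S^p:=\bigoplus_{\sigma\in Z_p}(I^{-q})_{X_\sigma}$ for $p\ge 0$ and $S^{-1}:=I^{-q}$, the differentials being those of Definition \ref{dc2}; here $(I^{-q})_{X_\sigma}$ means $I^{-q}$ restricted to the clopen set $X_\sigma=\phi_X^{-1}(G_\sigma)$ and extended by zero, so each $S^p$ is $c$-soft. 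Since $\Gamma_c$ carries an exact, bounded-below complex of $c$-soft sheaves to an exact complex (break into short exact sequences, which remain short exact under $\Gamma_c$ because $c$-soft sheaves are $\Gamma_c$-acyclic, then splice), it is enough to check that $S^\bullet$ is exact, which can be done on stalks. The stalk of $S^\bullet$ at a point $(x,g)\in\widehat{X}$ is canonically $(I^{-q})_{(x,g)}\otimes_k\widetilde{C}_\bullet(|Z^{\langle g\rangle}|;k)$, the stalk tensored with the augmented simplicial chain complex over $k$ of the $g$-fixed subcomplex $Z^{\langle g\rangle}$ (type-preservingness makes a face of a $g$-fixed simplex $g$-fixed, so the differentials are the simplicial ones). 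The $I^i$ are built in the proof of Lemma \ref{rgcx} with torsion-free --- hence, over the Pr\"{u}fer domain $k$, flat --- stalks, so this complex is exact as soon as $|Z^{\langle g\rangle}|$ is $k$-acyclic; this holds in the case the lemma is applied, $Z=V^{(b)}$ of Definition \ref{simp eg}, since $Z^{\langle g\rangle}$ is then the order complex of the join-semilattice of finite $g$-invariant subsets of the vertex set, which is contractible.

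For flatness, fix $p\ge 0$ and $q$. Grouping $Z_p$ into $G$-orbits and using the identification \eqref{is ind}, $D_{p,q}\cong\bigoplus_{[\sigma]\in Z_p/G}\text{Ind}_{G_\sigma}^G\Gamma_c(X_\sigma,I^{-q})$ as $G$-$k$-modules, with each $G_\sigma$ finite by properness of $Z$. Flatness is preserved by direct sums and, by Lemma \ref{g-mod proj}, $\text{Ind}_{G_\sigma}^G$ sends flat $kG_\sigma$-modules to flat $kG$-modules, so it suffices to prove the following claim $(\star)$: \emph{if a finite group $H$ acts on a locally compact Hausdorff space $W$ so that every point-stabilizer has order invertible in $k$, and $F$ is a $c$-soft $H$-sheaf on $W$ with torsion-free stalks, then $\Gamma_c(W,F)$ is flat over $kH$} --- applied with $H=G_\sigma$, $W=X_\sigma$, $F=I^{-q}|_{X_\sigma}$. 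The stabilizer hypothesis is met here because a point of $X_\sigma\subseteq\widehat{X}$ is a pair $(x,g)$ with $x\in X$, and any $h$ fixing it fixes $x$; thus every element of every stabilizer is a torsion element of $G$ fixing a point of $X$ and so has order invertible in $k$, whence by Cauchy's theorem the whole finite stabilizer has invertible order.

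To prove $(\star)$, filter $W$ by the $H$-invariant open sets $\{w:|H_w|\le o\}$; $c$-softness makes the associated $\Gamma_c$-sequences short exact, so $\Gamma_c(W,F)$ acquires a finite filtration by $kH$-submodules with subquotients $\Gamma_c(W_{[K]},F)$ over single-orbit-type loci. Since an extension of flat $kH$-modules is flat, we reduce to one orbit type $[K]$, where $W_{[K]}\cong H\times_{N_H(K)}W''$ with $W'':=(W_{[K]})^K$, so $\Gamma_c(W_{[K]},F)=\text{Ind}_{N_H(K)}^H\Gamma_c(W'',F|_{W''})$ (again \eqref{is ind}); by Lemma \ref{g-mod proj} it remains to show $\Gamma_c(W'',F|_{W''})$ is flat over $kN_H(K)$. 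On $W''$ the normal subgroup $K$ acts trivially on points while $Q:=N_H(K)/K$ acts freely; descending $F|_{W''}$ along the $Q$-torsor $p\colon W''\to W''/Q=:B$ gives a $c$-soft sheaf $\mathcal G$ of $kK$-modules on $B$ with $\Gamma_c(W'',F|_{W''})=\Gamma_c(B,p_*p^*\mathcal G)$, where $p_*p^*\mathcal G$ is $c$-soft and locally on $B$ equals $\mathcal G\otimes_{\underline{kK}}\underline{kN_H(K)}$. Each stalk of $\mathcal G$ is a $k$-torsion-free, hence (as $|K|$ is invertible) $kK$-flat, module (Lemma \ref{co inv}), so each stalk of $p_*p^*\mathcal G$ is $kN_H(K)$-flat (Lemma \ref{g-mod proj}). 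Thus the claim follows from a sub-lemma $(\star\star)$: \emph{if $R$ is finitely generated free over $k$, $B$ is locally compact Hausdorff, and $\mathcal F$ is a $c$-soft sheaf of $\underline R$-modules on $B$ with $\underline R$-flat stalks, then $\Gamma_c(B,\mathcal F)$ is $R$-flat}. For $(\star\star)$: given an $R$-module $A$ with free resolution $P_\bullet$, one has $P_\bullet\otimes_R\Gamma_c(B,\mathcal F)\cong\Gamma_c(B,\mathcal F\otimes_{\underline R}\underline{P_\bullet})$ (each $\mathcal F\otimes_{\underline R}\underline{P_i}$ is a direct sum of copies of $\mathcal F$ and $\Gamma_c$ commutes with direct sums on $B$); $\mathcal F\otimes_{\underline R}\underline{P_\bullet}$ is a left resolution of $\mathcal F\otimes_{\underline R}\underline A$ by $c$-soft sheaves (using $\underline R$-flatness of $\mathcal F$ and the standard fact that the tensor product of a $c$-soft sheaf with a flat sheaf of modules is $c$-soft); and $\Gamma_c$ applied to such a resolution has vanishing higher homology since $c$-soft sheaves are $\Gamma_c$-acyclic. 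Hence $\text{Tor}^R_{>0}(A,\Gamma_c(B,\mathcal F))=0$ for all $A$.

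The one genuinely substantial step is the free-quotient case inside the proof of $(\star)$ --- the descent along $W''\to W''/Q$ together with the sheaf-theoretic content of $(\star\star)$, i.e.\ that $c$-soft $\otimes$ flat is $c$-soft and that $\Gamma_c$ kills the higher homology of a $c$-soft resolution. The orbit-type filtration, the passages through induced modules, the Cauchy argument for invertibility of stabilizer orders, and the stalkwise identification in the row computation are all routine bookkeeping. (One should also flag that the exactness-of-rows assertion implicitly uses that $Z$ is a model for $\underline{E}G$, which is how the lemma is applied, via $Z=V^{(b)}$.)
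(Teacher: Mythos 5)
Your route is genuinely different from the paper's, but it contains a recurring gap at the point where sheaf-level exactness is converted into exactness of compactly supported sections. In the row-exactness step you have an exact complex of $c$-soft sheaves of the form $\cdots \to S^1\to S^0\to S^{-1}\to 0$, i.e.\ a \emph{left} resolution, unbounded in the direction from which the differentials come; "break into short exact sequences, which remain short exact under $\Gamma_c$ because $c$-soft sheaves are $\Gamma_c$-acyclic, then splice" is not available here. The standard acyclicity argument works for right resolutions $0\to A\to I^0\to I^1\to\cdots$, where the syzygies are inductively forced to be acyclic starting from the bounded end; in your orientation the splicing requires each kernel $\ker(S^p\to S^{p-1})$ to be $\Gamma_c$-acyclic, and kernels of maps between $c$-soft sheaves need not be $c$-soft (e.g.\ $\ker(\Omega^0\to\Omega^1)=\R_{\R}$ on $\R$). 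The same unjustified principle is invoked a second time at the end of your sub-lemma $(\star\star)$, where $\mathcal F\otimes_{\underline R}\underline{P_\bullet}\to\mathcal F\otimes_{\underline R}\underline A$ is again a left resolution by $c$-soft sheaves. Both occurrences are probably repairable -- in your situation the syzygies are, clopen piece by clopen piece, of the form $I^{-q}|\otimes_k M$ with $M$ a torsion-free, hence flat, $k$-module, so Lazard's theorem plus stability of $c$-softness under filtered colimits (or Bredon's machinery of ``$c$-soft and flat'' sheaves) should give their $c$-softness -- but this is a real argument that you would have to supply, not bookkeeping. The paper avoids the issue entirely: it never argues at the sheaf level, but writes down an explicit $k$-linear (not $G$-linear) chain contraction of the row directly on compactly supported sections, using a clopen partition $\{U_\alpha\}_{\alpha\in V_0}$ of $\widehat{X}$ subordinate to the cover by the sets $X_\alpha$ and the combinatorics of chains in $V^{(b)}$.

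For the rest, the comparison is worth recording. Your stalk computation (stalk at $(x,g)$ equal to $(I^{-q})_{(x,g)}\otimes_k\widetilde{C}_\bullet(Z^{\langle g\rangle};k)$, with $Z^{\langle g\rangle}$ the order complex of the directed poset of finite $g$-invariant subsets of $V_0$, hence contractible) is correct, and you rightly flag that exactness of the rows genuinely uses $Z=V^{(b)}$; the paper's own contraction uses exactly the same feature (every finite cyclic subgroup stabilizes a vertex), even though the lemma is stated for general $Z$. For flatness the paper is much shorter: $k$-flatness of $\Gamma_c(X_\sigma,I^{-q})$ from Lemma \ref{rgcx}, then flatness over $kG_\sigma$ by the averaging idempotent of Lemma \ref{co inv}, then induction via Lemma \ref{g-mod proj}; note this appeals to invertibility of $|G_\sigma|$ for the full simplex stabilizer. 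Your orbit-type filtration of $X_\sigma$, the Cauchy argument reducing to point stabilizers (which do consist of torsion elements fixing points of $X$, so their orders are invertible under the stated hypothesis), and the descent along the free $Q$-quotient are a more careful, if heavier, way to reach the same flatness statement -- but the sheaf-theoretic engine $(\star\star)$ that this extra care ultimately rests on is exactly where the gap identified above sits, so as written the flatness half is also incomplete.
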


\begin{proof}
We first show flatness of the $G$-$k$-modules $D_{p,q}$ for $p\geq 0$.  Note first that for each $\sigma\in V_p^{(b)}$ that $\Gamma_c(X_\sigma,I^{-q})$ is flat as a $k$-module by Lemma \ref{rgcx}, part \eqref{rgcx 2}.  It is thus also flat as a $G_\sigma$-$k$ module by Lemma \ref{co inv}.  Each of the modules $\bigoplus_{\sigma\in V_i^{(b)}}\Gamma_c(X_\sigma,I^{-q})$ is a direct sum of $G$-$k$-modules that are induced from flat modules of this form (one for each $G$-orbit in $V_i^{(b)}$), so it is a direct sum of flat modules by Lemma \ref{g-mod proj}, so flat.

Fix $q$.  It remains to show that the $q^\text{th}$ row in line \eqref{v x dc} is exact.  For this it suffices to show that it is exact when considered in the category of $k$-modules (i.e.\ after forgetting the $G$-action).  We will in fact show that it is chain contractible in the category of $k$-modules.

Recall that $V_0=\bigsqcup_{H\in \text{Fin}(G)}G/H$ is the vertex set of $V$.  We have that $(X_\alpha)_{\alpha\in V_0}$ is an open (and closed) cover of $\widehat{X}$ as every finite subgroup of $G$ appears as the stabilizer of some $\alpha\in V_0$.  Let $\{U_\alpha\}_{\alpha\in V_0}$ be a partition of $\widehat{X}$ into clopen sets (with some $U_\alpha$ possibly empty) such that $U_\alpha\subseteq X_\alpha$ for all $\alpha$; to see that such exists, we can for example first find a partition $\{P_\alpha\}_{\alpha\in V_0}$ of $G_{tor}$ such that that each $P_\alpha$ is contained in $G_\alpha$, and then set $U_\alpha:=\phi_X^{-1}(P_\alpha)$.  For each $\alpha$, each $q$ and each clopen set $V$ of $\widehat{X}$, let $s_\alpha:\Gamma_c(V,I^q)\to \Gamma_c(V,I^q)$ be the map that restricts a section to $V\cap U_\alpha$.  

Let now $\sigma=(\sigma_0\subsetneq \sigma_1\subsetneq \cdots \subsetneq \sigma_n)\in V_n^{(b)}$ for some $n$.  Let $a\in \Gamma_c(X_\sigma,I^q)$.  Define $s(a)\in \bigoplus _{\eta\in V_{n+1}^{(b)}}\Gamma_c(X_\eta,I^q)$ by stipulating that the component of $s(a)$ in $k[X_\eta]$ is
$$
\left\{\begin{array}{ll} (-1)^i s_\alpha a, & \eta=\sigma_0\subsetneq \cdots \subsetneq \sigma_{i-1} \subsetneq \sigma_{i-1}\sqcup\{\alpha\}\subsetneq \sigma_i\subsetneq \cdots \subsetneq \sigma_n \text{ for some } \\ &  i\in \{0,...,n+1\} \text{ and } \alpha\in V_0 \\
0, & \text{otherwise}\end{array}\right.
$$
This does define an element of $\bigoplus _{\eta\in V_{n+1}^{(b)}}\Gamma_c(X_\eta,I^q)$: indeed, if $\eta=\sigma_0\subsetneq \cdots \subsetneq \sigma_{i-1} \subsetneq \sigma_{i-1}\sqcup\{\alpha\}\subsetneq \sigma_i\subsetneq \cdots \subsetneq \sigma_n$ for some $i\in \{0,...,n+1\}$ and $\alpha\in V$ as above, then $X_\alpha\cap X_\sigma=X_\eta$, and so in particular $s(a)$ is supported in $\Gamma_c(X_\eta,I^q)$; moreover, $s(a)$ can only be non-zero for finitely many components $\Gamma_c(X_\eta,I^q)$ as only finitely many images $s_\alpha a$ can be non-zero by compact support of $a$.  Now define 
$$
s:\bigoplus _{\sigma\in V_{n}^{(b)}}\Gamma_c(X_\sigma,I^q)\to \bigoplus _{\eta\in V_{n+1}^{(b)}}\Gamma_c(X_\eta,I^q)
$$
by extending by linearity; note that $s$ is a map of $k$ modules, but in general not of $G$-$k$ modules.

To complete the proof, it suffices to show that $s$ is a chain contraction of the $q^{\text{th}}$ row of the double complex in line \eqref{v x dc} (considered in the category of $k$-modules), i.e.\ that $s\partial^{(h)}+\partial^{(h)} s=\text{id}$.  Let $a\in \Gamma_c(X_\sigma,I^q)$ for some $\sigma=(\sigma_0\subsetneq \sigma_1\subsetneq \cdots \subsetneq \sigma_n)\in V_n^{(b)}$ and some $n$.  For $j\in \{0,...,n\}$, write 
$$
T_j:=\{\alpha\in V_0\mid \sigma_j=\sigma_{j-1}\sqcup \{\alpha\}\}
$$ 
(if $j=0$, the condition ``$\sigma_j=\sigma_{j-1}\sqcup \{\alpha\}$'' should be read as ``$\sigma_0=\{\alpha\}$''), so each $T_j$ is either empty or a singleton.  We compute then that the component of $s\partial^{(h)}(a)$ in $\Gamma_c(X_\eta,I^q)$ is 
{\tiny $$
\left\{\begin{array}{ll} (-1)^i(-1)^js_\alpha a, & \eta=\sigma_0\subsetneq \cdots \subsetneq \sigma_{i-1} \subsetneq \sigma_{i-1}\sqcup\{\alpha\}\subsetneq \sigma_i\subsetneq  \cdots \subsetneq \sigma_{j-1}\subsetneq \sigma_{j+1}\subsetneq \cdots \subsetneq \sigma_n \\
\sum_{j=0}^n(-1)^j(-1)^j\sum_{\alpha\in T_j} s_{\alpha}a, & \eta=\sigma \\
(-1)^j(-1)^js_{\alpha} a, & \eta=\sigma_0\subsetneq \cdots \subsetneq \sigma_{j-1} \subsetneq \sigma_{j-1}\sqcup\{\alpha\}\subsetneq \sigma_{j+1} \subsetneq \sigma_n\neq \sigma \\
(-1)^j(-1)^{i-1} s_{\alpha} a, & \eta=\sigma_0\subsetneq \cdots \subsetneq \sigma_{j-1}\subsetneq \sigma_{j+1}\subsetneq \cdots \subsetneq \sigma_{i-1} \subsetneq \sigma_{i-1}\sqcup\{\alpha\}\subsetneq \sigma_i\subsetneq \cdots \subsetneq \sigma_n
 \\
0, & \text{otherwise}\end{array}\right..
$$}
On the other hand, for $i\in \{0,...,n+1\}$ we define 
$$
S_i:=\{\alpha\in V_0\mid \sigma_{i-1}\subsetneq \sigma_{i-1}\sqcup\{\alpha\}\subsetneq \sigma_i\}
$$
(if $i=0$ (respectively $i=n+1$), the condition ``$\sigma_{i-1}\subsetneq \sigma_{i-1}\sqcup\{\alpha\}\subsetneq \sigma_i$'' should be read as ``$\{\alpha\}\subsetneq \sigma_0$'' (respectively, ``$\sigma_n\subsetneq \sigma_n\sqcup\{\alpha\}$'', i.e.\ $\alpha\not\in \sigma_n$)) and then the component of $\partial^{(h)} s(a)$ in $\Gamma_c(X_\eta,I^q)$ is 
{\tiny $$
\left\{\begin{array}{ll} (-1)^i(-1)^{j+1}s_\alpha a, & \eta=\sigma_0\subsetneq \cdots \subsetneq \sigma_{i-1} \subsetneq \sigma_{i-1}\sqcup\{\alpha\}\subsetneq \sigma_i\subsetneq  \cdots \subsetneq \sigma_{j-1}\subsetneq \sigma_{j+1}\subsetneq \cdots \subsetneq \sigma_n \\
\sum_{i=0}^{n+1}(-1)^i(-1)^i \sum_{\alpha\in S_i} s_{\alpha}a & \eta=\sigma \\
(-1)^i(-1)^{i+1}s_{\alpha} a, & \eta=\sigma_0\subsetneq \cdots \subsetneq \sigma_{i-1} \subsetneq \sigma_{i-1}\sqcup\{\alpha\}\subsetneq \sigma_{i+1} \subsetneq \sigma_n\neq \sigma \\
(-1)^j(-1)^{j} s_{\alpha} a, & \eta=\sigma_0\subsetneq \cdots \subsetneq \sigma_{j-1}\subsetneq \sigma_{j+1}\subsetneq \cdots \subsetneq \sigma_{i-1} \subsetneq \sigma_{i-1}\sqcup\{\alpha\}\subsetneq \sigma_i\subsetneq \cdots \subsetneq \sigma_n
 \\
0, & \text{otherwise}\end{array}\right.
$$}
These computations show that $s\partial^{(h)}(a)+\partial^{(h)} s(a)=a$ for such $a$; and as such $a$ generate $\bigoplus _{\sigma\in V_{n}^{(b)}}\Gamma_c(X_\sigma,I^q)$ as a $k$-module, we are done.
\end{proof}

\begin{proof}[Proof of Proposition \ref{bcr and gh}]
With notation as in Lemma \ref{g mod lem}, we have $|V^{(b)}|=|V|$.  Moreover, $|V|$ is a well-known simplicial model for $\underline{E}G$: indeed, the infinite join construction for $\underline{E}G$ from \cite[Appendix 1]{Baum:1994pr} is exactly $|V|$ in the case that $G$ is discrete.  Hence $H^*_{G,k,!}(\underline{E}G,X)=H^*_{G,k,!}(|V^{(b)}|,X)$.  The group $H^*_{G,k,!}(|V^{(b)}|,X)$ can be computed as the homology of the complex from Corollary \ref{! cor}.  

On the other hand, $H_*(G,\Gamma_c(I^{-\bullet}))$ can be computed by replacing $\Gamma_c(\widehat{X},I^{-\bullet})$ with a quasi-isomorphic bounded below chain complex of flat modules, taking coinvariants, and then taking homology: indeed, in the language of \cite[Definition 10.5.4]{Weibel:1995ty}, flat $G$-$k$ modules are acyclic for the coinvariants functor, and we may thus apply \cite[Generalized Existence Theorem 10.5.9]{Weibel:1995ty}\footnote{This is stated for total right derived functors, but there is an analogous version for total left derived functors.} to compute the total left derived functor of the coinvariants functor.  To replace $\Gamma_c(\widehat{X},I^{-\bullet})$ with a quasi-isomorphic bounded below chain complex of flat modules, note that Lemma \ref{g mod lem} implies that the double complex
\begin{equation}\label{v x dc2}
\xymatrix{  & 0\ar[d] & 0\ar[d] & \ar[d] 0 &  \cdots  \\ 
0 & D_{0,0} \ar[d]  \ar[l] & D_{1,0} \ar[d] \ar[l] & D_{2,0}  \ar[d] \ar[l] & \ar[l] \cdots   \\
 & \vdots \ar[d] & \vdots \ar[d] & \vdots \ar[d] & \cdots  \\
0 & D_{0,-m}  \ar[l] \ar[d] & D_{1,-m} \ar[l] \ar[d] & D_{2,-m} \ar[d]  \ar[l] & \ar[l] \cdots  \\
 & 0  & 0 & 0  & \cdots}
\end{equation}
consists of flat modules; moreover, as the augmentation of this double complex in line \eqref{v x dc} has exact rows (the proof of) the so-called acyclic assembly lemma \cite[2.7.3]{Weibel:1995ty} shows that the (direct sum) total complex of the double complex in line \eqref{v x dc2} is quasi-isomorphic to the first column $D_{-1,-m}\leftarrow \cdots \leftarrow D_{-1,0}$ of the double complex in line \eqref{v x dc}; this in turn is exactly the complex $\Gamma_c(\widehat{X},I^{-\bullet})$ we are interested in.  As direct sums of flat modules are flat, the (direct sum) total complex is therefore a bounded below chain complex of flat modules that is quasi-isomorphic to $\Gamma_c(\widehat{X},I^{-\bullet})$  as required.  

To summarize the above paragraph, we may compute $H_*(G,\Gamma_c(I^{-\bullet}))$ by starting with the (direct sum) total complex of the double complex in line \eqref{v x dc2}, taking coinvariants, and then taking homology.  Up to replacing the index ``$n$'' by ``$-n$'', this gives exactly the same complex as the total complex from Corollary \ref{! cor}, however, so we are done.  
\end{proof}


\begin{thebibliography}{10}
\bibliographystyle{abbrv}

\bibitem{Baum:1988qv}
P.~Baum and A.~Connes.
\newblock Chern character for discrete groups.
\newblock In {\em A f\^{e}te of topology}, pages 163--232. Academic Press,
  1988.

\bibitem{Baum:1994pr}
P.~Baum, A.~Connes, and N.~Higson.
\newblock Classifying space for proper actions and ${K}$-theory of group
  ${C}^*$-algebras.
\newblock {\em Contemporary Mathematics}, 167:241--291, 1994.

\bibitem{Baum:2009hq}
P.~Baum, N.~Higson, and T.~Schick.
\newblock A geometric description of equivariant ${K}$-homology for proper
  actions.
\newblock In {\em Quanta of maths}, volume~11 of {\em Clay Math. Proc.}, pages
  1--22. American Mathematical Society, Providence, RI, 2010.

\bibitem{BaumOyonoSchickWalter2010aa}
P.~Baum, H.~Oyono-Oyono, T.~Schick, and M.~Walter.
\newblock Equivariant geometric {$K$}-homology for compact {L}ie group actions.
\newblock {\em Abh. Math. Semin. Univ. Hambg.}, 80(2):149--173, 2010.

\bibitem{Baum:2002aa}
P.~Baum and P.~Schneider.
\newblock Equivariant-bivariant {C}hern character for profinite groups.
\newblock {\em K-theory}, 25(4):313--353, 2002.

\bibitem{BonickeDellAieraGabeWillett2023aa}
C.~B\"{o}nicke, C.~Dell'Aiera, J.~Gabe, and R.~Willett.
\newblock Dynamic asymptotic dimension and {M}atui's {HK} conjecture.
\newblock {\em Proc. Lond. Math. Soc. (3)}, 126(4):1182--1253, 2023.

\bibitem{Bonicke:2022vk}
C.~B\"{o}nicke and V.~Proietti.
\newblock Categorical approach to the {B}aum-{C}onnes conjecture for \'{e}tale
  groupoids.
\newblock Published online, J. Inst. Math. Jussieu,
  https://doi.org/10.1017/S1474748023000531, 2023.

\bibitem{Bott:1982aa}
R.~Bott and L.~Tu.
\newblock {\em Differential forms in algebraic topology}.
\newblock Springer-Verlag, 1982.

\bibitem{Bredon:1997aa}
G.~E. Bredon.
\newblock {\em Sheaf theory}, volume 170 of {\em Graduate Texts in
  Mathematics}.
\newblock Springer, second edition, 1997.

\bibitem{Brow:1982rt}
K.~S. Brown.
\newblock {\em Cohomology of groups}.
\newblock Number~87 in Graduate Texts in Mathematics. Springer, 1982.

\bibitem{Cartan:1956aa}
H.~Cartan and S.~Eilenberg.
\newblock {\em Homological algebra}.
\newblock Princeton University Press, 1956.

\bibitem{CrainicMoerdijk2000aa}
M.~Crainic and I.~Moerdijk.
\newblock A homology theory for \'{e}tale groupoids.
\newblock {\em J. Reine Angew. Math.}, 521:25--46, 2000.

\bibitem{Deeley2023aa}
R.~J. Deeley.
\newblock A counterexample to the {HK}-conjecture that is principal.
\newblock {\em Ergodic Theory Dynam. Systems}, 43(6):1829--1846, 2023.

\bibitem{Emerson:2006uq}
H.~Emerson and R.~Meyer.
\newblock Euler characteristics and {G}ysin sequences for group actions on
  boundaries.
\newblock {\em Math. Ann.}, 334(4):853--904, April 2006.

\bibitem{EmersonMeyer2010aa}
H.~Emerson and R.~Meyer.
\newblock Bivariant {$K$}-theory via correspondences.
\newblock {\em Adv. Math.}, 225(5):2883--2919, 2010.

\bibitem{Engelking:1978aa}
R.~Engelking.
\newblock {\em Dimension Theory}.
\newblock North Holland Publishing Company, 1978.

\bibitem{Godement:1958aa}
R.~Godement.
\newblock {\em Topologie alg\'{e}brique et th\'{e}orie des faisceaux}.
\newblock Hermann, 1958.

\bibitem{Grothendieck:1957aa}
A.~Grothendieck.
\newblock Sue quelques points d'alg\`{e}bre homologique.
\newblock {\em Tohoku Math. J.}, 9(2):119--221, 1957.

\bibitem{Hatcher:2002ud}
A.~Hatcher.
\newblock {\em Algebraic Topology}.
\newblock Cambridge University Press, 2002.

\bibitem{Higson:2001eb}
N.~Higson and G.~Kasparov.
\newblock ${E}$-theory and ${KK}$-theory for groups which act properly and
  isometrically on {H}ilbert space.
\newblock {\em Invent. Math.}, 144:23--74, 2001.

\bibitem{Jakob1998aa}
M.~Jakob.
\newblock A bordism-type description of homology.
\newblock {\em Manuscripta Math.}, 96(1):67--80, 1998.

\bibitem{Jakob2000aa}
M.~Jakob.
\newblock An alternative approach to homology.
\newblock In {\em Une d\'{e}gustation topologique [{T}opological morsels]:
  homotopy theory in the {S}wiss {A}lps ({A}rolla, 1999)}, volume 265 of {\em
  Contemp. Math.}, pages 87--97. Amer. Math. Soc., Providence, RI, 2000.

\bibitem{Kashiwara:1990aa}
M.~Kashiwara and P.~Schapira.
\newblock {\em Sheaves on manifolds}.
\newblock Springer-Verlag, 1990.

\bibitem{Kasparov:1988dw}
G.~Kasparov.
\newblock Equivariant ${KK}$-theory and the {N}ovikov conjecture.
\newblock {\em Invent. Math.}, 91(1):147--201, 1988.

\bibitem{Kasparov:2003cf}
G.~Kasparov and G.~Skandalis.
\newblock Groups acting properly on ``bolic" spaces and the {N}ovikov
  conjecture.
\newblock {\em Ann. of Math.}, 158(1):165--206, June 2003.

\bibitem{Kranz:2021aa}
J.~Kranz.
\newblock An identification of the {B}aum-{C}onnes and {D}avis-{L}\"{u}ck
  assembly maps.
\newblock {\em M\"{u}nster J. Math.}, 14:509--536, 2021.

\bibitem{Lafforgue:2009ss}
V.~Lafforgue.
\newblock La conjecture de {B}aum-{C}onnes \`{a} coefficients pour les groupes
  hyperboliques.
\newblock {\em J. Noncommut. Geom.}, 6(1):1--197, 2012.

\bibitem{Lazard:1969aa}
D.~Lazard.
\newblock Autour de la platitude.
\newblock {\em Bull. Soc. Math. France}, 97:81--128, 1969.

\bibitem{Luck:2002aa}
W.~L\"{u}ck.
\newblock Chern characters for proper equivariant homology theories and
  applications to ${K}$- and ${L}$-theory.
\newblock {\em J. Reine Angew. Math.}, 543:193--234, 2002.

\bibitem{Luck:2002lk}
W.~L\"{u}ck.
\newblock The relationship between the {B}aum-{C}onnes conjecture and the trace
  conjecture.
\newblock {\em Invent. Math.}, 149:123--152, 2002.

\bibitem{Matui2016aa}
H.~Matui.
\newblock \'{E}tale groupoids arising from products of shifts of finite type.
\newblock {\em Adv. Math.}, 303:502--548, 2016.

\bibitem{Matui:2017hq}
H.~Matui.
\newblock Homology and topological full groups of \'{e}tale groupoids on
  totally disconnected spaces.
\newblock In {\em Operator algebras and applications - the {A}bel symposium},
  volume~12, pages 203--220, 2017.

\bibitem{Meyer:2006fr}
R.~Meyer and R.~Nest.
\newblock The {B}aum-{C}onnes conjecture via localisation of categories.
\newblock {\em Topology}, 45(2):209--259, March 2006.

\bibitem{Mislin:2003lr}
G.~Mislin and A.~Valette.
\newblock {\em Proper Group Actions and The {B}aum-{C}onnes conjecture}.
\newblock Birkh\"{a}user, 2003.

\bibitem{Mitchener:2004aa}
P.~Mitchener.
\newblock ${C^*}$-categories, groupoid actions, equivriant ${KK}$-theory, and
  the {B}aum-{C}onnes conjecture.
\newblock {\em J. Funct. Anal.}, 214(1):1--39, 2004.

\bibitem{OrtegaSanchez2022aa}
E.~Ortega and A.~Sanchez.
\newblock The homology of the groupoid of the self-similar infinite dihedral
  group.
\newblock {\em Math. Scand.}, 128(2):255--277, 2022.

\bibitem{OrtegaScarparo2023aa}
E.~Ortega and E.~Scarparo.
\newblock Almost finiteness and homology of certain non-free actions.
\newblock {\em Groups Geom. Dyn.}, 17(1):77--90, 2023.

\bibitem{Proietti:2021wz}
V.~Proietti and M.~Yamashita.
\newblock Homology and ${K}$-theory of dynamical systems {I}. torsion-free
  ample groupoids.
\newblock {\em Ergodic Theory Dynam. Systems}, 42:2630--2660, 2021.

\bibitem{ProiettiYasashita2023aa}
V.~Proietti and M.~Yamashita.
\newblock Homology and {$K$}-theory of dynamical systems {II}. {S}male spaces
  with totally disconnected transversal.
\newblock {\em J. Noncommut. Geom.}, 17(3):957--998, 2023.

\bibitem{Proietti:2023aa}
V.~Proietti and M.~Yamashita.
\newblock Homology and ${K}$-theory of dynamical systems {IV}. further
  structural results on groupoid homology.
\newblock arXiv:2310.09928v1, 2023.

\bibitem{Raven:2004aa}
J.~Raven.
\newblock {\em An equivariant bivariant {C}hern character}.
\newblock PhD thesis, The Pennsylvania State University, 2004.

\bibitem{Scarparo:2020aa}
E.~Scarparo.
\newblock Homology of odometers.
\newblock {\em Ergodic Theory Dynam. Systems}, 40(9):2541--2551, 2020.

\bibitem{Schneider:1998aa}
P.~Schneider.
\newblock Equivariant homology for totally disconnected groups.
\newblock {\em J. Algebra}, 203:50--68, 1998.

\bibitem{Sims:2017aa}
A.~Sims.
\newblock Hausdorff \'{e}tale groupoids and their ${C^*}$-algebras.
\newblock In F.~Perera, editor, {\em Operator algebras and dynamics: groupoids,
  crossed products, and {R}okhlin dimension}, Advanced Courses in Mathematics.
  {CRM} {B}arcelona. Birkh\"{a}user, 2020.

\bibitem{Steen:1970aa}
L.~A. Steen and J.~A. Seebach.
\newblock {\em Counterexamples in Topology}.
\newblock Holt, Rinehart and Winston, 1970.

\bibitem{Swan:1964aa}
R.~Swan.
\newblock {\em The Theory of Sheaves}.
\newblock University of Chicago press, 1964.

\bibitem{Tu:1999bq}
J.-L. Tu.
\newblock La conjecture de {B}aum-{C}onnes pour les feuilletages moyennables.
\newblock {\em ${K}$-theory}, 17:215--264, 1999.

\bibitem{Voigt:207aa}
C.~Voigt.
\newblock Equivariant local cyclic homology and the equivariant
  {C}hern-{C}onnes character.
\newblock {\em Doc. Math.}, 12:313--359, 2007.

\bibitem{Voigt:2008aa}
C.~Voigt.
\newblock A new description of equivariant cohomology for totally disconnected
  groups.
\newblock {\em J. K-theory}, 1(3):431--472, 2008.

\bibitem{Voigt:2009aa}
C.~Voigt.
\newblock Chern character for totally disconnected groups.
\newblock {\em Math. Ann.}, 343:507--540, 2009.

\bibitem{Weibel:1995ty}
C.~Weibel.
\newblock {\em An Introduction to Homological Algebra}, volume~38 of {\em
  Cambridge studies in advanced mathematics}.
\newblock Cambridge University Press, 1995.

\end{thebibliography}
\end{document}